\DeclareMathOperator{\thepace}{pace}
\newcommand{\LBpace}[1][]{\thepace^{LB}_{#1}}
\DeclareMathOperator{\npace}{npace}
\newcommand{\NLBpace}[1][]{\npace^{LB}_{#1}}
\definecolor{darkviolet}{rgb}{0.58, 0.0, 0.83}
\newcommand{\ndelta}[2]{{#1}^{#2}}
\newcommand{\x}{\boldsymbol{x}}
\newcommand{\RLTlooseB}{\texttt{RLT-looseB}\xspace}
\newcommand{\RLTtightB}{\texttt{RLT-tightB}\xspace}
\providecommand{\card}[1]{\lvert#1\rvert}
\providecommand{\co}[1]{\bar{#1}}
\newcommand{\bfc}[1]{U_{#1}L_{\co{T}}}
\newcommand{\bfcfull}[2]{U_{#1}L_{#2}}
\newcommand{\solver}[1]{\texttt{#1}}
\setlist[itemize,3]{label=$\diamond$}
\newenvironment{shortitem}[1][]{\begin{itemize}[topsep=0pt, itemsep=0pt, parsep=0.5pt, leftmargin=*, #1]}{\end{itemize}}
\newcommand{\R}{\mathbb R}
\newcommand{\N}{\mathbb N}
\newcommand{\bOmega}{\bar{\Omega}}
\newcommand{\bu}{\bar{u}}
\newcommand{\bl}{\bar{l}}
\newcommand{\itercounter}{\tau}
\newcommand{\cola}{Q}
\newcommand{\xb}{\bar{\boldsymbol{x}}}
\newcommand{\Xb}{\bar{\boldsymbol{X}}}
\newcommand{\zbar}{\bar z}
\renewcommand{\stop}{\textsc{End}}
\newtheorem{theor}{Theorem}
\newtheorem{cor}{Corollary}
\newtheorem{lem}{Lemma}
\title{Bound tightening in lifted formulations: (sub)solver-dependent impact on performance in RLT-based algorithms\footnote{No part of this paper has been submitted or published elsewhere.}}
\author[1,2]{Julio González-Díaz\thanks{Corresponding author: julio.gonzalez@usc.es}}
\author[3]{Brais González-Rodríguez}
\author[1]{Ignacio Gómez-Casares}
\affil[1]{Department of Statistics, Mathematical Analysis and Optimization and MODESTYA Research Group, University of Santiago de Compostela, 15782 Santiago de Compostela, Spain}
\affil[2]{CITMAga (Galician Center for Mathematical Research and Technology), 15782 Santiago de Compostela, Spain}
\affil[3]{Department of Statistics and Operational Research and SiDOR Research Group, University of Vigo.}
\date{\today}
\begin{document}

\maketitle

\begin{abstract}
In this paper we explore a relevant aspect of the interplay between two core elements of global optimization algorithms for nonconvex nonlinear programming problems, which we believe has been overlooked by past literature. The first one is the reformulation of the original problem, which requires the introduction of auxiliary variables with the goal of defining convex relaxations that can be solved both reliably and efficiently on a node-by-node basis. The second one, bound tightening or, more generally, domain reduction, allows to reduce the search space to be explored by the branch-and-bound algorithm. We are interested in the performance implications of propagating the bounds of the original variables to the auxiliary ones in the lifted space: does this propagation reduce the overall size of the tree? does it improve the efficiency at solving the node relaxations? To better understand the above interplay, we focus on the reformulation-linearization technique for polynomial optimization. In this setting we are able to obtain a theoretical result on the implicit bounds of the auxiliary variables in the RLT relaxations, which sets the stage for the ensuing computational study, whose goal is to assess to what extent the performance of an RLT-based algorithm may be affected by the decision to explicitly propagate the bounds on the original variables to the auxiliary ones.
\end{abstract}

\textbf{Keywords.} Domain Reduction, Lifted Formulations, Global Optimization, Polynomial Optimization, Reformulation-Linearization Technique, Machine Learning.

\medskip

\textbf{MSC Codes.} 90C26, 90C23, 90C30.

\section{Introduction}

In recent years there has been a rise in the research in global optimization and, in particular, in the development of global optimization solvers for problems that are both nonlinear and nonconvex, for which local optimality conditions do not guarantee good quality solutions. State-of-the-art commercial solvers for MILP problems such as \solver{Gurobi} \cite{gurobi} and \solver{Xpress} \cite{xpress} can now solve MINLP problems to certified global optimality. \solver{Octeract} \cite{octeract} is another recent global solver, which has joined the pool of well established ones such as \solver{ANTIGONE} \cite{antigone}, \solver{BARON} \cite{baron2018}, \solver{Couenne} \cite{Belotti2009}, \solver{LindoGlobal} \cite{lindo}, and \solver{SCIP} \cite{scip}. 

The single most important element of the branch-and-bound algorithms behind the above solvers is probably the approach taken to define the linear and/or convex relaxations of the underlying nonconvex problem. These relaxations are then efficiently solved at the nodes of the tree, along which finer and finer partitions of the feasible region are defined. Devising relaxation strategies that are applicable to general MINLP problems is a challenging task, which has led to a rich literature with a wide variety of mathematical developments (refer to \cite{tawarmalani2013} for a comprehensive coverage). In a series of seminal papers (see, for instance, \cite{Ryoo:1996}, \cite{baron1996}, \cite{tawarmalani2004}, \cite{tawarmalani2005}), the research team behind \solver{BARON} demonstrated the potential of factorable programming \cite{mccormick1976} to produce linear relaxations whose solution is both computationally efficient and numerically robust.\footnote{Recently, in \cite{baron2018}, the same research team has developed, and integrated in \solver{BARON}, a framework that combines polyhedral and convex nonlinear relaxations. Importantly, they devised fail-safe techniques to control for numerical issues in the solution of nonlinear relaxations and revert to the more robust polyhedral based relaxations when needed.} Since its adoption in \cite{Ryoo:1996} and \cite{baron1996}, factorable programming reformulations and factorable programming relaxations (usually polyhedral) have been at the core of essentially any general MINLP global solver.

The growing interest in global optimization is also present in the development of specialized algorithms for particular classes on nonlinear problems. One of the most important such classes, because of the richness of its applications, is polynomial programming, where \solver{RLT-POS} \cite{Dalkiran2016} and, more recently, \solver{RAPOSa} \cite{Gonzalez-Rodriguez:2023} are two specialized solvers that ensure global optimality. These solvers build upon the reformulation-linearization technique, RLT, introduced in \cite{Sherali1992} for polynomial optimization problems. In this technique, an auxiliary variable is associated to each monomial present in the original problem, which naturally leads to linear relaxations that are then embedded into the branch-and-bound scheme.

Importantly, both of the aforementioned approaches, factorable programming and RLT, require to introduce a large number of auxiliary variables to construct suitable reformulations of the original problem and, hence, the ensuing relaxations are defined in lifted spaces.  Since these lifted spaces are typically of a much larger dimension than the original problem, also the search space for the branch-and-bound algorithm may become much larger. This observation motivates another fundamental element behind the efficient design of global optimization algorithms: domain reduction and, more specifically, bound tightening. Bound-tightening \cites{Ryoo:1996,tawarmalani2004,Belotti2009,Puranik:2017} refers to a family of methods designed to reduce the search space by adjusting the bounds of the variables. In \cite{Puranik:2017}, the authors provide a comprehensive numerical study highlighting the significant impact of domain reduction techniques, especially bound tightening, on several leading global solvers for MINLP problems. More recently, \cite{Gonzalez-Rodriguez:2023} and \cite{gomezcasares2024} report a comparable impact for polynomial optimization problems using solver \solver{RAPOSa}. 

In this paper we are concerned with an apparently minor aspect of bound tightening in lifted spaces which, to the best of our knowledge, has not been studied by past literature: potential trade-offs when tightening the bounds of the auxiliary variables. Typically, for solvers based on factorable reformulations, the convergence of the resulting branch-and-bound scheme can be guaranteed by branching just on the original variables. \solver{Xpress} \cite{xpress}, for instance, gives priority to branching on the original variables, and only resorts to branching on variables in the lifted space in particular cases where they are anticipated to be particularly effective, whereas \solver{Couenne} \cite{Belotti2009} makes no distinction between original and auxiliary variables once the branch-and-bound starts. Given the tight connections between the original and the auxiliary variables in the reformulated problem, one might expect that the bounds on the original variables implicitly propagate to the auxiliary ones and so one might wonder to what extent it is necessary to explicitly define and subsequently tighten the bounds of the auxiliary variables. On the one hand, the extra tightening should result in smaller trees but, on the other, it is not clear what is the impact that making all these bounds explicit may have on the performance of the auxiliary solver used at the different nodes of the tree. In order to analyze this trade-off, we focus on the RLT technique for polynomial optimization. There are two main reasons for this choice. First, in the RLT technique, only the original variables are used for branching and, therefore, we are guaranteed that convergence does not get compromised if the bounds on the auxiliary variables are not tightened or even passed to the linear solver. Second, the linear RLT relaxations solved at the nodes of the tree are tightened with the so called bound-factor constraints. In this context, we formally prove that, when the bound-factor constraints are present, the bounds on the original variables directly translate into easily computable bounds for the auxiliary ones. This implies, in particular, that we are guaranteed that tightening explicitly the auxiliary variables with these bounds should not impact the size of the resulting branch-and-bound tree (beyond discrepancies due to different branching decisions after solving relaxations with multiple optimal solutions).

Driven by the above discussion, the main goal of this paper is to gain understanding on the impact of tightening the bounds of the auxiliary variables in spatial branch-and-bound algorithms. In order to do so, we conduct a relatively simple numerical analysis. We compare the performance of different configurations of an RLT-based solver: depending on how the bounds of the auxiliary variables are handled, depending on whether or not bound-tightening techniques are enabled, and also depending on whether or not the test sets are restricted to continuous problems. We find that the impact of explicitly incorporating the bounds of the auxiliary variables is significant and, somewhat surprisingly, it can go in any direction with no clear pattern behind it. More precisely, we find that:
\begin{itemize}
	\item The direction and the magnitude of the impact on performance varies significantly across the auxiliary linear solvers used to solve the node relaxations.
	\item The direction of the impact varies across test sets. Even after restricting attention to a specific linear solver, the direction of the impact may vary across test sets.
	\item The direction and the magnitude of the impact also depend on whether or not bound tightening techniques were enabled in the RLT-based solver and on whether or not the problems under consideration are have integer variables. Further, the direction of the impact may again vary across solvers and test sets.
\end{itemize}

The aforementioned impact is significant for different measures of performance such as solve time, gap, number of nodes of the tree, and solve time of the linear relaxations. Despite our efforts and the extended analysis reported in the manuscript, we have not been able to find any consistent pattern underlying this behavior, which clearly calls for further research on this issue. To advance this line of inquiry, we turn to statistical learning in an effort to shed light on the potential existence of underlying patterns. Should such patterns be present, they might be used to predict, on a given instance, what approach regarding the bounds of the auxiliary variables would perform better. To this end, we build upon the general learning framework developed in \cite{ghaddar2023}, with the evidence suggesting that the observed variability in the results cannot be attributed solely to randomness. In particular, the improvements obtained by following the trained model suggest that further progress could be achieved with learning techniques tailored to this setting. These techniques should be designed not only to enhance performance but, more importantly, to improve understanding of the underlying patterns.

In summary, the contribution of this paper lies in raising awareness of the potentially large impact of a seemingly minor decision: whether or not to specify bounds for auxiliary variables. Crucially, this impact may depend heavily on the auxiliary solver used to handle the relaxations. Gaining a deeper understanding of the mechanisms at play may open the door to substantial performance gains of RLT-based algorithms and, more broadly, of global optimization solvers that rely on lifted formulations.

The rest of the paper is structured as follows. In Section~\ref{sec:framework} we present the general polynomial optimization framework, including the RLT-technique and some considerations regarding bound tightening. In Section~\ref{sec:proof} we present the theoretical result for the (implicit) bounds on the auxiliary variables of the RLT relaxations. In Section~\ref{sec:testing_learning} we present the testing and the learning environments. Section~\ref{sec:computational} contain the core computational experiments. We conclude in Section~\ref{sec:conclusions}.

\section{Framework for the analysis}\label{sec:framework}
\subsection{Spatial branch-and-bound algorithm}
We frame our analysis in the context of polynomial optimization problems, relying on the reformulation–linearization technique \cite{Sherali1992}. We consider (continuous) polynomial optimization problems given by
\begin{equation}
\begin{aligned}
\text{minimize} & \quad \phi_0(\x)\\
\text{subject to}  & \quad \phi_r(\x)\geq \beta_r, & r=1,2,\ldots, m_1 \\
& \quad \phi_r(\x)=\beta_r, & r=m_1+1,\ldots,m\\
& \quad \x\in\Omega \subset \mathbb{R}^{\card{N}}\text{,}
\end{aligned}
\label{eq:PO}
\tag*{$PP(\Omega)$}
\end{equation}
where $N$ denotes the set of variables, each $\phi_r(\mathbf{x})$ is a polynomial of degree $\delta_r \in \mathbb{N}$, and the set $\Omega = \{ \x \in \mathbb{R}^{\card{N}}: 0 \leq l_j \leq x_j \leq u_j < \infty, \, \forall j \in N \} \subset \mathbb{R}^{\card{N}}$ is a hyperrectangle containing the feasible region. The degree of the problem is given by, $\delta=\max_{r \in \{0,\ldots,R\}} \delta_r$.

Next, in order to formally define the linear relaxation of~\ref{eq:PO}, it is convenient to introduce the notion of multiset. Given a finite set $N$ and a map $\mu: N \to \mathbb{N} \cup \{ 0 \}$, a multiset is a pair $(N,\mu)$ where, for each $j\in N$, $\mu(j)$ denotes $j$'s multiplicity. The cardinality of a multiset is given by $\lvert (N,\mu) \rvert = \sum_{j \in N} \mu(j)$. Consider two multisets, $(N, \mu_1)$ and $(N, \mu_2) = (N, \mu)$. We define their sum as the multiset $(N, \mu)=(N, \mu_1) + (N, \mu_2)$ where, for each $j\in N$, $\mu(j) = \mu_1(j) + \mu_2(j)$ and we define their union as the set of all $j\in N$ for which $\mu(j)\neq 0$. Finally, given a set $N$ and $\lambda\in \mathbb N$, we denote by $\ndelta{N}{\lambda}$ the collection of multisets $\{ (N,\mu) \text{ s.t. } \lvert (N, \mu) \rvert = \lambda \}$. 

The \textit{reformulation-linearization technique}, RLT, solves problem~\ref{eq:PO} through a reformulation that can readily relaxed as a linear problem, which is then embedded in a branch-and-bound scheme. This is achieved by replacing each monomial of degree greater than one with an auxiliary variable. For example, variable $X_{1224}$ would replace monomial $x_1x_2x_2x_4$. More generally, these auxiliary variables, called RLT variables, are defined, for a given multiset $J$ with $\lvert J \rvert>1$, by
\begin{equation}
  X_J = \prod_{j \in J}x_j, 
  \label{eq:RLTidentity}
\end{equation}
Since the cardinality of a multiset coincides with the degree of the associated monomial, we use cardinality and degree interchangeably. In order to get tighter relaxations and ensure convergence of the resulting algorithm, additional constraints, called bound-factor constraints, are also added {(and linearized)}. For each $\lambda\in \mathbb N$ and each pair of multisets $J_1$ and $J_2$ such that $J_1 + J_2 \in \ndelta{N}{\lambda}$, the corresponding bound-factor constraint has degree $\lambda$ and is given by
\begin{equation}
F_{\Omega}^{\lambda}(J_1,J_2)=\prod_{j\in J_1}{(x_j-l_j)}\prod_{j\in J_2}{(u_j-x_j)}\geq 0.
\label{eq:BFC}
\end{equation}
Now, let $[\cdot]_L$ denote the linearization of a polynomial obtained by replacing its monomials with their associated RLT variables. Then, the linear relaxation of~\ref{eq:PO} is defined as
\begin{equation}
\begin{aligned}
\text{minimize} & \quad [\phi_0(\x)]_L & \\
\text{subject to}  & \quad [\phi_r(\x)]_L \geq \beta_r, & r=1,2,\ldots, m_1 \\
& \quad [\phi_r(\x)]_L =\beta_r, & r=m_1+1,\ldots,m\\
& \quad [F_{\Omega}^{\delta}(J_1,J_2)]_L \geq 0, & J_1 + J_2 \in \ndelta{N}{\delta}\\
& \quad \x\in\Omega \subset \mathbb{R}^{\card{N}}\text{.}\\
\end{aligned}
\label{eq:LP}
\tag*{$LP(\Omega)$}
\end{equation}

In Figure~\ref{fig:RLTalg} we present the general scheme of an RLT-based algorithm, which proceeds by defining a series of linear relaxations, $LP(\Omega^k)$, of polynomial optimization problems analogous to~\ref{eq:PO}, but with respect to different hyperrectangles $\Omega^k$. For each $j \in N$ and each $k \in \N$, let $l^k_j$ and $u^k_j$ denote the lower and upper bounds of variable $x_j$ in $LP(\Omega^k)$. Note that the bound-factor constraints of $LP(\Omega^k)$ depend on the $l^k_j$ and $u^k_j$ bounds and, therefore, they vary across nodes.  Once a linear relaxation $LP(\Omega^k)$ has been solved and an optimal solution $(\xb, \Xb)$ is available, $\theta_j^k$ denotes a measure of the violation of the RLT-defining identities, Eq.~\eqref{eq:RLTidentity}, in which variable $j \in N$ is involved.\footnote{Refer to Section~5.4 in \cite{Gonzalez-Rodriguez:2023} for additional details.}

\begin{figure}[!htbp]
\centering
\begin{tcolorbox}
\small
\textbf{Initialization.} Let $LB\vcentcolon=-\infty$, $UB\vcentcolon=+\infty$, and $\x^{best}\vcentcolon=\emptyset$. Let $\itercounter\vcentcolon=1$, $\Omega^1 \vcentcolon= \Omega$, $\cola\vcentcolon=\{1\}$, and $LB^1=-\infty$.

\textbf{Stage 1 \emph{(main)}.} Choose $k\in \cola$ such that $LB^k=\min_{s\in \cola} LB^s$. Let $\cola \vcentcolon= \cola \backslash \{ k\}$. Solve problem $LP(\Omega^k)$.
	\begin{shortitem}
		\item If $LP(\Omega^k)$ is infeasible, go to {\bf Stage~2}.
		\item If $LP(\Omega^k)$ is feasible, let $\zbar^k$ be the optimal value and let $(\xb^k, \Xb^k)$ be an optimal solution.
	\begin{shortitem}
		\item If $\zbar^k<UB$ and $\theta_j^k = 0$ for all $j \in N$:
		\begin{shortitem}
			\item \emph{Update upper bound.} $UB\vcentcolon=\zbar^k$. Let $\x^{best}\vcentcolon=\xb^k$.
			\item \emph{Prune.} Remove all $s\in \cola$ such that $LB^s \geq UB$. Go to {\bf Stage~2}.
		\end{shortitem} 
		\item If $\zbar^k < UB$ and $\theta_j^k > 0$ for some $j \in N$:
		\begin{shortitem}
			\item \emph{Branch.} Choose $p \in N$ such that $\theta_p^k = \max_{j \in N} \theta_j^k$. Branch at $\beta \vcentcolon= \bar x^k_p$, by defining $\Omega^{\itercounter+1} \vcentcolon= \Omega^k \cap \{ \x \in \R^{\vert N \vert} \text{ s.t. } l^k_p \leq x_p \leq \beta \}$ and $\Omega^{\itercounter+2} \vcentcolon= \Omega^k \cap \{ \x \in \R^{\vert N \vert} \text{ s.t. } \beta \leq x_p \leq u^k_p \}$.
	\item Update queue $\cola \vcentcolon= \cola \cup \{ \itercounter+1, \itercounter+2\}$. Let $LB^{\itercounter+1} = LB^{\itercounter+2} \vcentcolon= \zbar^k$. Let $\itercounter \vcentcolon= \itercounter + 2$. Go to {\bf Stage~2}.
		\end{shortitem}
	\item If $\zbar^j \geq  UB$. \emph{Prune branch.} Go to {\bf Stage~2}.
	\end{shortitem}
\end{shortitem}

\textbf{Stage 2 \emph{(control)}.}
\emph{Update lower bound.} $LB\vcentcolon=\min\{\min_{k\in \cola}{LB^k}, UB \}$.
\begin{shortitem}
	\item If $LB=UB$, \stop:
	\begin{shortitem}
		\item If $\x^{best}=\emptyset$, \ref{eq:PO} is infeasible.
		\item If $\x^{best}\neq\emptyset$, $\x^{best}$ is a global optimum and $UB$ is the optimal value of \ref{eq:PO}.
	\end{shortitem} 
	\item Otherwise, go to {\bf Stage~1}.
\end{shortitem}
\end{tcolorbox}
\caption{Basic scheme of an RLT-based algorithm.}
\label{fig:RLTalg}
\end{figure}

For the sake of the exposition, we focus for now on continuous problems, while noting that the computational experiments in Appendix~\ref{app:additional} also cover mixed-integer instances. The latter are addressed via the natural extension of the RLT technique developed in \cite{raposainteger}, where the LP relaxations $LB^k$ are replaced by their MILP counterparts.

\subsection{Bound tightening: OBBT and FBBT}\label{subsec:bt}

Although, by definition of~\ref{eq:PO}, all variables in $N$ have to be bounded, their bounds might be unnecessarily large, yielding a hyperrectangle $\Omega$ composed mainly of infeasible points or, in other words, leading to an unnecessarily large search space. Bound-tightening techniques are designed to tighten the bounds of the variables, which would reduce the size of $\Omega$ and, importantly, increase the tightness of the bound-factor constraints (and of their linear relaxations), speeding up the convergence of the branch-and-bound algorithm. The two most prominent methods are i)~optimality-based bound tightening, OBBT, in which bounds are tightened by solving a series of relaxations of minor variations of the original problem and ii)~feasibility-based bound tightening, FBBT, in which tighter bounds are deduced directly by exploiting the relations between the problem constraints and the bounds of the variables. Given that OBBT is computationally more intensive than FBBT, a standard practice is to perform OBBT at the root node only (or at specific nodes of the tree) and use FBBT at each and every node. This is the approach taken by the RLT-based algorithm used in our analysis:
\begin{description}
	\item[OBBT.] It is run at only at the root node by solving two optimization problems for each variable $j\in N$. These problems consist of minimizing and maximizing $x_j$ subject to the feasible region of problem~\ref{eq:LP}.
	\item[FBBT.] It is run at all nodes of the tree by using constraint propagation techniques on~$PP(\Omega^k)$, following the interval arithmetic approach in \cite{Belotti:2012}.
\end{description}

\subsection{Bounds on RLT variables and their role in the RLT-based algorithm}

By definition, the RLT variables are (implicitly) bounded by the products of the bounds of the original variables present in them, \emph{i.e.}, for each multiset $J$, we have $\prod_{j\in J} l_j  \leq X_J\leq \prod_{j\in J} u_j$. Yet, since constraint~\eqref{eq:RLTidentity} is not present in~\ref{eq:LP} or in the subsequent $LP(\Omega^k)$ problems, it might be useful to tighten these linear relaxations by explicitly imposing these bounds on the RLT variables. Driven by this observation, the main goal of this paper is precisely to assess the impact of this tightening of the linear relaxations and understand the associated trade-offs. Hereafter, we use \RLTlooseB and \RLTtightB to refer to configurations of the RLT-based algorithm in which these bounds on RLT variables are omitted or specified, respectively, in the $LP(\Omega^k)$ relaxations solved by the algorithm. 

As discussed in the Introduction, moving from \RLTlooseB to \RLTtightB might lead to two main effects: i)~impact on the solving time of the linear relaxations due to the large number of additional bounds passed to the linear solver and ii)~impact on the size of the resulting branch-and-bound tree due to the additional tightness of the relaxations. We argue below that, from the theoretical standpoint, for the particular case of the RLT-based algorithms under study, no approach, \RLTlooseB or \RLTtightB, should be expected to systematically produce smaller trees.

Consider an arbitrary hyperrectangle $\bOmega= \{ \x \in \mathbb{R}^{\card{N}}: 0 \leq \bl_j \leq x_j \leq \bu_j < \infty, \, \forall j \in N \}$ and a degree $\delta \in \mathbb{N}$. The main in result in Section~\ref{sec:proof}, Theorem~\ref{theor:RLTbounds}, states that, given a multiset $J\in N^\delta$, the bounds of the form  $\prod_{j\in J} \bl_j  \leq X_J\leq \prod_{j\in J} \bu_j$ are satisfied at any point that is feasible to all the linearizations of the bound-factor constraints $[F_{\bOmega}^{\delta}(J_1,J_2)]_L \geq 0, \  J_1 + J_2 \in \ndelta{N}{\delta}$.  Thus, since the bounds added to the RLT variables in \RLTtightB are redundant, if all subproblems solved by the branch-and-bound algorithm had a unique optimal solution, moving from \RLTlooseB to \RLTtightB would not change the resulting tree. Yet, in practice, even under uniqueness of the optimal solution of the linear relaxations, one might get different trees because of numerical precision when solving two equivalent relaxations (with and without explicit bounds). More importantly, the high dimensionality of~\ref{eq:LP}, induced precisely by the RLT variables, facilitates the existence of a plethora of optimal solutions at each node. Therefore, despite Theorem~\ref{theor:RLTbounds}, moving from \RLTlooseB to \RLTtightB might still impact not only the solving time of the relaxations, but also the size of the resulting tree. The direction of these differences in tree sizes due to the multiplicity of optimal solutions would be difficult to anticipate, since they would be the result of the combinatorial nature inherent to branch-and-bound algorithms. In particular, they should be expected to favor neither \RLTlooseB nor \RLTtightB. However, in Section~\ref{sec:computational} we show that there are significant differences in performance according to a wide variety of metrics (including the number of nodes).

It is also natural to wonder if the impact of making the RLT bounds explicit in  \RLTtightB may depend on whether or not bound tightening is performed by the RLT-based algorithm. Given that OBBT is run on problems with the same feasible region of~\ref{eq:LP}, Theorem~\ref{theor:RLTbounds} again implies that the outcome of the OBBT phase of the algorithm should not be significantly different between \RLTlooseB and \RLTtightB. Moreover, since FBBT is run directly on the nonlinear problem~\ref{eq:PO}, the RLT variables are not present and, hence, using \RLTlooseB or  \RLTtightB makes no difference for FBBT calls. Similarly, one may also want to study the impact of moving from continuous to mixed-integer problems. Since Theorem~\ref{theor:RLTbounds} still applies to the continuous relaxations of these mixed-integer problems, we get that the bounds of the form  $\prod_{j\in J} \bl_j  \leq X_J\leq \prod_{j\in J} \bu_j$ are also valid for them. Therefore, there is no reason to expect that the potential differences in performance between \RLTlooseB and \RLTtightB might depend on i)~whether or not bound tightening is enabled in the RLT-based algorithm or ii)~whether or not the instances under consideration have integer variables. Yet, the computational results reported in Appendix~\ref{app:additional} for these alternative experiments again show significant differences.

\section{Implied bounds on RLT variables}\label{sec:proof}
This section is fully devoted to prove that, given a linear relaxation of the form $LP(\Omega)$, the explicit addition of the bounds of the form $\prod_{j\in J} l_j  \leq X_J\leq \prod_{j\in J} u_j$ for the RLT variables does not change the feasible region, \emph{i.e.}, they are redundant. Indeed, they are redundant already if only the constraints given by the linearizations of the bound-factor constraints are considered.

Given the definition of the bound-factor constraints, the above result is hardly surprising. Yet, despite of our intuition telling us that there should be some straightforward argument to prove the result, we have only been able to do it through the lengthy proof we present below. Those readers just interested in the computational results may skip the rest of this section.

\subsection{Preliminaries}\label{subsec:prelim}
We start by presenting a couple of preliminary results related to the implications of the bound-factor constraints in~\ref{eq:LP}. 

\begin{lem}[Lemma~2 in \cite{Sherali1992} and Lemma~1.3 in \cite{Brais2022}]
Given a linear relaxation of the form $LP(\Omega)$, all the bound-factor constraints of degree smaller than $\delta$ are implied by the bound-factor constraints of degree $\delta$.
\end{lem}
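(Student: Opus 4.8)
The plan is to prove the stronger statement that every linearized bound-factor constraint of degree $\lambda<\delta$ is a \emph{nonnegative} linear combination of the linearized bound-factor constraints of degree exactly $\delta$; since the latter all hold at any feasible point of $LP(\Omega)$, so does the former. The whole argument rests on a single telescoping identity at the level of polynomials. For any variable $j^{*}\in N$ and any pair of multisets $(J_1,J_2)$ with $J_1+J_2\in\ndelta{N}{\lambda}$, the elementary fact $(x_{j^{*}}-l_{j^{*}})+(u_{j^{*}}-x_{j^{*}})=u_{j^{*}}-l_{j^{*}}$ yields
\begin{equation}
(u_{j^{*}}-l_{j^{*}})\, F_{\Omega}^{\lambda}(J_1,J_2)=F_{\Omega}^{\lambda+1}(J_1+\{j^{*}\},J_2)+F_{\Omega}^{\lambda+1}(J_1,J_2+\{j^{*}\}).
\label{eq:telescope}
\end{equation}
In words, multiplying a degree-$\lambda$ bound factor by the constant $u_{j^{*}}-l_{j^{*}}$ produces the sum of the two degree-$(\lambda+1)$ bound factors obtained by appending one extra copy of $j^{*}$ to $J_1$ or to $J_2$.

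The second step is to transfer \eqref{eq:telescope} to the linearized constraints. Since $[\cdot]_L$ merely expands a polynomial in the monomial basis and substitutes each monomial by its associated RLT variable, it is a linear map that assigns identical linear expressions to identical polynomials. As \eqref{eq:telescope} is an identity between polynomials in $\x$, applying $[\cdot]_L$ to both sides and using linearity (note the left factor is a scalar) gives
\begin{equation}
(u_{j^{*}}-l_{j^{*}})\, [F_{\Omega}^{\lambda}(J_1,J_2)]_L=[F_{\Omega}^{\lambda+1}(J_1+\{j^{*}\},J_2)]_L+[F_{\Omega}^{\lambda+1}(J_1,J_2+\{j^{*}\})]_L.
\label{eq:telescopeL}
\end{equation}

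Then I would fix a variable $j^{*}$ with $u_{j^{*}}>l_{j^{*}}$; if no such variable exists, then $\Omega$ is a single point and the claim is immediate, so assume one does. Dividing \eqref{eq:telescopeL} by the positive constant $u_{j^{*}}-l_{j^{*}}$ writes a degree-$\lambda$ linearized bound-factor constraint as a positive combination of two degree-$(\lambda+1)$ ones, so if both of the latter hold, so does the former. Iterating this step downward from $\delta$, reusing the same $j^{*}$ at each stage (legitimate because multisets allow repetition), expresses any degree-$\lambda$ linearized bound-factor constraint with $\lambda<\delta$ as
\[
[F_{\Omega}^{\lambda}(J_1,J_2)]_L=\frac{1}{(u_{j^{*}}-l_{j^{*}})^{\delta-\lambda}}\sum [F_{\Omega}^{\delta}(K_1,K_2)]_L,
\]
a nonnegative combination of $2^{\delta-\lambda}$ genuine degree-$\delta$ constraints, each obtained by distributing $\delta-\lambda$ extra copies of $j^{*}$ between the two products (so each $K_1+K_2\in\ndelta{N}{\delta}$). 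This yields the desired implication.

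The conceptual content is entirely captured by \eqref{eq:telescope}, so I expect the only real effort to be bookkeeping. Specifically, I must justify cleanly that $[\cdot]_L$ respects polynomial equalities and scalar multiples, so that \eqref{eq:telescopeL} follows from \eqref{eq:telescope}; verify that the multisets produced by repeatedly appending $j^{*}$ remain valid degree-$\delta$ index sets; and dispose of the degenerate hyperrectangle in which every variable is fixed. None of these is deep, but together they are what turns the one-line intuition into the lengthier formal argument, and the index-tracking across the iterated expansion is the step most prone to error.
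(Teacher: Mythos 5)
Your core argument is correct and is, in substance, the standard one. Note that the paper itself does not prove this lemma; it defers to \cite{Sherali1992} and \cite{Brais2022}, and the argument in those sources is precisely your telescoping identity
\[
(u_{j^*}-l_{j^*})\,F_{\Omega}^{\lambda}(J_1,J_2)=F_{\Omega}^{\lambda+1}(J_1+\{j^{*}\},J_2)+F_{\Omega}^{\lambda+1}(J_1,J_2+\{j^{*}\}),
\]
pushed through the linearization operator (valid, since $[\cdot]_L$ is linear and assigns identical expressions to identical polynomials) and iterated from degree $\delta$ downward. The bookkeeping you are worried about is indeed routine.

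The one genuine flaw is your dismissal of the degenerate case. If $u_j=l_j$ for \emph{every} $j\in N$, the claim is not ``immediate''; it is false. Take $N=\{1\}$, $\delta=2$, $l_1=u_1=c$. The three linearized degree-2 bound-factor constraints all reduce to $\pm\bigl(X_{11}-2cx_1+c^2\bigr)\geq 0$, i.e., to the single equality $X_{11}=2cx_1-c^2$, which is satisfied by $x_1=c+1$, $X_{11}=c^2+2c$; yet this point violates the degree-1 bound-factor constraint $u_1-x_1\geq 0$. (The failure is not an artifact of ignoring the constraint $\x\in\Omega$: with $\delta=3$, the point $x_1=c$, $X_{11}=c^2+1$, $X_{111}=c^3+3c$ lies in $\Omega$ and satisfies all degree-3 constraints, but violates the degree-2 constraint $[ (x_1-c)(c-x_1) ]_L\geq 0$.) The lemma therefore tacitly relies on the standard nondegeneracy assumption $l_j<u_j$, which is implicit in the cited sources; the correct move in your proof is to invoke that assumption rather than to declare the single-point case trivial. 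With that correction your argument is complete, and in fact it needs slightly less than the standard assumption: a single variable $j^{*}$ with $u_{j^{*}}>l_{j^{*}}$, reused at every stage of the induction, suffices.
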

\begin{proof}
Refer to \cite{Brais2022}.
\end{proof}


\begin{cor}[Corollary 1.3 in \cite{Brais2022}]\label{cor:bounded} Given a linear relaxation of the form $LP(\Omega)$, its feasible region is bounded. In particular, all RLT variables are bounded.
\end{cor}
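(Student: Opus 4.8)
The plan is to show that every variable appearing in $LP(\Omega)$ lies in a bounded interval, which immediately yields boundedness of the (polyhedral) feasible region. The original variables are bounded by construction, since $\x\in\Omega$ forces $l_j\le x_j\le u_j$ for all $j\in N$. It therefore remains only to bound each RLT variable $X_J$, and I would do this by induction on the cardinality (degree) $\lambda=\card{J}$ of the underlying multiset $J$, the base case $\lambda=1$ being precisely the box constraints on the original variables.

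For the inductive step the key observation is the structure of the expansion of a bound-factor constraint. Fix a multiset $J$ with $\card{J}=\lambda\ge 2$ and first take the splitting $J_1=J$, $J_2=\emptyset$. Expanding $\prod_{j\in J}(x_j-l_j)$, the unique monomial of degree $\lambda$ is $\prod_{j\in J}x_j$ with coefficient $+1$, while every other monomial arises from choosing a constant $-l_j$ in at least one factor and hence has degree strictly less than $\lambda$. Consequently its linearization has the form $[F_\Omega^\lambda(J,\emptyset)]_L = X_J + g$, where $g$ is an affine function of RLT variables of degree at most $\lambda-1$ (together with original variables and a constant). By the induction hypothesis all of these are bounded, so $g$ is bounded on the feasible region, and the constraint $X_J+g\ge 0$ yields a finite lower bound on $X_J$. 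For the upper bound I would instead remove one copy of some element $k\in J$ and use the splitting $J_1=J\setminus\{k\}$, $J_2=\{k\}$: the top-degree monomial $\prod_{j\in J}x_j$ now appears with coefficient $(-1)^{\card{J_2}}=-1$, so $[F_\Omega^\lambda(J\setminus\{k\},\{k\})]_L = -X_J + h$ with $h$ again affine in bounded lower-degree quantities, and this constraint furnishes a finite upper bound on $X_J$.

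A point deserving care is that the bound-factor constraints of degree $\lambda<\delta$ invoked above are not literally listed in $LP(\Omega)$, which contains only those of degree exactly $\delta$. This is exactly where the preceding Lemma enters: since the degree-$\delta$ bound-factor constraints imply all those of smaller degree, each inequality $[F_\Omega^\lambda(J_1,J_2)]_L\ge 0$ with $\lambda\le\delta$ is valid on the feasible region and may be used freely in the induction. (Since the degree-$\delta$ bound-factor constraints also involve, upon expansion, every RLT variable of degree at most $\delta$, this covers all variables present in $LP(\Omega)$.)

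I do not anticipate a serious obstacle, as the argument is essentially multiset bookkeeping; the only thing to get right is the sign of the leading coefficient $(-1)^{\card{J_2}}$, which is what guarantees that the two chosen splittings deliver a lower and an upper bound, respectively. Note that this establishes only \emph{some} finite bounds on the $X_J$; the sharp bounds $\prod_{j\in J}l_j\le X_J\le\prod_{j\in J}u_j$ are the content of Theorem~\ref{theor:RLTbounds} and require the considerably more delicate analysis carried out in the rest of the section.
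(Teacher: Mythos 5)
Your proof is correct, but there is nothing in the paper to compare it against line by line: the paper's ``proof'' of this corollary is simply a pointer to Corollary~1.3 of the cited reference \cite{Brais2022}, so your argument serves as a self-contained replacement rather than a variant of an in-text proof. Your induction is sound: the base case is the box constraints on the original variables; the inductive step extracts a finite lower bound on $X_J$ from $[F_{\Omega}^{\lambda}(J,\emptyset)]_L \geq 0$ (leading coefficient $+1$, all remaining terms of degree $<\lambda$ and hence bounded by hypothesis) and a finite upper bound from $[F_{\Omega}^{\lambda}(J\setminus\{k\},\{k\})]_L \geq 0$ (leading coefficient $(-1)^{\card{J_2}}=-1$). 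You also correctly identify and discharge the one delicate point: the constraints of degree $\lambda<\delta$ are not listed in $LP(\Omega)$, and their validity on the feasible region is exactly what the preceding Lemma supplies; for $\lambda=\delta$ they are present directly, and the expansion of the degree-$\delta$ bound-factor constraints guarantees every RLT variable of degree at most $\delta$ is covered. It is worth noting that your inductive step is the natural degree-$\lambda$ generalization of the computation the paper does carry out explicitly for degree~2 in Corollary~\ref{cor:bounds} (lower bound from $F_{\Omega}^{2}(J,\emptyset)$, upper bound from $F_{\Omega}^{2}(\{j_1\},\{j_2\})$), except that you settle for finiteness of the bounds rather than sharpness --- which is all this corollary asserts, the sharp bounds $\prod_{j\in J} l_j \leq X_J \leq \prod_{j\in J} u_j$ being the content of Theorem~\ref{theor:RLTbounds} and its duality-based proof.
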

\begin{proof}
Refer to \cite{Brais2022}.
\end{proof}

Building upon the above result it is straightforward to show that, for quadratic problems, the RLT variables have, as lower (upper) bounds the products of the lower (upper) bounds of the variables present in them.

\begin{cor}\label{cor:bounds} Given a linear relaxation of the form $LP(\Omega)$, the bound-factor constraints of degree~2 imply that, for each monomial $J$ of degree~2, $\prod_{j\in J} l_j  \leq X_J\leq \prod_{j\in J} u_j$.
\end{cor}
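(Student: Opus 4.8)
The plan is to write out the two bound-factor constraints of degree $2$ that involve only the variables appearing in the given degree-$2$ multiset $J$, and then read off the desired inequalities after linearization. Since $J$ has cardinality $2$, it is either of the form $J=\{i,j\}$ with $i\neq j$ (so $X_J=x_ix_j$) or $J=\{j,j\}$ (so $X_J=x_j^2$, written $X_{jj}$). I would treat these uniformly by writing $J=\{i,j\}$ with the understanding that $i$ and $j$ may coincide.

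First I would isolate the relevant constraints. The lower bound comes from the bound-factor constraint $F_{\Omega}^{2}(J,\emptyset)=\prod_{j\in J}(x_j-l_j)\geq 0$, i.e.\ $(x_i-l_i)(x_j-l_j)\geq 0$. Expanding gives $x_ix_j - l_j x_i - l_i x_j + l_i l_j \geq 0$, whose linearization replaces $x_ix_j$ by $X_J$ to yield $[F_{\Omega}^{2}(J,\emptyset)]_L = X_J - l_j x_i - l_i x_j + l_i l_j \geq 0$. Symmetrically, the upper bound comes from $F_{\Omega}^{2}(\emptyset,J)=\prod_{j\in J}(u_j-x_j)\geq 0$, i.e.\ $(u_i-x_i)(u_j-x_j)\geq 0$, whose linearization is $[F_{\Omega}^{2}(\emptyset,J)]_L = u_iu_j - u_j x_i - u_i x_j + X_J \geq 0$. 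Both of these are among the constraints $[F_{\Omega}^{\delta}(J_1,J_2)]_L \geq 0$ present in $LP(\Omega)$ when $\delta=2$ (here with $J_1+J_2=J\in N^2$).

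The main step is then to combine each linearized constraint with the variable bounds $l_i\le x_i\le u_i$ and $l_j\le x_j\le u_j$ to clear the linear terms. For the lower bound, from $X_J \geq l_j x_i + l_i x_j - l_i l_j$ I would use $x_i\ge l_i$ and $x_j\ge l_j$ (and nonnegativity of the multipliers $l_i,l_j\ge 0$, guaranteed by the definition of $\Omega$) to get $l_j x_i + l_i x_j - l_i l_j \ge l_j l_i + l_i l_j - l_i l_j = l_i l_j = \prod_{j\in J}l_j$, hence $X_J\ge \prod_{j\in J}l_j$. For the upper bound, from $X_J \leq u_j x_i + u_i x_j - u_i u_j$ I would use $x_i\le u_i$, $x_j\le u_j$ to obtain $u_j x_i + u_i x_j - u_i u_j \le u_j u_i + u_i u_j - u_i u_j = u_i u_j = \prod_{j\in J}u_j$, hence $X_J\le \prod_{j\in J}u_j$.

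The only point requiring care, rather than a genuine obstacle, is ensuring the sign conditions when substituting bounds into the mixed terms: substituting $x_i\ge l_i$ into the term $l_j x_i$ preserves the inequality precisely because the coefficient $l_j$ is nonnegative, which is where the hypothesis $0\le l_j$ from the definition of $\Omega$ in~\ref{eq:PO} is used. I would also remark that the case $i=j$ is covered verbatim, since then the constraints read $(x_j-l_j)^2\ge 0$ and $(u_j-x_j)^2\ge 0$ and the same substitutions give $l_j^2\le X_{jj}\le u_j^2$. This establishes $\prod_{j\in J} l_j \leq X_J\leq \prod_{j\in J} u_j$ for every degree-$2$ multiset $J$, as claimed.
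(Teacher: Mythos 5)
Your lower-bound argument is correct and coincides with the paper's proof: the constraint $F_{\Omega}^{2}(J,\emptyset)$ together with $x_i\ge l_i$, $x_j\ge l_j$ and the nonnegativity of the bounds gives $X_J\ge l_il_j$ exactly as in the paper. The upper bound, however, contains a genuine sign error that breaks the argument. The constraint you chose, $F_{\Omega}^{2}(\emptyset,J)=(u_i-x_i)(u_j-x_j)\ge 0$, linearizes (as you correctly write) to $u_iu_j-u_jx_i-u_ix_j+X_J\ge 0$, in which $X_J$ appears with coefficient $+1$ because $(-x_i)(-x_j)=+x_ix_j$. Rearranging yields $X_J\ge u_jx_i+u_ix_j-u_iu_j$, i.e.\ another \emph{lower} bound on $X_J$; your next line flips this into $X_J\le u_jx_i+u_ix_j-u_iu_j$ without justification, and the rest of the derivation rests on that reversed inequality. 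No substitution of variable bounds can rescue this: a constraint in which $X_J$ has a positive coefficient can never bound $X_J$ from above.

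To obtain the upper bound one must use a \emph{mixed} bound-factor constraint, in which $X_J$ enters with coefficient $-1$. This is what the paper does: from $F_{\Omega}^{2}(\{i\},\{j\})=(x_i-l_i)(u_j-x_j)\ge 0$ one gets $-X_J+u_jx_i+l_ix_j-l_iu_j\ge 0$, hence $X_J\le u_jx_i+l_ix_j-l_iu_j\le u_ju_i+l_iu_j-l_iu_j=u_iu_j$, using $x_i\le u_i$, $x_j\le u_j$ and $u_j,l_i\ge 0$. The same correction is needed in your remark about the case $i=j$: the constraint $(u_j-x_j)^2\ge 0$ again only bounds $X_{jj}$ from below, and the upper bound $X_{jj}\le u_j^2$ must instead come from $(x_j-l_j)(u_j-x_j)\ge 0$.
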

\begin{proof}
Let $X_J$ be an RLT variable with $J=\{j_1,j_2\}$. Consider bound-factor constraint $F_{\Omega}^{2}(J,\emptyset)$, given by
\[
F_{\Omega}^{2}(J,\emptyset)=[(x_{j_1}-l_{j_1})(x_{j_2}-l_{j_2})]_L=X_J-l_{j_2}x_{j_1}-l_{j_1}x_{j_2}+l_{j_1}l_{j_2}\geq 0.
\]
Then, $X_j \geq l_{j_2}x_{j_1} + l_{j_1}x_{j_2} - l_{j_1}l_{j_2} \geq l_{j_2}l_{j_1} + l_{j_1}l_{j_2} - l_{j_1}l_{j_2} = l_{j_1}l_{j_2}$. For the upper bound, bound-factor constraint $F_{\Omega}^{2}(\{j_1\},\{j_2\})$, given by
\[
F_{\Omega}^{2}(\{j_1\},\{j_2\})=[(x_{j_1}-l_{j_1})(u_{j_2}-x_{j_2})]_L=-X_J+u_{j_2}x_{j_1}+l_{j_1}x_{j_2}-l_{j_1}u_{j_2}\geq 0.
\]
Then, $X_J\leq u_{j_2}x_{j_1} + l_{j_1}x_{j_2} - l_{j_1}u_{j_2} \leq u_{j_2}u_{j_1} + l_{j_1}u_{j_2} - l_{j_1}u_{j_2}= u_{j_2}u_{j_1}$.
\end{proof}

The goal of this section is to extend the above result for the RLT variables associated with monomials of any degree. More precisely, we want to prove the following theorem.
\begin{theor}\label{theor:RLTbounds} Given a linear relaxation of the form $LP(\Omega)$, the bound-factor constraints imply that, for each monomial $J$, $\prod_{j\in J} l_j  \leq X_J\leq \prod_{j\in J} u_j$.
\end{theor}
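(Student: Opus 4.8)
The plan is to prove directly that each bound is a \emph{consequence} of the bound-factor constraints, by exhibiting an explicit \emph{nonnegative} combination of (linearizations of) bound-factor constraints that equals $X_J-\prod_{j\in J}l_j$ for the lower bound and $\prod_{j\in J}u_j-X_J$ for the upper bound. Because the linearization map $[\,\cdot\,]_L$ is linear, it suffices to establish a genuine \emph{polynomial} identity between the true products and a nonnegative combination of bound-factor polynomials, and then apply $[\,\cdot\,]_L$ termwise. The preceding lemma (degree-$\delta$ bound-factor constraints imply all those of lower degree) lets me use bound factors of every degree $\lambda\le\delta$ freely, since all of them hold on the feasible region of $LP(\Omega)$.

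For the lower bound I would fix an ordering $j_1,\dots,j_\delta$ of the multiset $J$, treating repeated indices as distinct positions, and start from the elementary telescoping identity
\[
\prod_{i=1}^{\delta}x_{j_i}-\prod_{i=1}^{\delta}l_{j_i}=\sum_{k=1}^{\delta}\Big(\prod_{i<k}x_{j_i}\Big)\,(x_{j_k}-l_{j_k})\,\Big(\prod_{i>k}l_{j_i}\Big).
\]
I would then rewrite each leading factor as $x_{j_i}=(x_{j_i}-l_{j_i})+l_{j_i}$ and expand the products $\prod_{i<k}x_{j_i}$. After expansion every summand becomes a nonnegative constant (a product of some $l_j$'s, hence $\ge 0$ since $l_j\ge 0$) times a pure lower bound factor $\prod_{j\in J_1}(x_j-l_j)=F_{\Omega}^{\lambda}(J_1,\emptyset)$ of some degree $\lambda\le\delta$. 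Applying $[\,\cdot\,]_L$ then exhibits $X_J-\prod_{j\in J}l_j$ as a nonnegative combination of the $[F_{\Omega}^{\lambda}(J_1,\emptyset)]_L$, each of which is $\ge 0$ on the feasible region, whence $X_J\ge\prod_{j\in J}l_j$.

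The upper bound is entirely symmetric. Here I would use the telescoping
\[
\prod_{i=1}^{\delta}u_{j_i}-\prod_{i=1}^{\delta}x_{j_i}=\sum_{k=1}^{\delta}\Big(\prod_{i<k}u_{j_i}\Big)\,(u_{j_k}-x_{j_k})\,\Big(\prod_{i>k}x_{j_i}\Big),
\]
and expand the trailing factors $\prod_{i>k}x_{j_i}$ via $x_{j_i}=(x_{j_i}-l_{j_i})+l_{j_i}$. Each summand then becomes a nonnegative constant times a bound factor carrying exactly one upper factor, $(u_{j_k}-x_{j_k})\prod_{j\in B}(x_j-l_j)=F_{\Omega}^{\lambda}(B,\{j_k\})\ge 0$. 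Linearizing gives $\prod_{j\in J}u_j-X_J$ as a nonnegative combination of bound-factor linearizations, so $X_J\le\prod_{j\in J}u_j$.

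I expect the only real obstacle to be bookkeeping rather than anything conceptual: one must track the multiset structure carefully (repeated variables, and the sub-multisets $J_1$ and $B$ produced by the expansions), verify that every term produced is a bona fide bound-factor constraint with a nonnegative coefficient, and confirm that the displayed identities are true polynomial identities so that they survive $[\,\cdot\,]_L$. This telescoping route seems to sidestep precisely the difficulty that makes a one-shot expansion of the single constraint $F_{\Omega}^{\delta}(J,\emptyset)$ fail: in that expansion the lower-order RLT terms appear with \emph{alternating} signs, so the natural inductive hypothesis $\prod_{j\in S}l_j\le X_S\le\prod_{j\in S}u_j$ cannot be substituted termwise, and indeed one checks already at degree three that the naive substitution points in the wrong direction. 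Writing each bound as a manifestly nonnegative combination of bound factors avoids invoking the inductive bounds on the intermediate $X_S$ altogether, which is why I would attempt this approach first.
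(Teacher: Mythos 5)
Your proof is correct, and it takes a genuinely different route from the paper's --- in fact it is precisely the kind of ``straightforward argument'' the authors say they suspected existed but could not find. The paper proceeds by induction on $\lvert J\rvert$ (base case Corollary~\ref{cor:bounds}) and proves the inductive step by LP duality: it minimizes $X_N$ subject to the linearized degree-$\lvert N\rvert$ bound factors plus the inductive lower bounds, exhibits the primal feasible point $X_R=\prod_{i\in R}l_i$, and then constructs an explicit dual feasible solution with matching objective value, whose verification takes several pages of parity bookkeeping (Lemma~\ref{lem:minus1}) and alternating binomial-sum identities, case by case. You instead give a one-shot nonnegative certificate: the telescoping identities, after expanding each $x_{j_i}$ as $(x_{j_i}-l_{j_i})+l_{j_i}$, express $X_J-\prod_{j\in J}l_j$ and $\prod_{j\in J}u_j-X_J$ directly as nonnegative combinations of linearized bound factors of degree at most $\delta$; the paper's Lemma~1 (degree-$\delta$ bound factors imply all those of lower degree), on which both proofs rely, then finishes the argument. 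This buys a shorter, induction-free, duality-free proof whose multipliers are polynomial in the data --- note that the paper's dual values $D(T)=\prod_{i\in T}l_i/(u_i-l_i)$ implicitly assume $u_i>l_i$, a degenerate case your certificate handles for free --- and at essentially no cost: the only extra content of the duality argument, tightness of the implied bound, already follows from the trivially feasible point $X_R=\prod_{i\in R}l_i$, with which your certificate can be paired just as well. Two details that your write-up gets right and that are genuinely load-bearing: nonnegativity of all multipliers uses $0\le l_j\le u_j$ from the definition of $\Omega$ (the theorem is false if negative lower bounds were allowed), and in the upper-bound identity the trailing factors must be expanded through \emph{lower} factors $(x_{j_i}-l_{j_i})+l_{j_i}$ rather than through $u_{j_i}-(u_{j_i}-x_{j_i})$, since the latter would reintroduce exactly the alternating signs you set out to avoid; your displayed formula does this correctly, so the claimed nonnegativity of every coefficient survives the expansion.
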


We prove the above result by induction on the cardinality of $J$. The case $\card{J}=2$ follows from Corollary~\ref{cor:bounds}. Thus, we assume that the result holds when $\card{J}<\card{N}$ and prove it for the case $\card{J}=\card{N}$. For the sake of exposition, we do the proof for the case $J=N=\{1,2,\ldots,n\}$. The other cases, where $J$ can have repeated variables such as $\{1144\}$ would be completely analogous, just working with the repeated variables as if they were different from each other.

The proof consists of showing that the optimal solution of the linear problem given by
\begin{equation}
\begin{aligned}
\text{minimize} & \quad X_{N} & \\
\text{subject to} & \quad [F_{\Omega}^{\card{N}}(J_1,J_2)]_L \geq 0, & J_1 + J_2 \in \ndelta{N}{\card{N}},\ J_1\cup J_2=N\\
& \quad \prod_{j\in J} l_j  \leq X_J, & \text{for all } J \text{ with } \card{J}<\card{N},  \\
\end{aligned}
\label{eq:LPproof}
\end{equation}
is precisely $\prod_{j\in N} l_j$. Thus, Theorem~\ref{theor:RLTbounds} holds irrespectively of the constraints of~\ref{eq:PO}. Further, note that not all the bound-factor constraints of degree $\card{N}$ are considered. By restricting to those for which $J_1\cup J_2=N$, we are disregarding those in which $J_1$ or $J_2$ have some repeated variables. Thus, since we will show that $\prod_{j\in N} l_j$ is a valid lower bound for $X_{N}$ in a relaxed version of $LP(\Omega)$, the bound also remains valid for $LP(\Omega)$. Note that the induction hypothesis is already used in the formulation of the above linear problem, through the inclusion of the lower bounds on variables of degree smaller than $\card{N}$.

It is straightforward to check that, by the definition of the bound-factor constraints, the solution given, for each $R\subseteq N$, by $x_R=\prod_{i \in R} l_i$, is a feasible solution with cost $\prod_{i \in N} l_i$. The core of the proof consists of showing that there is a feasible solution of the dual problem with the same objective. Then, by the weak duality theorem, we would have proved that $\prod_{i \in N}l_i$ is the optimal value of the primal problem, which is the desired result.

In order to establish that $X_{N} \leq \prod_{j\in N} u_j$ it would suffice to work with a maximization problem instead of a minimization one and consider the upper bounds instead of the lower bounds. Then, the proof would follow similar arguments to those of the minimization case.\footnote{Alternatively, one can also change the problem of maximizing $X_N$ into the problem of minimizing $-X_N$. Then perform the change of variable $Y_R=-X_R$ and reformulate with respect to the $Y_R$ variables to obtain a problem analogous to the one in Eq.~\eqref{eq:LPproof}.} 

Bound-factor constraints of the form $F_{\Omega}^{\lambda}(T,\co{T})$, with $\co{T}= N\backslash T$, are denoted by $\bfc{T}$, \emph{i.e.}, variables $i\in T$ appear as $(u_i-x_i)$ and variables $j \in \co{T}$ as $(x_j-l_j)$. For the sake of exposition we use, interchangeably, $x_i$, $X_{\{i\}}$, and $X_i$ to refer to the original variables.

\subsection{\texorpdfstring{Proof illustration when $|N|=3$}{Proof illustration when |N|=3}}\label{subsec:n3}
To provide some initial intuition for the proof of Theorem~\ref{theor:RLTbounds}, we start discussing the case $\card{N}=3$, in which we can explicitly present the linear problem in Eq.~\eqref{eq:LPproof}. The problem consists of minimizing RLT variable $X_{123}$ with respect to the following constraints:

{\small
\[
\renewcommand{\arraystretch}{1.1}
\begin{array}{c|ccccccc|ccc}
	 & X_1 & X_2 & X_3 & X_{12} & X_{13} & X_{23}& X_{123} & & \geq RHS& \\
	\hline
	X^{\text{LB}}_1 & 1 & & & & & & & & l_1 & \\
	X^{\text{LB}}_2 & & 1& & & & & & & l_2 & \\
	X^{\text{LB}}_3 & & & 1 & & & & & & l_3 & \\
	X^{\text{LB}}_{12} & & & & 1 & & & & & l_1l_2 & \\
	X^{\text{LB}}_{13} & & & & & 1& & & & l_1l_3 & \\
	X^{\text{LB}}_{23} & & & & & & 1& & & l_2l_3 & \\
	\bfcfull{\emptyset}{\{123\}} & l_2l_3 & l_1l_3 & l_1l_2 & -l_3 & -l_2 & -l_1& 1& & l_1l_2l_3 & \\
	\bfcfull{\{1\}}{\{23\}} & -l_2l_3 & -l_3u_1 & -l_2u_1 & l_3 & l_2& u_1& -1& & -l_2l_3u_1 & \\
	\bfcfull{\{2\}}{\{13\}} & -l_3u_2 & -l_1l_3 & -l_1u_2 & l_3 & u_2& l_1& -1& & -l_1l_3u_2 & \\
	\bfcfull{\{3\}}{\{12\}} & -l_2u_3 & -l_1u_3 & -l_1l_2 & u_3 & l_2& l_1& -1& & -l_1l_2u_3 & \\
	\bfcfull{\{12\}}{\{3\}} & l_3u_2 & l_3u_1 & u_1u_2 & -l_3 & -u_2& -u_1& 1& & l_3u_1u_2	& \\
	\bfcfull{\{13\}}{\{2\}} & l_2u_3 & u_1u_3 & l_2u_1 & -u_3 & -l_2& -u_1& 1& & l_2u_1u_3	& \\
	\bfcfull{\{23\}}{\{1\}} & u_2u_3 & l_1u_3 & l_1u_2 & -u_3 & -u_2& -l_1& 1& & l_1u_2u_3	& \\
	\bfcfull{\{123\}}{\{\emptyset\}} & -u_2u_3 & -u_1u_3 & -u_1u_2 & u_3 & u_2& u_1& -1& 	& -u_1u_2u_3	& \\[0.1cm]
\end{array}
\]
}
We now present a feasible solution of the dual problem with objective function $\prod_{i \in \{1,2,3\}} l_i$, which, as discussed above, suffices to prove that $\prod_{i \in \{1,2,3\}} l_i$ is the optimal value.  In the dual problem, the constraint matrix is the transpose of the one above, the right-hand side has all coefficients equal to zero except for the one associated with variable $X_{123}$, which takes value~1, and the objective function coefficients are given by the values in column RHS above. Further, given that there are no negativity constraints in the primal, the constraints in the dual problem are equality constraints and, similarly, since the primal constraints are of the form ``$\geq$'', the dual variables have to be nonnegative.

Consider the dual solution given below:
{\small
\[
\renewcommand{\arraystretch}{1.15}
\begin{array}{c|c|c|c|c|c}
\text{Constraint} & X^{\text{LB}}_3 & X^{\text{LB}}_{12} & \bfcfull{\emptyset}{\{123\}}& \bfcfull{\{1\}}{\{23\}}& \bfcfull{\{2\}}{\{13\}} \\
\hline
\text{Dual value} & l_1 l_2 & l_3 & 1+ \frac{l_1}{u_1-l_1} + \frac{l_2}{u_2-l_2} & \frac{l_1}{u_1-l_1} & \frac{l_2}{u_2-l_2} \\
\end{array},
\]}with the dual value of all other constraints being zero. It can be readily verified that this solution is feasible and that the objective function takes value $\prod_{i \in \{1,2,3\}} l_i$. For the sake of illustration, we present the calculations for the constraint associated with variable $X_1$ and for the objective function:
\paragraph{Constraint associated with $X_1$:}
    {\small
    \[
        \begin{array}{r}
        \big(1+ \frac{l_1}{u_1-l_1} + \frac{l_2}{u_2-l_2}\big)\cdot (l_2 l_3) + \frac{l_1}{u_1-l_1} \cdot (-l_2 l_3)+ \frac{l_2}{u_2-l_2} \cdot (-l_3 u_2)= \\[0.2cm]
        = l_2 l_3 + \frac{l_1l_2l_3}{u_1-l_1} + \frac{(l_2)^2 l_3}{u_2-l_2}-  \frac{l_1l_2l_3}{u_1-l_1} - \frac{l_2l_3u_2}{u_2-l_2} = l_2 l_3 + (l_2 l_3) \frac{l_2-u_2}{u_2-l_2} = l_2 l_3 - l_2 l_3 = 0.
        \end{array}
    \]}
\paragraph{Objective function:}
    {\small
    \[
        \begin{array}{r}
        (l_1l_2)\cdot l_3 + l_3 \cdot (l_1 l_2) + \big(1+ \frac{l_1}{u_1-l_1} + \frac{l_2}{u_2-l_2}\big)\cdot (l_1 l_2 l_3) + \frac{l_1}{u_1-l_1} \cdot (-l_2 l_3 u_1)+ \frac{l_2}{u_2-l_2} \cdot (-l_1 l_3 u_2)= \\[0.2cm]
        3 l_1 l_2 l_3 + l_1 l_2 l_3 \cdot \big(\frac{l_1}{u_1-l_1} + \frac{l_2}{u_2-l_2} - \frac{u_1}{u_1-l_1} -\frac{u_2}{u_2-l_2}  \big)= 3 l_1 l_2 l_3 - 2 l_1 l_2 l_3 = l_1 l_2 l_3.
        \end{array}
    \]}

\subsection{General case}\label{subsec:proof}

In this section we show that the construction we presented above for the case $\card{N}=3$ can be generalized to any cardinality. As discussed for the case $\card{N}=3$, 

\subsubsection{General expressions for the coefficients of the primal problem.}

\paragraph{Objective function coefficients.} Since we are minimizing $X_N$, all coefficients are zero except the one associated with $X_N$, which is~1.

\paragraph{Right-hand side bounds.} Given $R\subsetneq N$, the RLT variable $X_R$ is subject to the bound constraint $X_R\geq \prod_{i\in R} l_i$ (induction hypothesis).\footnote{The upper-bound constraints would be needed to derive the upper bound on $x_N$, but not for the lower bound.}

\paragraph{Right-hand side coefficients.} Given $T\subseteq N$, the right-hand side of constraint $\bfc{T}$ is given by
	\[\bfc{T}(RHS):=-\Big(\prod_{i\in T} u_i \prod_{j\in \co{T}} (-l_j)\Big)=(-1)^{\card{\co{T}}+1}\prod_{i\in T} u_i \prod_{j\in \co{T}} l_j.
	\]

\paragraph{Coefficients of the constraints' matrix.} Given $T\subseteq N$ and an RLT variable $X_R$, with $R\subseteq N$, its coefficient in constraint $\bfc{T}$ is given by
	\[
	\bfc{T}(R):=(-1)^{\card{T \cap R}} \prod_{i\in T\backslash R} u_i \prod_{j\in \co{T}\backslash R} (-l_j)=(-1)^{\card{T \cap R}+\card{\co{T}\backslash R}} \prod_{i\in T\backslash R} u_i \prod_{j\in \co{T}\backslash R} l_j.
	\]
The following result presents an equivalent expression for $\bfc{T}(R)$. 

\begin{lem}\label{lem:minus1}
Given $T\subseteq N$ and an RLT variable $X_R$, with $R\subseteq N$, its coefficient $\bfc{T}(R)$ can be equivalently written as
	\[
	\bfc{T}(R)=(-1)^{n-\card{R}+\card{T}} \prod_{i\in T\backslash R} u_i \prod_{j\in \co{T}\backslash R} l_j.
	\]
\end{lem}
\begin{proof} We need to show that, given $T\subseteq N$ and $R\subseteq N$, the numbers $\card{T \cap R}+\card{\co{T}\backslash R}$ and $n-\card{R}+\card{T}$ have the same parity or, equivalently, that their sum is an even number. First, note that
\[
\card{\co{T}\backslash R}=\card{\co{T}\cap \co{R}}=\card{\overline{T\cup R}}=n-\card{T\cup R}=n-\card{T}-\card{R}+\card{T\cap R}.
\]
Then, $\card{T \cap R}+\card{\co{T}\backslash R}=n-\card{T}-\card{R}+2\card{T\cap R}$ and we have
\[
\card{T \cap R}+\card{\co{T}\backslash R}+n-\card{R}+\card{T}=2(n-\card{R}+\card{T\cap R}). \qedhere
\]
\end{proof}

The expression in Lemma~\ref{lem:minus1} implies that the sign of the power of $-1$  in $\bfc{T}(R)$ can be computed just from the cardinality of $T$ and $R$, with no need to worry about the sets $T\cap R$ and $\co{T}\backslash R$. Moreover, when $R=\emptyset$ we get
\[
n-\card{R}+\card{T}=n+\card{T}=n+n-\card{\co{T}}=2n-\card{\co{T}}.
\]
Since $(-1)^{2n-\card{\co{T}}}=(-1)^{\card{\co{T}}}$, Lemma~\ref{lem:minus1} also implies that $\bfc{T}(RHS)=-\bfc{T}(\emptyset)$ and, hence, the expression for the right-hand side coefficients is a particular case of the formula for the coefficients of the constraint's matrix (but with the opposite sign because it has been moved to the right-hand side of the constraint).

\subsubsection{General expressions for the candidate solution of the dual problem.}

The dual solution is constructed taking variable~$n$ as reference, since we just need one of the infinitely many dual optimal solutions (stemming from the redundancies in the bound-factor constraints).

\paragraph{Reduced costs for the RLT variables (\emph{i.e.}, duals of bound constraints).} Only the reduced costs associated with $X_{\{n\}}$ and $X_{N\backslash \{n\}}$ are different zero. They are given by
		\[ RC(X_{\{n\}})=\prod_{i \in N\backslash \{n\}}l_i \qquad \text{and} \qquad RC(X_{N\backslash \{n\}})=l_n.
		\]

\paragraph{Dual values for $\bfc{T}$ constraints.} Given $T\subseteq N$, let
	\[D(T):=\left\{\begin{array}{cl}
		0, & \text{if } n \in T \text{ or } \card{T}\geq n-1\\[0.1cm]
		1, & \text{if } T=\emptyset \\[0.1cm]
		\displaystyle \prod_{i\in T} \frac{l_i}{u_i-l_i}, & \text{otherwise}.
	\end{array}\right.
		\]
Then, we define the dual value of constraint $\bfc{T}$ as
			\[
			\bfc{T}(Dual) := \sum_{\hat{T} \colon T\subseteq \hat{T}} D(\hat{T}) =  D(T) + \sum_{\substack{\hat{T} \colon T\subsetneq \hat{T} \\ n\notin \hat{T},\; \card{\hat{T}}\leq n-2}} D(\hat{T}) .\]
Note that, by definition, $\bfc{T}(Dual)$ is zero if $n\in T$ or $\card{T}\geq n-1$, since the sums involved in the definition of $\bfc{T}(Dual)$ only contain $D(\hat T)$ addends with $T \subseteq \hat T$.

We want to show the following properties of the dual solution we have just defined:
\begin{itemize}
	\item \textbf{Dual feasibility.} For each $R\subsetneq N$, $R\neq \emptyset$, the inner product of our dual solution and the vector of the coefficients of the RLT variable $X_R$ in the constraint's matrix is zero. Given that all these dual constraints are equality constraints with right hand side zero, this would establish the feasibility of the dual solution with respect to to these constraints.
	\item\textbf{Dual feasibility.} Similarly, for $R=N$, the corresponding inner product has to be~1 (the only right-hand side different from 0 in the dual, since in the primal we are minimizing $X_N$).
	\item\textbf{Dual objective.} The inner product of the dual solution and the RHS vector equals $\prod_{i\in N} l_i$, which would deliver the desired bound on $X_N$ by the weak duality theorem. Moreover, this theorem also implies the optimality of both this dual solution and the primal one in which each $X_R$ is given by $\prod_{i \in R} l_i$.
\end{itemize}

\subsubsection{\texorpdfstring{Dual feasibility: $R\neq \{n\}$, $R\neq N\backslash \{n\}$, and $R\neq N$}{Dual feasibility: R neq \{n\}, R neq N minus \{n\}, and R neq N}}\label{sec:mainfeas}

Given an RLT variable $R\subsetneq N$ with $R\neq \{n\}$ and $R\neq N\backslash \{n\}$, we have that the reduced cost associated with the bound constraint for $X_R$ is zero. Thus, to prove that the dual constraint associated with $R$ is satisfied at our dual solution, we have to show that
\[
\sum_{T\subseteq N} \bfc{T}(R)\cdot \bfc{T}(Dual)=\sum_{\substack{T\colon T\subseteq N \\ n\notin T,\; \card{T}\leq n-2}} \bfc{T}(R)\cdot \bfc{T}(Dual)=0.
\]
Equivalently, we have to show that
\begin{equation}
\sum_{\substack{T\colon T\subseteq N \\ n\notin T,\; \card{T}\leq n-2}} (-1)^{n-\card{R}+\card{T}} \prod_{i\in T\backslash R} u_i \prod_{j\in \co{T}\backslash R} l_j \cdot \sum_{\hat{T} \colon T\subseteq \hat{T}} D(\hat{T}) = 0.
\label{eq:mainRcons}
\end{equation}

Let $W\subseteq N$, with $n\notin W$ and $\card{W}\leq n-2$. We have that, in Eq.~\eqref{eq:mainRcons}, $D(W)$ appears once in the inner summation of each $T\subseteq W$. Thus, the overall coefficient of $D(W)$ is given by:
\begin{equation}
	\sum_{T\subseteq W} \bfc{T}(R) \cdot D(W)=\sum_{T\subseteq W} (-1)^{n-\card{R}+\card{T}} \prod_{i\in T\backslash R} u_i \prod_{j\in \co{T}\backslash R} l_j \cdot D(W).
\label{eq:mainCancel}
\end{equation}
We distinguish two cases:
\paragraph{Case $W\cap R\neq\emptyset$.} Let $j\in W\cap R$ and let $T\subseteq W$ with $j\in T$. Then, one can readily verify that
\[
\bfc{T}(R)=-\bfcfull{T\backslash \{j\}}{\co{T}\cup \{j\}}(R),
\]
since they involve the same products of $u_i$ and $l_i$ coefficients and their powers are such that $n-\card{R}+\card{T}=n-\card{R}+\card{T\backslash \{j\}}+1$. Hence, the corresponding terms in Eq.~\eqref{eq:mainCancel}, $\bfc{T}(R)\cdot D(W)$ and $\bfcfull{T\backslash \{j\}}{\co{T}\cup \{j\}}(R)\cdot D(W)$, cancel out. Thus, whenever $W\cap R\neq\emptyset$, the overall coefficient of $D(W)$ is zero.

\paragraph{Case $W\cap R=\emptyset$.} Now, $\co{T}\backslash R=\co{W}\backslash R$ and Eq.~\eqref{eq:mainCancel} can be equivalently written as
\begin{equation}
	\sum_{T\subseteq W} \bfc{T}(R) \cdot D(W)= (-1)^{n-\card{W}-\card{R}} \prod_{k\in \co{W}\backslash R} l_k \cdot \Big(\sum_{T \subseteq W}(-1)^{\card{W}+\card{T}} \prod_{i \in T} u_i \prod_{j\in W\backslash T} l_j  \Big) \cdot D(W).
\label{eq:uili}
\end{equation}
Since $\card{W}+\card{T}$ and $\card{W}-\card{T}$ have the same parity, the sum in the above parenthesis reduces to
\[
\sum_{T \subseteq W}(-1)^{\card{W}-\card{T}} \prod_{i \in T} u_i \prod_{j\in W\backslash T} l_j = \sum_{T \subseteq W} \Big( \prod_{i \in T} u_i \prod_{j\in W\backslash T} (-l_j) \Big) = \prod_{i \in W} (u_i-l_i).
\]
Therefore, Eq.~\eqref{eq:uili} can be equivalently written as
\[
	\sum_{T\subseteq W} \bfc{T}(R) \cdot D(W)= (-1)^{n-\card{W}-\card{R}} \prod_{k\in \co{W}\backslash R} l_k \prod_{i \in W} (u_i-l_i) \cdot D(W).
\]
%

Now, combining cases $W\cap R\neq \emptyset$ and $W\cap R=\emptyset$ above, Eq.~\eqref{eq:mainRcons} can be equivalently written as
\[
\sum_{\substack{W\colon W\subseteq N,\, W\cap R=\emptyset \\ n\notin W,\; \card{W}\leq n-2}} (-1)^{n-\card{W}-\card{R}}\cdot  \prod_{k\in \co{W}\backslash R} l_k \cdot \prod_{i \in W} (u_i-l_i) \cdot D(W)=0.
\]
Since $D(W)= \prod_{i\in W} \frac{l_i}{u_i-l_i}$, the left-hand side of the above equation reduces to
\begin{equation}
\sum_{\substack{W\colon W\subseteq N,\, W\cap R=\emptyset \\ n\notin W,\; \card{W}\leq n-2}} (-1)^{n-\card{W}-\card{R}} \cdot \prod_{k\in \co{W}\backslash R} l_k \cdot \prod_{i \in W} l_i = \sum_{\substack{W\colon W\subseteq N,\, W\cap R=\emptyset \\ n\notin W,\; \card{W}\leq n-2}} (-1)^{n-\card{W}-\card{R}}\cdot \prod_{i \in N\backslash R} l_i.
\label{eq:noDW}
\end{equation}
We are assuming that $R\neq \{n\}$, which implies that $\card{R}>1$ and, hence, once we require that $W\cap R=\emptyset$, the condition $\card{W}\leq n-2$ is always satisfied. Thus, the above expression can be written as:
\begin{equation}
\prod_{i \in N\backslash R} l_i \cdot\sum_{W\colon W\subseteq N\backslash (R \cup \{n\})} (-1)^{n-\card{W}-\card{R}}.
\label{eq:no_n-2}
\end{equation}
Since $n$ and $R$ are fixed, the sign in each addend of the above summation just depends on $\card{W}$. Let $S=N\backslash (R \cup \{n\})$. Since $R\neq N\backslash \{n\}$ and $R\neq N$, we have that $\card{S}\geq 1$. The summation covers all the subsets of $S$ and the number of such subsets with cardinality $s$ is given by the combinatorial number $\card{S}$ over $s$. Therefore, Eq.~\eqref{eq:no_n-2} can be equivalently rewritten as:
\begin{equation}
\prod_{i \in N\backslash R} l_i \cdot \sum_{s=0}^{\card{S}} \binom{\card{S}}{s} (-1)^{n-s-\card{R}}. 
\label{eq:combinatorial}
\end{equation}
The above sum corresponds to the sum of the numbers of row $\card{S}$ in Pascal's triangle,\footnote{The first row in Pascal's triangle is usually numbered as row~$0$, since it corresponds to the terms of the expansion of a binomial with power zero (and with the cardinality of the subsets of sets of cardinality zero).} with alternating signs, which is known to be zero whenever $\card{S}\geq 1$.\footnote{Each element of the {$n$-th} row of the triangle contributes to two consecutive elements of row $n+1$, one of them being odd and the other one being even. Hence, the sum of the odd elements equals the sum of the even ones.}

Therefore, we have established the feasibility of our candidate dual solution with respect to the dual constraint associated with the RLT variable $R$, with $R\neq \{n\}$ and $R\neq N\backslash \{n\}$.

\subsubsection{\texorpdfstring{Dual feasibility: $R=\{n\}$}{Dual feasibility: R=\{n\}}}

The proof is analogous to the general case in Section~\ref{sec:mainfeas}, with minor adjustments. First, the dual constraint for $R=\{n\}$ contains the reduced cost $RC(X_{\{n\}})=\prod_{i \in N\backslash \{n\}}l_i$.
We have to show that
\[
RC(X_{\{n\}})+\sum_{T\subseteq N} \bfc{T}(R)\cdot \bfc{T}(Dual)=\prod_{i \in N\backslash \{n\}}l_i+\sum_{\substack{T\colon T\subseteq N \\ n\notin T,\; \card{T}\leq n-2}} \bfc{T}(R)\cdot \bfc{T}(Dual)=0.
\]
All the derivations in Section~\ref{sec:mainfeas} for the second addend of the above equation remain valid up to Eq.~\eqref{eq:noDW}, which is no longer equivalent to Eq.~\eqref{eq:no_n-2}. Since $\card{R}=1$, the condition $W\cap R=\emptyset$ does not imply that $\card{W}\leq n-2$. Thus, if we look at the sum in Eq.~\eqref{eq:no_n-2}, the set $\{W\colon W\subseteq N\backslash (R \cup \{n\})\}$ becomes $\{W\colon W\subseteq N\backslash \{n\}\}$, which contains $N\backslash \{n\}$. Thus, $N\backslash \{n\}$ has to be removed to recover the equivalence with Eq.~\eqref{eq:noDW}. Once this is taken into consideration by subtracting the corresponding addend of the sum and, moreover, by incorporating the reduced cost $RC(X_{\{n\}})$, we get
\begin{equation*}
\prod_{i \in N\backslash \{n\}}l_i+\prod_{i \in N\backslash \{n\}} l_i \cdot\sum_{W\colon W\subseteq N\backslash \{n\}} (-1)^{n-\card{W}-1}-\prod_{i \in N\backslash \{n\}} l_i \cdot (-1)^{n-\card{N\backslash \{n\}}-1}.
\end{equation*}
Since $n-\card{N\backslash \{n\}}-1=0$, the first and the third terms cancel out and we are back to the general expression of Eq.~\eqref{eq:no_n-2}. Thus, the combinatorial argument used there applies again and $\sum_{W\colon W\subseteq N\backslash \{n\}} (-1)^{n-\card{W}-1}=0$. Hence, the dual constraint associated with $R=\{n\}$ is also satisfied. 

\subsubsection{\texorpdfstring{Dual feasibility: $R=N\backslash \{n\}$}{Dual feasibility: R=N minus \{n\}}}

Again, the proof follows the same arguments of the general case in Section~\ref{sec:mainfeas}, with minor adjustments. First, the dual constraint for $R=N\backslash \{n\}$ contains the reduced cost $RC(X_{N\backslash \{n\}})=l_n$.
We have to show that
\[
RC(X_{N\backslash \{n\}})+\sum_{T\subseteq N} \bfc{T}(R)\cdot \bfc{T}(Dual)=l_n+\sum_{\substack{T\colon T\subseteq N \\ n\notin T,\; \card{T}\leq n-2}} \bfc{T}(R)\cdot \bfc{T}(Dual)=0.
\]
All the derivations in Section~\ref{sec:mainfeas} for the second addend of the above equation remain valid up to Eq.~\eqref{eq:combinatorial}. Now, $S=N\backslash (R \cup \{n\})=\emptyset$ and, once the reduced cost $RC(X_{N\backslash \{n\}})=l_n$ is incorporated, Eq.~\eqref{eq:combinatorial} becomes
\[
l_n + \prod_{i \in \{n\}} l_i \cdot \sum_{s=0}^{0} \binom{0}{0} (-1)^{n-0-\card{N\backslash \{n\}}}=l_n-l_n=0,
\]
and the dual constraint associated with $R=N\backslash \{n\}$ is also satisfied.

\subsubsection{\texorpdfstring{Dual feasibility: $R= N$}{Dual feasibility: R= N}}
Once again, the proof follows the same arguments of the general case, with minor adjustments. First, recall that the primal problem requires to minimize $X_N$ and, hence, the right-hand side of the dual constraint associated with $R= N$ takes value~1. Thus, we have to prove that 
\[
\sum_{T\subseteq N} \bfc{T}(R)\cdot \bfc{T}(Dual)=\sum_{\substack{T\colon T\subseteq N \\ n\notin T,\; \card{T}\leq n-2}} \bfc{T}(R)\cdot \bfc{T}(Dual)=1.
\]
Similarly to the case $R=N\backslash \{n\}$, all the derivations developed in Section~\ref{sec:mainfeas} for the second addend of the above equation remain valid up to Eq.~\eqref{eq:combinatorial}. Now, $S=N\backslash (R \cup \{n\})=\emptyset$ and Eq.~\eqref{eq:combinatorial} becomes
\[
\prod_{i \in \emptyset} l_i \cdot \sum_{s=0}^{0} \binom{0}{0} (-1)^{n-0-\card{N}}=1\cdot 1^0=1,
\]
and the dual constraint associated with $R=N$ is satisfied. 

\subsection{Dual objective}
We have to show that the dual objective associated with our dual solution is precisely $\prod_{i\in N} l_i$. Since the dual objective corresponds with the inner product of our dual solution with the $\bfc{T}(RHS)$ coefficients and, by Lemma~\ref{lem:minus1}, $\bfc{T}(RHS)=-\bfc{T}(\emptyset)$, we can also build upon the analysis in Section~\ref{sec:mainfeas}. First, we need to incorporate the right-hand side coefficients and reduced costs associated with $R=\{n\}$ and $R=N\backslash \{n\}$: $RC(X_{\{n\}})=\prod_{i \in N\backslash \{n\}} l_i$ and $RC(X_{N\backslash \{n\}})=l_n$. More precisely, we have to show that
\begin{equation}
\begin{array}{rcl}
\displaystyle l_n\cdot RC(X_{\{n\}}) + \prod_{i \in N\backslash \{n\}} l_i\cdot RC(X_{N\backslash \{n\}}) - \sum_{T\subseteq N} \bfc{T}(\emptyset)\cdot \bfc{T}(Dual) &=&\\[0.6cm]
\displaystyle \prod_{i\in N} l_i + \prod_{i\in N} l_i - \sum_{\substack{T\colon T\subseteq N \\ n\notin T,\; \card{T}\leq n-2}} \bfc{T}(\emptyset)\cdot \bfc{T}(Dual) &=&\prod_{i\in N} l_i.
\label{eq:dualobj}
\end{array}
\end{equation}
Similarly to the case $R=\{n\}$, all derivations for the expression of the third summation above remain valid up to Eq.~\eqref{eq:noDW}, which is no longer equivalent to Eq.~\eqref{eq:no_n-2}. Since $\card{R}=0$, the condition $W\cap R=\emptyset$ does not imply that $\card{W}\leq n-2$. Hence, if we look at the sum in Eq.~\eqref{eq:no_n-2}, the set $\{W\colon W\subseteq N\backslash (R \cup \{n\})\}$ becomes again $\{W\colon W\subseteq N\backslash \{n\}\}$, which contains $N\backslash \{n\}$. Thus, $N\backslash \{n\}$ has to be removed to recover the equivalence with Eq.~\eqref{eq:noDW}. Taking this into consideration by subtracting the corresponding addend of the sum, we get
\begin{equation*}
\prod_{i \in N} l_i \cdot\sum_{W\colon W\subseteq N\backslash \{n\}} (-1)^{n-\card{W}}-\prod_{i \in N\backslash \{n\}} l_i \cdot (-1)^{n-\card{N\backslash \{n\}}}=\prod_{i \in N} l_i \cdot0- \prod_{i \in N} l_i \cdot (-1)^1=\prod_{i \in N} l_i.
\end{equation*}
If we substitute the summation in Eq.~\eqref{eq:dualobj} with the above value, $\prod_{i \in N} l_i$, we get
\[
\prod_{i\in N} l_i + \prod_{i\in N} l_i - \prod_{i \in N} l_i = \prod_{i\in N} l_i,
\]
which concludes the proof.

\section{Testing environment and learning methodology}\label{sec:testing_learning}

\subsection{Testing environment}\label{sec:testing}
For the numerical results presented in this paper, we use the polynomial optimization solver RAPOSa~4.4.1 \cite{Gonzalez-Rodriguez:2023}, whose core is an RLT-based algorithm. For the computational experiments, we use instances from three well-known test sets. The first one, DS, is taken from \cite{Dalkiran2016} and consists of 180 instances of randomly generated continuous polynomial optimization problems of different degrees, number of variables, and density. The second test set comes from MINLPLib \cite{MINLPLib}, a reference library of mixed-integer nonlinear programming instances. We have selected from MINLPLib those instances that are polynomial optimization problems with box-constrained variables resulting in 189 continuous instances and 214 mixed-integer instances. Finally, the last test set comes from QPLIB \cite{qplib}, which is a library with quadratic instances in which we have selected those instances with box-constrained variables, ending up with 67 continuous instances and 141 mixed-integer instances.\footnote{RAPOSa solver can be downloaded from \url{https://raposa.usc.es/download/} and the instances used in this work are available at \url{https://raposa.usc.es/instances/}.}


All the executions have been run on the supercomputer Finisterrae~III, at Galicia Supercomputing Centre (CESGA). We used computational nodes powered with two thirty-two-core Intel Xeon Ice Lake 8352Y CPUs with 256GB of RAM and 1TB SSD. All the executions were been run with a time limit of 1 hour and the stopping criterion was set at threshold $0.001$ for either the relative gap or the absolute gap. 

To compare the performance of the different approaches under consideration, we use the following metrics:
\begin{itemize}
 \item \textbf{Unsolved}. Number of instances not solved to certified optimality (relative or absolute gap below $0.001$).
 \item \textbf{Gap}. Geometric mean of the optimality gap obtained by each approach. We exclude instances for which i)~at least one approach did not return an optimality gap and ii)~all the approaches solved it within the time limit.
 \item \textbf{Time}. Geometric mean of the running time of each approach. We exclude instances for which i)~every approach solved it in less than 5 seconds and ii)~no approach solved it within the time limit.
 \item \textbf{Pace}. Geometric mean of pace$^{LB}$, as introduced in \cite{ghaddar2023}. It represents the number of seconds needed to improve the lower bound of the algorithm by one unit (the pace at which the lower bound improved). We exclude instances solved by every approach in less than 5 seconds. The main motivation behind this performance measure is that it allows to compare the performance on all instances together, whereas Time and Gap fail to do so. Time is not informative when comparing performance between instances not solved by any approach (all of them reach the time limit). These instances can be compared using Gap which, in turn, is not informative in instances solved by all approaches (all of them close the gap), and where Time could be used. Pace, on the other hand, is informative regardless of the number of approaches that might have solved each of the different instances. 
 \item \textbf{Nodes}. Geometric mean of the number of nodes explored in the branch-and-bound tree. We only consider the instances solved by all the approaches within the time limit.
  \item \textbf{L-Time}. Geometric mean of the computational time per node used by the auxiliary linear solver to solve the linear relaxations generated along the branch-and-bound tree. We only consider the instances solved by all the approaches within the time limit.

\end{itemize}

Note that, for all the previous metrics, the lower the better. This facilitates the discussion of the computational results.

\subsection{Learning methodology}\label{sec:learning_meth}

As noted in the Introduction, part of our numerical analysis investigates the extent to which learning techniques can reveal patterns underlying our main results and, in particular, predict whether \RLTlooseB or \RLTtightB will perform better on a given instance. We now provide a brief overview of the chosen learning approach.

We build upon the learning framework developed and thoroughly analyzed in \cites{ghaddar2023, Ghaddar2025}, and that has been recently applied in \cite{raposaconic} and \cite{gomezcasares2024}. In \cites{ghaddar2023}, the authors employ learning techniques to enhance the performance of the RLT-based solver \texttt{RAPOSa} by training a model to select among different branching rules. The reported gains were substantial, with the learning-based approach achieving improvements of up to $25\%$ compared to the best individual branching rule. We have chosen to adopt the relatively simple approach of \cites{ghaddar2023}, without introducing any customization or adaptation to our specific setting. As will be discussed later, more sophisticated techniques may plausibly offer further gains in understanding or performance. However, we have refrained from pursuing this direction, as it could divert attention from the main focus of the paper, namely the significant impact of the choice of \RLTlooseB or \RLTtightB may have on performance.

Table~\ref{table:features} presents the list of core input variables (features), from which additional transformations and normalizations are derived, yielding a final set of 129 variables.\footnote{VIG and CMIG stand for two graphs that can be associated to any given polynomial optimization problem: \textit{variables intersection graph} and \textit{constraints-monomials intersection graph} (see \cite{ghaddar2023}).} Then, as concisely summarized in \cite{raposaconic}: ``The other main ingredient is the chosen performance indicator. We consider $\NLBpace$, a normalized version of $\LBpace$ with values in $[0,1]$. It is computed, for each version of the algorithm, by dividing the best (smallest) pace among all versions to be compared by the pace of the current one. Thus, the goal of the learning is to predict the performance ($\NLBpace$) of each configuration on a new instance based on a regression analysis of its performance on the training instances. Then, the configuration with the best predicted performance for the given instance is chosen.''

\begin{table}[!htbp]
\centering
{\footnotesize
\renewcommand{\arraystretch}{0.6}
\setlength{\tabcolsep}{3pt}
    \begin{tabular}{ll}
      \toprule
      \multirow{5}{*}{Variables}   &  No. of variables, variance of the density of the variables \\
         & Average/median/variance of the ranges of the variables  \\
         & Average/variance of the no. of appearances of each variable\\
         & Pct. of variables not present in any monomial with degree greater than one \\
         & Pct. of variables not present in any monomial with degree greater than two \\
        \midrule
      Constraints &  No. of constraints, Pct. of equality/linear/quadratic constraints\\
    \midrule
    \multirow{3}{*}{Monomials}   &  No. of monomials\\
      & Pct. of linear/quadratic monomials, Pct. of linear/quadratic RLT variables\\
      & Average pct. of monomials in each constraint and in the objective function\\
      \midrule
      Coefficients   &  Average/variance of the coefficients\\
     \midrule
     \multirow{3}{*}{Other}   &  Degree and density of \ref{eq:PO}\\
     & No. of variables divided by no. of constraints/degree \\
     & No. of RLT variables/monomials divided by no. of constraints\\
     \midrule
    Graphs  & Density, modularity, treewidth, and transitivity of VIG and CMIG\\
    \bottomrule
    \end{tabular}}
\caption{Features used for the learning.}
\label{table:features}
\end{table}

For the learning we rely on quantile regression models, since, as discussed in \cite{ghaddar2023}, ``the presence of outliers and of an asymmetric behavior of $\NLBpace$ (negative skewness) makes quantile regression more suitable than conventional regression models (based on the conditional mean)''. Random forests \cite{Breiman2001} are ensemble methods that aggregate multiple decision trees into a single prediction, while quantile regression forests \cite{Meinshausen2006} generalize them by estimating the conditional distribution of the response variable using all observations in each leaf rather than their average. A notable advantage is that results can be reported for the entire dataset via out-of-bag predictions, without relying on explicit training and test splits. The statistical analysis is conducted in~\texttt{R} \cite{Rlang} with the \texttt{ranger} library~\cite{rangerR}.


\section{Main computational experiments}\label{sec:computational}

\subsection{Continuous instances without bound tightening}

In this section we compare the performance of different configurations of the RLT-based algorithm RAPOSa, with the focus being on the differences in performance between approaches \RLTlooseB and \RLTtightB. Further, in order to assess whether the differences between these two approaches might be dependent on the auxiliary linear solver, all instances are solved with the linear solvers available from Google OR-Tools \cite{ortools}. More precisely, we use \solver{Gurobi} \cite{gurobi}, \solver{Clp} \cite{clp}, \solver{CPLEX} \cite{cplex}, Google's proprietary LP solver \solver{Glop}, and \solver{Xpress} \cite{xpress}. Section~\ref{app:additional} presents additional experiments designed to assess the potential interactions of \RLTlooseB and \RLTtightB with bound tightening and the presence of integer variables.

\begin{table}[!htbp]
\centering
\footnotesize
\renewcommand{\arraystretch}{0.88}
\begin{tabular}{|l|l|r|r|r|r|r|r|}
 \hline
 \multicolumn{2}{|c|}{} & \multicolumn{2}{c|}{DS} & \multicolumn{2}{c|}{MINLPLib} &\multicolumn{2}{c|}{QPLIB} \\
\cline{3-8}
 \multicolumn{2}{|c|}{} & Instances & Variation (\%) & Instances & Variation (\%)& Instances & Variation (\%) \\
 \hline
\multirow{6}{*}{\solver{Gurobi}} & Unsolved & 180 & $+$30.77  & 186 & \textbf{$-$3.23}   & 66 & \phantom{$+$}0.00           \\
 & Gap & 17 & $+$46.91  & 49 & \textbf{$-$27.06}  & 56 & \textbf{$-$1.79}                     \\
 & Time & 110 & $+$72.74  & 47 & $+$14.19   & 3 & \textbf{$-$0.29}                    \\
 & Pace & 123 & $+$63.24  & 105 & \textbf{$-$10.12} & 65 & \textbf{$-$2.06}                     \\
 & Nodes &163 & \textbf{$-$0.29}  & 122 & \textbf{$-$2.08}   & 4& $+$0.22                      \\
 & L-Time & 163& $+$178.77  & 122& $+$607.98  & 4& $+$54.96                   \\
\hline
\multirow{6}{*}{\solver{Clp}} & Unsolved & 177& \textbf{$-$13.89}  & 187  & \textbf{$-$4.92}   & 66 & \phantom{$+$}0.00          \\
 & Gap &37 & \textbf{$-$60.92}  & 45& \textbf{$-$15.08}  &55 & \textbf{$-$1.98}                      \\
 & Time &85 & \textbf{$-$48.52} & 33 & \textbf{$-$5.80}  &  2& \textbf{$-$8.12}                      \\
 & Pace & 115& \textbf{$-$40.15}  & 91& \textbf{$-$6.30}  &  65& \textbf{$-$0.50}                      \\
 & Nodes & 140& $+$9.93   & 126 & \textbf{$-$0.76}  & 3 & $+$0.29                     \\
 & L-Time & 140& \textbf{$-$45.53}  &126 & \textbf{$-$4.67}  & 3 & \textbf{$-$13.68}                     \\
\hline
\multirow{6}{*}{\solver{Cplex}} & Unsolved & 179& \phantom{$+$}0.00 &  184  & $+$1.79  &  63& \phantom{$+$}0.00 \\
 & Gap & 14& $+$13.91   & 40& $+$0.52   &  52& \textbf{$-$1.39}                    \\
 & Time & 100& $+$11.76   & 31& $+$1.31   &  3& \textbf{$-$3.91}                    \\
 & Pace & 114& $+$10.32  & 87 & \textbf{$-$0.05}  & 62& \textbf{$-$2.58}                   \\
 & Nodes & 165& $+$0.29   & 127 & \textbf{$-$3.77}  & 4& \textbf{$-$2.39}                     \\
 & L-Time & 165& $+$7.57   &  127& \textbf{$-$3.95}  &  4& \textbf{$-$8.82}                     \\
\hline
\multirow{6}{*}{\solver{Glop}} & Unsolved & 177& \textbf{$-$33.33}   & 186 & $+$8.33   & 67& \textbf{$-$1.56}                    \\
 & Gap & 37& \textbf{$-$74.43}  & 47& $+$65.95  &  56& $+$8.23                     \\
 & Time & 92& \textbf{$-$51.70}  &36 & $+$64.90  & 3 & \textbf{$-$16.17}                    \\
 & Pace &115& \textbf{$-$45.82}  & 96& $+$123.90 & 66 & \textbf{$-$100.00}                   \\
 & Nodes &140 & $+$10.81   & 121& \textbf{$-$5.21}  & 3 & $+$3.44                     \\
 & L-Time &140 & \textbf{$-$49.00}  & 121& \textbf{$-$17.35}  & 3& \textbf{$-$12.00}                     \\
\hline
\multirow{6}{*}{\solver{Xpress}} & Unsolved &180 & \textbf{$-$19.15}   & 185 & \textbf{$-$1.32}    &67 & \phantom{$+$}0.00          \\
 & Gap & 51& \textbf{$-$42.34} &  61& \textbf{$-$14.10}  & 58& $+$1.93                      \\
 & Time & 127& $+$2.29   & 57 & $+$30.11   & 2& \textbf{$-$20.25}                   \\
 & Pace & 161& $+$1.06   &  132& \textbf{$-$18.57}  & 67& \textbf{$-$0.30}                     \\
 & Nodes & 129& $+$0.29   &  109& $+$8.52    &2 & \textbf{$-$24.14}                   \\
 & L-Time & 129& $+$24.30   & 109& $+$14.14   &2 & $+$5.40                    \\
\hline 
\end{tabular}
\caption{Results without bound tightening in continuous instances.}
\label{tab:withoutbt}
\end{table}

Table~\ref{tab:withoutbt} summarized the results for the executions of \RLTlooseB and \RLTtightB for the continuous instances and without bound tightening. The results are separated by test set and by linear solver. We report the results using \RLTlooseB as the reference approach. Accordingly, the numbers in the table represent the relative change in performance when switching from \RLTlooseB to \RLTtightB: positive values indicate a performance deterioration for \RLTtightB, whereas negative values (shown in bold) indicate that \RLTtightB outperformed \RLTlooseB. We have chosen to report the results in this manner to avoid direct comparisons between linear solvers, which could distract from the primary goal of the study. To this end, the results in Table~\ref{tab:withoutbt} just allow to compare the relative impact of the tightening of the RLT bounds for the different solvers, but no comparison of their absolute performance is possible. Finally, column ``instances'' contains the number of instances used to compute each performance measure in each test set, after applying the exclusions described in Section~\ref{sec:testing}. In particular, the number of instances in row ``Unsolved'' is the total number of instances used for that test set (some instances that turned out to be problematic for some solver were removed for that solver).

In short, Table~\ref{tab:withoutbt} shows that anything can happen: nothing resembling a consistent pattern emerges. With \solver{Gurobi}, \RLTlooseB is preferable for DS, but results are somewhat mixed for MINLPLib and QPLIB. Yet, for both test sets, \RLTtightB increases the solve time of the linear relaxation quite significantly. The situation practically gets reversed with \solver{Glop}, for which \RLTtightB clearly outperforms \RLTlooseB for DS, but gets clearly outperformed in MINLP. The results for \solver{Clp}, \solver{Xpress}, and \solver{CPLEX} just confirm that the impact of tightening the bounds of the RLT variables can go in any direction which, moreover, may depend on the test set and on the specific performance metric. The only additional insight may be that \solver{Clp} seems to perform better using \RLTtightB approach in the three test sets.

These results suggest that the way in which variable bounds are handled internally by the different solvers has a big impact on performance. Yet, we have not been able to find any pattern that may explain the disparate performance results for \RLTlooseB and \RLTtightB in Table~\ref{tab:withoutbt}.

\subsection{Exploring underlying drivers}

The first question one may ask by looking at the results in Table~\ref{tab:withoutbt} is: if the linear relaxations of \RLTlooseB and \RLTtightB are mathematically equivalent optimization problems (Theorem~\ref{theor:RLTbounds}), why do the solved instances exhibit different node counts? To investigate this, we examined the branch-and-bound trees of both formulations and found that they diverge very early. The underlying reason is the high multiplicity of optimal solutions in the RLT relaxations: different optimal solutions induce different violations of the RLT-defining identities in Eq.~\eqref{eq:RLTidentity}, which in turn lead to distinct branching decisions. Although in principle this separation should not systematically advantage either configuration, the observed fluctuations in performance across metrics and solvers raise questions about the presumed neutrality of this effect.

In order to gain some understanding of the mechanisms driving these fluctuations, in this section we develop a deeper analysis of \solver{Gurobi}'s results. The reason for this choice is that \solver{Gurobi} is the default linear solver in  \solver{RAPOSa} and, hence, it's interactions with RAPOSa's core RLT-based algorithm are better understood. Moreover,  we restrict attention to instances that have been solved by both \RLTlooseB and \RLTtightB, which enables cleaner comparisons. Given that both approaches solve only 4 instances in QPLIB library, we exclude it from the analysis in this section.

Now, recall that metrics Nodes and L-Time are computed on instances solved by both approaches within the time limit. The results in Table~\ref{tab:withoutbt} for Nodes suggest that the size of the resulting tree is not as affected by the choice of \RLTlooseB or \RLTtightB as other metrics. In particular, as already discussed, \RLTtightB seems to deteriorate the performance of \solver{Gurobi} at solving the LP relaxations. In Figure~\ref{fig:rel_nodes_times_gurobi} we disaggregate the results behind Nodes and L-Time, presenting density plots of the relative differences in these metrics instance by instance. More precisely, for each instance we compute the following relative differences:\footnote{For the sake of completeness, in Appendix~\ref{app:additional} we present similar plots for the different solvers, and also for the additional experiments with bound tightening enabled in \solver{RAPOSa} and those with mixed-integer instances.}
\begin{itemize}
    \item Number of nodes: $\frac{\text{``nodes \RLTtightB''}\,-\,\text{``nodes \RLTlooseB''}}{\text{``nodes \RLTlooseB''}}$.
    \item Mean linear solve time: $\frac{\text{``mean-lin-time \RLTtightB''}\,-\,\text{``mean-lin-time \RLTlooseB''}}{\text{``mean-lin-time \RLTlooseB''}}$
\end{itemize}

\begin{figure}[!htbp]
    \centering
    \begin{subfigure}{0.48\textwidth}
        \includegraphics[width=\textwidth]{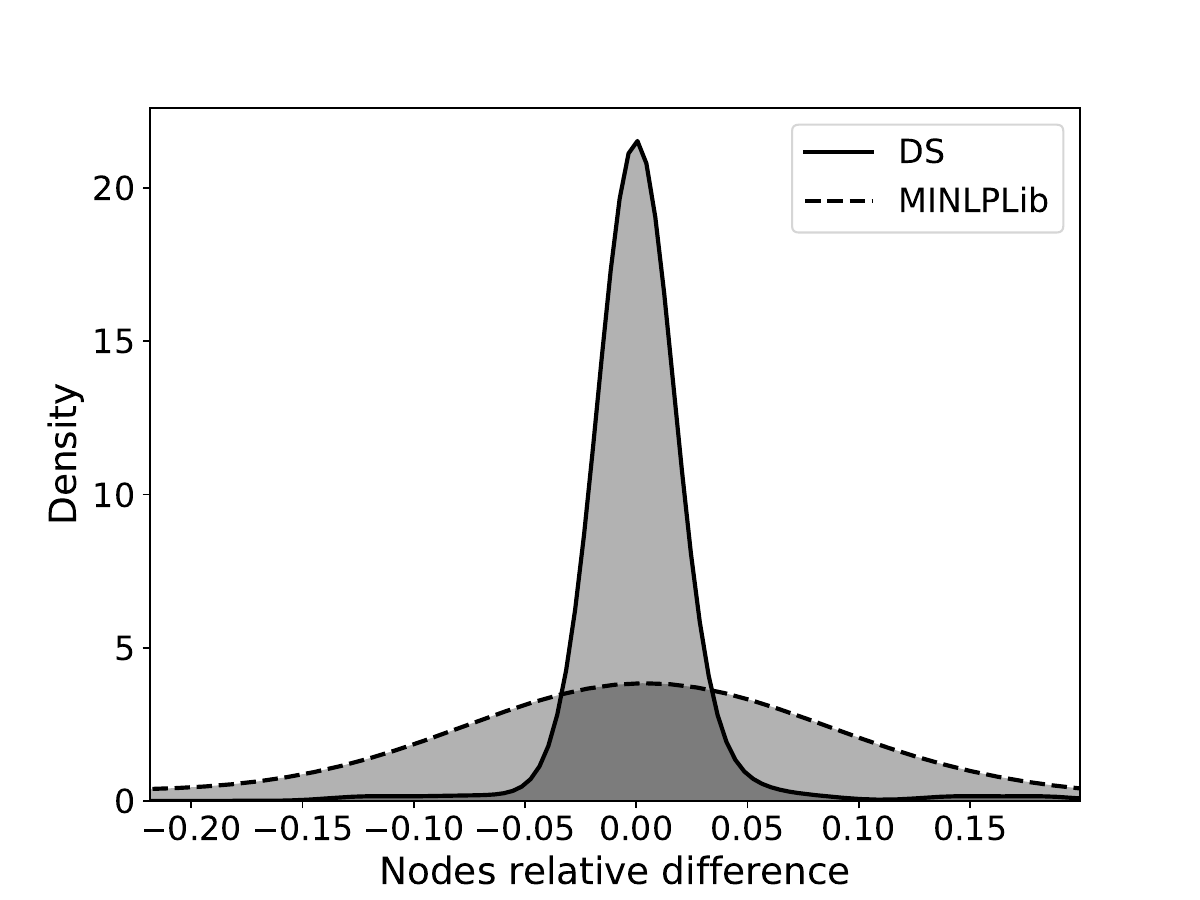}
        \caption{Relative differences in the number of nodes.}
        \label{fig:rel_nodes_gurobi}
    \end{subfigure}
    \begin{subfigure}{0.48\textwidth}
        \includegraphics[width=\textwidth]{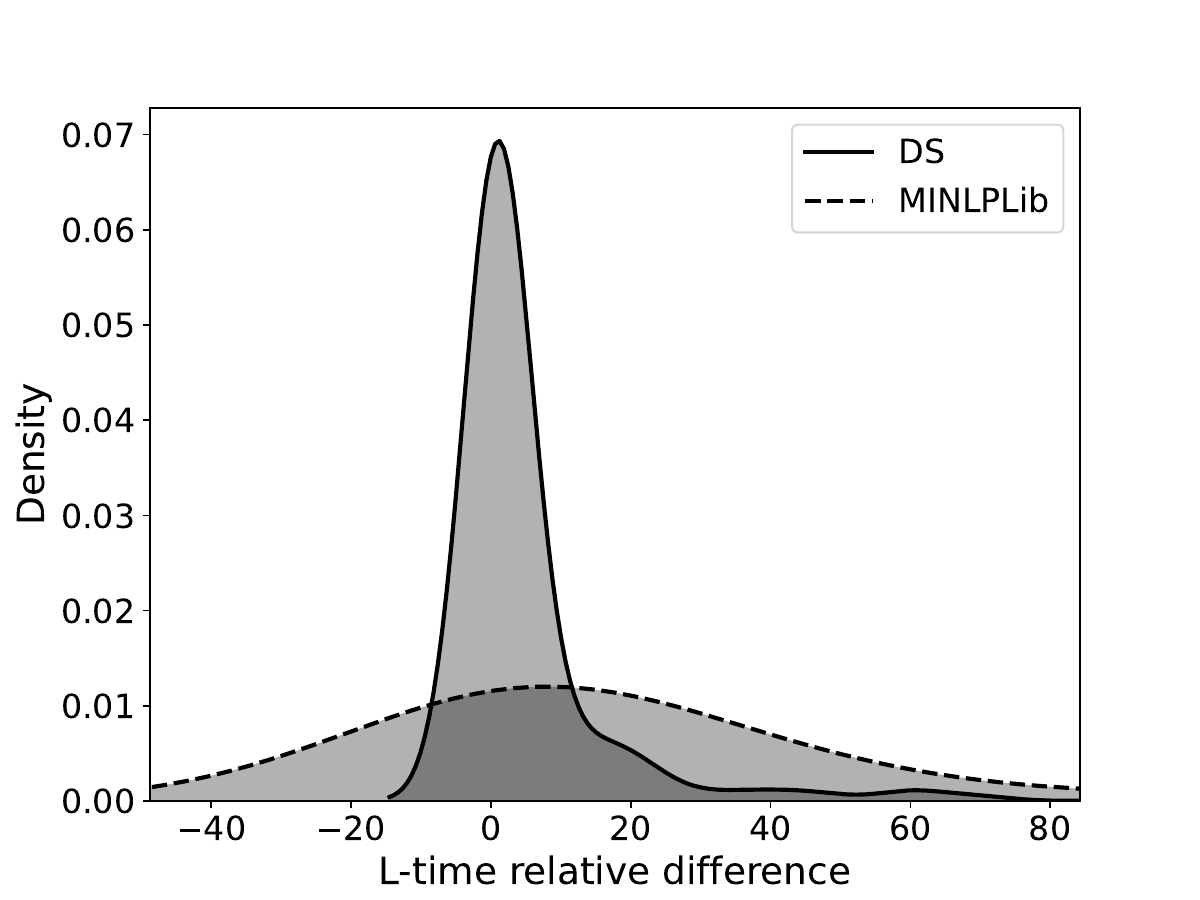}
        \caption{Relative differences in mean linear solve times.}
        \label{fig:rel_lintimes_gurobi}
    \end{subfigure}    
    \caption{Densities of relative differences between \RLTlooseB and \RLTtightB using \solver{Gurobi}.}
    \label{fig:rel_nodes_times_gurobi}
\end{figure}

Figure~\ref{fig:rel_nodes_times_gurobi}(\subref{fig:rel_nodes_gurobi}) shows that not only the mean values represented in Nodes are close to zero, but also the variability is quite small, suggesting that neither \RLTlooseB nor \RLTtightB tend to systematically deliver smaller trees. This observation is consistent across solvers and also applies to the additional computational experiments in Appendix~\ref{app:additional}, where Nodes is the metric that exhibits less variability.

Figure~\ref{fig:rel_nodes_times_gurobi}(\subref{fig:rel_lintimes_gurobi}) tells a different story. It shows that, when looking at the average values of the solve times of the linear relaxations associated to each instance, there is much more variability than before in the relative differences between \RLTlooseB nor \RLTtightB and, more importantly, there is an important asymmetry towards positive values, confirming that linear solve times tend to be larger for \RLTtightB as clearly suggested by the values $+178.77$ and $+607.98$ in Table~\ref{tab:withoutbt}. To the best of our understanding, the two most natural explanations for this behavior are:
\begin{itemize}
    \item Explicitly including the bounds on the RLT variables degrades \solver{Gurobi}'s performance.
    \item The branching decisions taken \RLTtightB lead to more difficult LP relaxations.
\end{itemize}

Note that the mean solve times reported in Figure~\ref{fig:rel_nodes_times_gurobi}(\subref{fig:rel_lintimes_gurobi}) are based on different trees, and therefore it is not straightforward to determine which explanation is more plausible. To address this, we designed the following experiment: we take the branch-and-bound tree produced by \RLTlooseB and, at each node, compute the solve times of the linear relaxations with and without the bounds on the RLT variables. In this way, the resulting differences, reported in Figure~\ref{fig:rel_times_fixed_tree}, are obtained by solving exactly the same subproblems under both approaches. Differently from what we observed in Figure~\ref{fig:rel_nodes_times_gurobi}(\subref{fig:rel_lintimes_gurobi}), the distribution is now skewed towards negative values, \emph{i.e.}, the explicit inclusion of the RLT bounds yields faster solve times. Specifically, the L-Time values in Table~\ref{tab:withoutbt}, which were $+178.77\%$ in DS and $+607.98\%$ in MINLPLib, become $-38.56\%$ in DS and $-70.35\%$ in MINLPLib. More importantly, the values along the $x$-axis are now confined to the range $[-1.5, 1.5]$, whereas before they spanned from $-40$ to $+80$.

\begin{figure}[!htbp]
    \centering
    \includegraphics[width=0.48\textwidth]{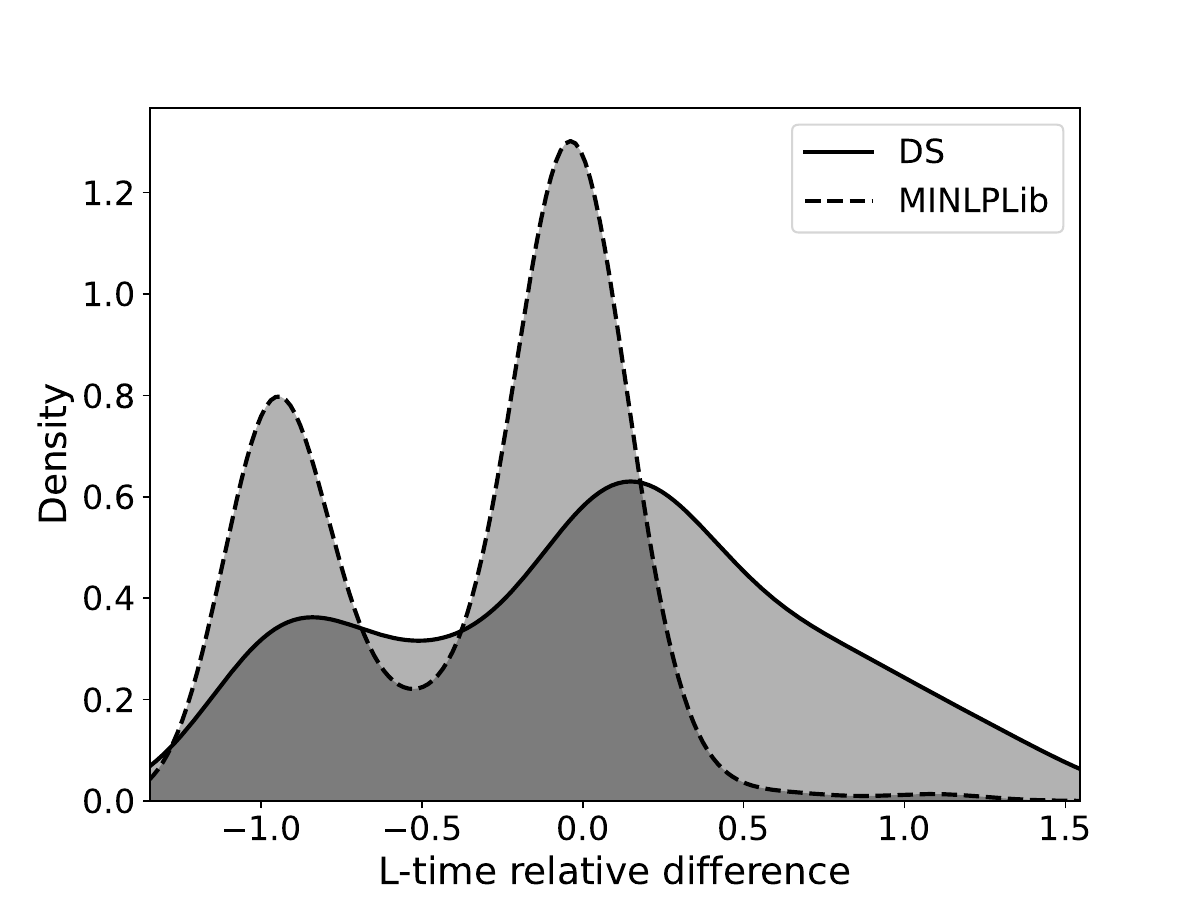}
    \caption{Relative differences in linear times on a fixed tree.}
    \label{fig:rel_times_fixed_tree}
\end{figure}

These findings strongly support the explanation that the branching decisions taken by \RLTtightB lead to more difficult LP relaxations. In particular, when Gurobi is used as the linear solver, we observe that, although \RLTlooseB and \RLTtightB generate branch-and-bound trees of comparable size, \RLTtightB tends to lead to linear relaxations that are harder to solve for Gurobi. 

Next, one may ask whether this phenomenon persists across solvers or is specific to \solver{Gurobi}. The results in Table~\ref{tab:withoutbt}, together with those in Appendix~\ref{app:additional}, indicate that its presence varies across solvers and test sets. Hence, there is nothing inherent to the \RLTtightB configuration that systematically biases branching toward more difficult LP subproblems.

Therefore, we may ultimately have to settle for the view that these results are driven by the combinatorial nature of branch-and-bound, rather than by any structural property of the formulations. Yet, even under this assumption, the impact could be concentrated in certain subclasses of problems, or the combinatorial nature could affect the performance of \RLTlooseB and \RLTtightB differently in ways that are hidden yet somewhat systematic. However, even after an in-depth analysis of the disaggregate results, we found no evidence supporting this possibility. In the following section, we pursue this avenue further by exploring whether statistical learning techniques can provide evidence of underlying patterns.

\subsection{Potential for learning}\label{sec:learning_res}

Since no clear pattern could be identified behind the results of the preceding section, we now turn to statistical learning techniques, aiming to assess whether or not the observed variability can be attributed solely the combinatorial randomness of branch-and-bound algorithms. If not, this would open the possibility of predicting, for a given instance, which approach, \RLTlooseB or \RLTtightB, is likely to perform better. To this end, we build upon the ML framework described in Section~\ref{sec:learning_meth}. To maintain focus, we start restricting attention once more to \solver{Gurobi}.

Table~\ref{tab:learningnobt} presents the results of the learning which, importantly,  is done jointly for the three test sets, although we disaggregate them in the results. As explained in Section~\ref{sec:learning_meth}, the machine learning model is trained to predict $\NLBpace$, a normalized version of $\LBpace$. For the results reported in Table~\ref{tab:learningnobt}, we therefore use as reference the configuration that showed superior overall performance in terms of Pace, which is \RLTlooseB, owing to its overwhelming advantage on the DS test set. The ML columns contain the improvement in performance obtained by the learning model, while the Oracle columns represent the performance achieved when, for each instance, we choose the best configuration according to Pace, \emph{i.e.}, an ideal configuration that corresponds to having an oracle indicating the best approach for each instance. We use Oracle as the reference against which to assess the quality of the learning.

\begin{table}[!htbp]
\centering
\footnotesize
\begin{tabular}{|l|r|r|r|r|r|r|}
\hline
                                   & \multicolumn{2}{c|}{DS}    & \multicolumn{2}{c|}{MINLPLib} & \multicolumn{2}{c|}{QPLIB} \\
\cline{2-7}
                                   & ML (\%) & Oracle (\%)      & ML (\%) & Oracle (\%)      & ML (\%) & Oracle (\%)       \\
\hline
                        Unsolved   & \phantom{$+$}0.00   & \phantom{$+$}0.00     & \textbf{$-$4.84}    & \textbf{$-$6.45}             & \phantom{$+$}0.00   & \phantom{$+$}0.00    \\
                        Gap        & \phantom{$+$}0.00   & \phantom{$+$}0.00     & \textbf{$-$27.76}   & \textbf{$-$28.48}            &  \textbf{$-$2.00}            & \textbf{$-$2.47}              \\
                        Time       & \phantom{$+$}0.00   & \textbf{$-$0.22}      &  \textbf{$-$8.61}   & \textbf{$-$14.23}            &  \textbf{$-$16.06}           & \textbf{$-$16.06}             \\
                        Pace       & \phantom{$+$}0.00   & \textbf{$-$0.20}      &  \textbf{$-$27.34}  & \textbf{$-$30.03}            &  \textbf{$-$3.04}            &  \textbf{$-$3.99}             \\
                        Nodes      & \phantom{$+$}0.00   & \phantom{$+$}0.00     &  \textbf{$-$2.79}   & \textbf{$-$5.08}             &  \textbf{$-$0.12}            &  \textbf{$-$0.12}             \\
                        L-Time     & \phantom{$+$}0.00   & \textbf{$-$0.16}      &  $+$0.56   & $+$1.25            &  \textbf{$-$13.43}           &  \textbf{$-$13.43}            \\
\hline 
\end{tabular}
\caption{Learning without bound tightening in continuous instances using \solver{Gurobi}.}
\label{tab:learningnobt}
\end{table}

Looking at Table~\ref{tab:learningnobt}, we see that in DS the Oracle barely improves upon \RLTlooseB, which indicates that \RLTlooseB is the best choice in almost every problem. In this test set, ML always selects \RLTlooseB, which explains why the ML column is 0 for all metrics. For MINLPLib, in terms of Pace, Oracle improves 30.03\%, while the learning achieves 27.34\%, which is very close to the optimal improvement. Similarly, in QPLIB, Oracle improves 3.99\%, and ML 3.04\%.

These results provide strong evidence that the variability we see in Table~\ref{tab:withoutbt} is not simply due to randomness, but that there is some underlying pattern that can, in fact, be learned. Hopefully, further investigation will provide insights into the nature of these patterns. In Table~\ref{tab:learningsummarycont} we present, for each linear solver, how much is there to be learned (Oracle) and how far the learning gets (ML). In the parentheses we put the best performing version for that solver according to Pace.


\begin{table}[!htbp]
\centering
\footnotesize
\begin{tabular}{|l|r|r|r|r|r|}
\hline
& \solver{Gurobi} & \solver{Clp}& \solver{Cplex}& \solver{Glop} & \solver{Xpress} \\[-0.1cm]
& \tiny{(\RLTlooseB)} & \tiny{(\RLTtightB)} & \tiny{(\RLTlooseB)} & \tiny{(\RLTtightB)} & \tiny{(\RLTtightB)} \\
\hline
ML (\%)	& \textbf{-11.42} & +0.38 &	\textbf{-1.46} &	\textbf{-32.00} &	\textbf{-3.88} \\
Oracle (\%)	    & \textbf{-12.88} &	\textbf{-2.74} &	\textbf{-2.48} &	\textbf{-35.17} &	\textbf{-11.15} \\
\% Learned	& 88.66 &	--- &	58.87 &	90.99 &	34.80 \\
\hline 
\end{tabular}
\caption{Learning summary in continuous instances.}
\label{tab:learningsummarycont}
\end{table}

It is worth noting the excellent results achieved by the learning model for both \solver{Gurobi} and \solver{Glop}, where ML’s performance comes very close to Oracle. Though to a lesser extent, there are also meaningful improvements for \solver{Cplex} and \solver{Xpress}. On the other hand, for \solver{Clp} the learning fails, as its performance according to Pace falls slightly behind that of \RLTtightB. Yet, this outcome is not entirely unexpected, since the scope for improvement in \solver{Clp} is limited (Oracle achieved only a 2.74\% gain) making learning considerably harder (the same could be said for \solver{Cplex}, where ML was still successful).

Overall, we believe that these results support the view that randomness alone cannot explain the disparate outcomes observed in Table~\ref{tab:withoutbt}, although we cannot rule out the possibility that these ``statistical patterns'' are spurious. Thus, to further substantiate our conclusion, we perform one final computational experiment. We have considered two modifications to the default configuration of \solver{RAPOSa}, whose impact on performance should be entirely random, thereby severely limiting the potential for learning. The first modification reverses the order of the variables prior to solving the problem, while the second reverses the order of the constraints. By construction, both modifications define optimization problems that are mathematically equivalent to the original problem, analogously to what happened when switching from \RLTlooseB to \RLTtightB.

\begin{table}[!htbp]
\centering
\footnotesize
\begin{tabular}{|l|ccc|ccc|}
\hline
       & \multicolumn{3}{c|}{Rev. Variables} & \multicolumn{3}{c|}{Rev. Constraints} \\
\hline
       & DS & MINLPLib & QPLIB & DS & MINLPLib & QPLIB \\
\hline
Unsolved & \phantom{$+$}0.00 & \phantom{$+$}0.00 & \phantom{$+$}0.00 & \phantom{$+$}0.00 & \phantom{$+$}0.00 & \textbf{$-$1.64} \\
Gap      & $+$0.88 & $+$10.59 & \textbf{$-$2.34} & $+$0.31 & $+$4.45 & \textbf{$-$0.94} \\
Time     & \textbf{$-$4.28} & \textbf{$-$10.48} & \textbf{$-$1.49} & \textbf{$-$3.25} & \textbf{$-$6.60} & \textbf{$-$1.69} \\
Pace     & \textbf{$-$3.65} & \textbf{$-$5.43} & $+$0.51 & \textbf{$-$7.16} & \textbf{$-$1.75} & $+$0.19 \\
Nodes    & \textbf{$-$0.98} & \textbf{$-$2.40} & $+$3.77 & $+$0.02 & \textbf{$-$3.74} & $+$5.11 \\
L-Time   & \textbf{$-$0.74} & \textbf{$-$4.63} & \textbf{$-$2.80} & \textbf{$-$0.56} & \textbf{$-$4.81} & $+$1.04 \\
\hline
\end{tabular}
\caption{Results of reversing the order of the variables and constraints.}
\label{tab:reversevarscons}
\end{table}

In Table~\ref{tab:reversevarscons}, we present results analogous to those in Table~\ref{tab:withoutbt}, showing the variations in performance resulting from reversing the order of variables and constraints. A first observation is that, as expected, depending on the library and the performance metric, the modifications can lead to either improvements or degradations. An important distinction from the results in Table~\ref{tab:withoutbt} is, however, that the general variability now appears to be smaller, which again hints at the impact of \RLTtightB not being entirely random. Next, we present the learning results associated to these executions.

\begin{table}[!htbp]
\centering
\footnotesize
\begin{tabular}{|l|r|r|r|r|}
\hline
                                   &\multicolumn{2}{c|}{Rev. Variables}    & \multicolumn{2}{c|}{Rev. Constraints}\\
\hline
                                   & ML (\%) & Oracle (\%)       & ML (\%) & Oracle (\%)        \\
\hline
Unsolved & \phantom{$+$}0.00 &	\phantom{$+$}0.00 &		\phantom{$+$}0.00 &	\phantom{$+$}0.00 \\
Gap & \textbf{$-$1.48} &	\textbf{$-$3.07} &	\textbf{$-$0.03} &	\textbf{$-$2.20} \\
Time &	\textbf{$-$0.18} &	\textbf{$-$4.44} & $+$1.85 &	\textbf{$-$2.82} \\
Pace  &	\textbf{$-$0.39} &	\textbf{$-$3.73} &	 $+$1.07 &	\textbf{$-$2.71} \\
Nodes  &	\textbf{$-$0.12} &	\textbf{$-$1.78} &	$+$1.14 &	\textbf{$-$0.59} \\
L-Time &	\textbf{$-$0.11} &	\textbf{$-$0.77} &	\phantom{$+$}0.00 &	\textbf{$-$0.63} \\
\hline 
\end{tabular}
\caption{Learning of reversing the order of the variables and constraints.}
\label{tab:learningreversevarscons}
\end{table}

It turns out that both ``reversed'' configurations are slightly superior to the baseline and are therefore used as the reference for the results in  Table~\ref{tab:learningreversevarscons}. Unlike what we saw in Table~\ref{tab:learningsummarycont}, where the ML column was able to deliver very strong performance for \solver{Gurobi} (close to the value in the Oracle column), this is now far from being the case. The potential for learning is limited, since the Pace improvements of Oracle in Rev. Variables and Rev. Constraints are just 3.73\% and 2.71\%, respectively. More importantly, the effective learning is negligible: 10.5\% for Rev. Variables and there is a significant degradation in performance of ML with respect to the configuration Rev. Constraints. We interpret this as a new piece of evidence supporting the hypotheses that the impact of the explicit inclusion of the bounds on the RLT variables is not random, indicating that there remains scope for additional research into the underlying structural patterns.

\subsubsection*{Interpretability.} We conclude this section by studying one last question within our learning framework. Given the nature of the learning techniques used in our analysis, (quantile) random forests, we can assign importance scores to the features and use them to identify which ones play the most significant role in the predictions, thereby shedding some light on the structural patterns that may underlie the promising results achieved by ML. Recall that our framework requires running one regression for each configuration, \RLTlooseB and \RLTtightB. Furthermore, replicating the same learning analysis for the additional computational experiments reported in Appendix~\ref{app:additional} yields a total of $4 \times 2$ regressions from which to analyze feature importance (with and without bound tightening, continuous and mixed-integer variables, \RLTlooseB and \RLTtightB).

\begin{figure}[!htbp]
    \centering
    \includegraphics[trim=10 0 0 0, width=\textwidth]{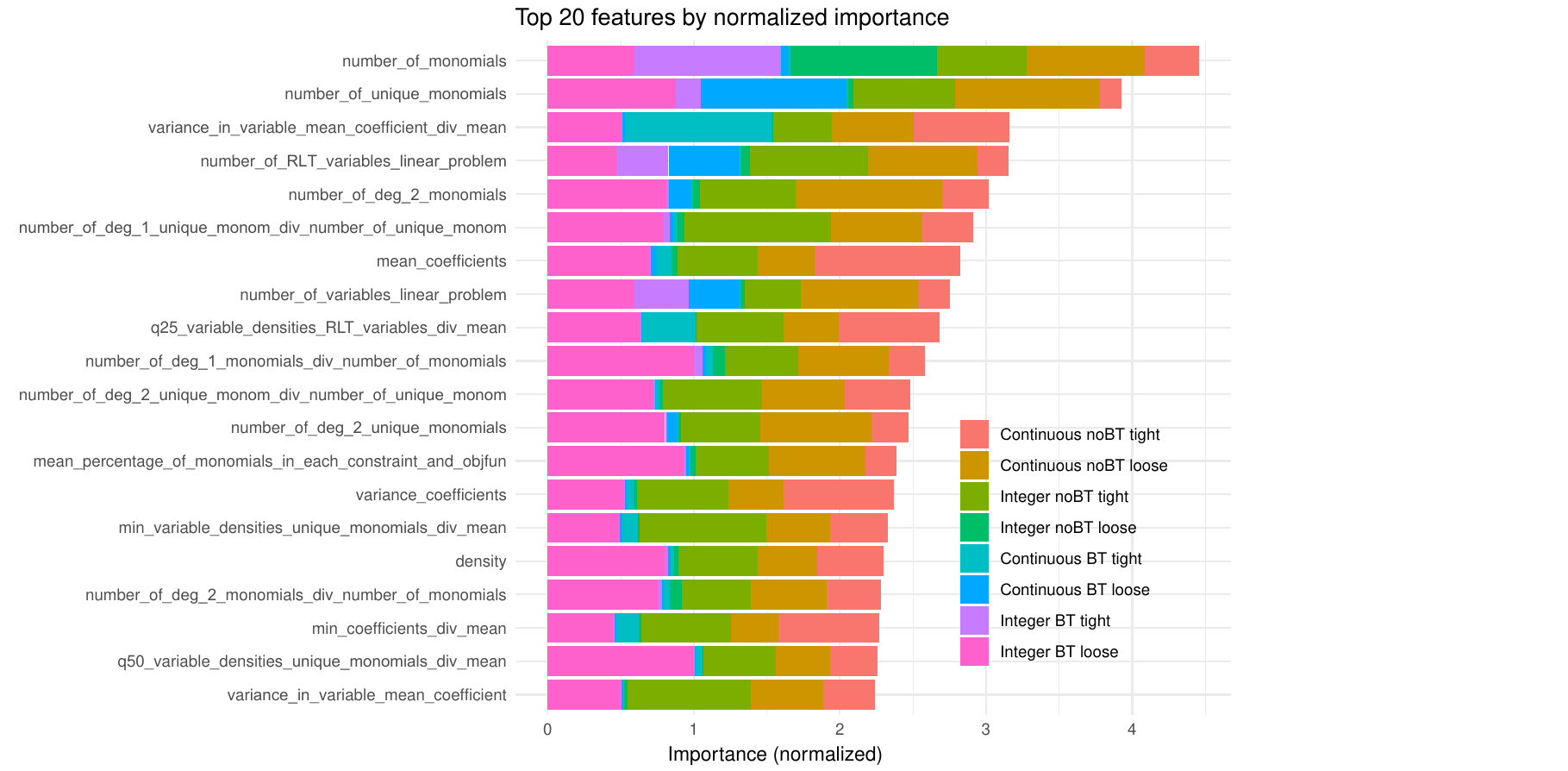}
    \caption{Importance of the features in the learning.}
    \label{fig:features}
\end{figure}

In Figure~\ref{fig:features} we present the features that are most important across the eight regressions, together with their relative importance in each individual regression. The most notable observation is the prominence of absolute and relative features related to the number of monomials and RLT variables.\footnote{Although most names are self-explanatory, some require clarification. \lstinline!number\_of\_unique\_monomials! denotes the count of monomials without repetitions.  \lstinline!number\_of\_RLT\_variables\_linear\_problem! excludes degree-1 variables (it may not coincide with the number of monomials of degree greater than one in the original problem, since bound-factor constraints entail the addition of new RLT variables). \lstinline!number\_of\_deg\_1\_unique\_monom! corresponds to the variables of the original problem that appear in degree-1 monomials.} Nevertheless, features capturing the magnitude of their coefficients and the density of the problem also play a relevant role. The set of features reported in Figure~\ref{fig:features} clearly exhibits a high degree of multicollinearity. This is not problematic for our purposes, since the main goal was to assess the potential for learning, but it does complicate the task of identifying individual effects.

In summary, we believe that future research in this particular direction should focus on more specialized learning frameworks. These frameworks ought to be designed not only for predictive performance, but also for interpretability and insight into underlying patterns. To this end, it would be useful to merge related features and introduce new ones tailored to the setting at hand.

\section{Conclusions}\label{sec:conclusions}

This paper has shown that the explicit inclusion of bounds on auxiliary variables in RLT relaxations, while theoretically redundant, can substantially affect solver performance. Across different auxiliary linear solvers and test sets, we observed improvements and degradations in solve time, relaxation time, optimality gaps, and branch-and-bound tree sizes, with no single pattern emerging. Our analysis suggests that the multiplicity of optimal solutions in the relaxations leads to divergent branching decisions, which in turn can make subproblems easier or harder. Statistical learning experiments further indicate that the observed variability cannot be explained by randomness alone, as predictive models often achieved performance close to an oracle benchmark. These findings suggest, as a promising avenue for future work, the development of learning frameworks tailored not only to performance prediction but also to interpretability, with the aim of uncovering the structural drivers behind the observed variability. 

Finally, the effects documented in this study are unlikely to be limited to RLT relaxations. Similar issues may arise in the lifted formulations behind general global optimization algorithms and solvers. We hope that this work contributes to a deeper understanding of the intricate interplay between combinatorial search, algorithm design, and problem structure, and encourages further research into how subtle modeling choices can have far-reaching computational consequences.

\section*{Acknowledgments}
This work is part of the R\&D project PID2021-124030NB-C32, funded by ERDF/EU and MICIU/AEI/10.13039/501100011033/. This research was also funded by Grupos de Referencia Competitiva ED431C-2021/24 and ED431C 2024/26 from the Consellería de Cultura, Educación e Universidades, Xunta de Galicia. Brais González-Rodríguez acknowledges the support from MICIU, through grant BG23/00155. Ignacio Gómez-Casares acknowledges the support from the Spanish Ministry of Education through FPU grant 20/01555. 


\section*{Statements and declarations}
\paragraph{Conflict of interest.} The authors declare no competing interests.

\bibliographystyle{apalike}
\bibliography{references}

@Misc{ortools,
  author = {Laurent Perron and Vincent Furnon},
  note   = {Available at: \url{https://developers.google.com/optimization/}},
  title  = {Google OR-Tools},
  year   = {2024},
}

@Misc{clp,
  author = {John Forrest and Julian Hall},
  note   = {Computational Infrastructure for Operations Research. Available at: \url{https://www.coin-or.org/Clp/}},
  title  = {COIN-OR Linear Programming (CLP)},
  year   = {2012},
}

@Misc{gurobi,
  author = {{Gurobi Optimization}},
  note   = {Available at: \url{http://www.gurobi.com}},
  title  = {Gurobi {O}ptimizer {R}eference {M}anual},
  year   = {2024},
}

@Misc{xpress,
  author = {{FICO}},
  note   = {Available at: \url{https://www.fico.com/fico-xpress-optimization/docs/latest/overview.html}},
  title  = {Xpress {O}ptimizer {R}eference {M}anual},
  year   = {2024},
}

@Misc{cplex,
  author = {{IBM}},
  note   = {Available at: \url{https://www.ibm.com/support/pages/ilog-cplex-optimization-studio-product-documentation}},
  title  = {{IBM} {ILOG} {CPLEX} {O}ptimizer},
  year   = {2024},
}

@Misc{octeract,
  author = {{Octeract}},
  note   = {Available at: \url{https://octeract.gg/docs/}},
  title  = {Octeract {E}engine {D}ocumentation},
  year   = {2024},
}

@article{antigone,
  title={{ANTIGONE}: algorithms for continuous/integer global optimization of nonlinear equations},
  author={Misener, Ruth and Floudas, Christodoulos A},
  journal={Journal of Global Optimization},
  volume={59},
  number={2},
  pages={503--526},
  year={2014},
  publisher={Springer}
}

@techreport{scip,
  author = {Suresh Bolusani and Mathieu Besan{\c{c}}on and Ksenia Bestuzheva and Antonia Chmiela and Jo{\~{a}}o Dion{\'{i}}sio and Tim Donkiewicz and Jasper van Doornmalen and Leon Eifler and Mohammed Ghannam and Ambros Gleixner and Christoph Graczyk and Katrin Halbig and Ivo Hedtke and Alexander Hoen and Christopher Hojny and Rolf van der Hulst and Dominik Kamp and Thorsten Koch and Kevin Kofler and Jurgen Lentz and Julian Manns and Gioni Mexi and Erik~M\"{u}hmer and Marc E. Pfetsch and Franziska Schl{\"o}sser and Felipe Serrano and Yuji Shinano and Mark Turner and Stefan Vigerske and Dieter Weninger and Lixing Xu},
  title = {{The SCIP Optimization Suite 9.0}},
  type = {Technical Report},
  institution = {Optimization Online},
  month = {February},
  year = {2024},
  url = {https://optimization-online.org/2024/02/the-scip-optimization-suite-9-0/}
}

@article{lindo,
  title={The global solver in the {LINDO API}},
  author={Lin, Youdong and Schrage, Linus},
  journal={Optimization Methods \& Software},
  volume={24},
  number={4-5},
  pages={657--668},
  year={2009},
  publisher={Taylor \& Francis}
}

@article{Gonzalez-Rodriguez:2023,
	author = {Gonz{\'a}lez-Rodr{\'\i}guez, Brais and Ossorio-Castillo, Joaqu{\'\i}n and Gonz{\'a}lez-D{\'\i}az, Julio and Gonz{\'a}lez-Rueda, {\'A}ngel M and Penas, David R and Rodr{\'\i}guez-Mart{\'\i}nez, Diego},
	date-added = {2023-05-05 17:18:45 +0200},
	date-modified = {2023-11-07 13:31:24 +0100},
	journal = {Journal of Global Optimization},
	volume = {85},
	pages = {541--568},
	title = {Computational advances in polynomial optimization: {RAPOS}a, a freely available global solver},
	year = {2023}}

@article{Sherali1992,
	author = {Hanif D. Sherali and Cihan H. Tuncbilek},
	date-added = {2022-01-08 11:47:20 +0100},
	date-modified = {2022-01-08 11:54:56 +0100},
	doi = {10.1007/bf00121304},
	journal = {Journal of Global Optimization},
	number = {1},
	pages = {101--112},
	publisher = {Springer Science and Business Media {LLC}},
	title = {A global optimization algorithm for polynomial programming problems using a Reformulation-Linearization Technique},
	volume = {2},
	year = 1992,
	bdsk-url-1 = {https://doi.org/10.1007%2Fbf00121304},
	bdsk-url-2 = {https://doi.org/10.1007/bf00121304}}

@article{Dalkiran2016,
	author = {Evrim Dalkiran and Hanif D. Sherali},
	doi = {10.1007/s12532-016-0099-5},
	issn = {1867-2949},
	journal = {Mathematical Programming Computation},
	pages = {337--375},
	title = {{RLT}-{POS}: Reformulation-Linearization Technique-based optimization software for solving polynomial programming problems},
	volume = {8},
	year = {2016},
	bdsk-url-1 = {https://doi.org/10.1007/s12532-016-0099-5}}

@article{minlplib,
	author = {Michael R. Bussieck and Arne Stolbjerg Drud and Alexander Meeraus},
	doi = {10.1287/ijoc.15.1.114.15159},
	issn = {1091-9856},
	journal = {INFORMS Journal on Computing},
	pages = {114--119},
	title = {{MINLPL}ib-A Collection of Test Models for Mixed-Integer Nonlinear Programming},
	volume = {15},
	year = {2003},
	bdsk-url-1 = {https://doi.org/10.1287/ijoc.15.1.114.15159}}

@article{qplib,
	author = {Fabio Furini and Emiliano Traversi and Pietro Belotti and Antonio Frangioni and Ambros Gleixner and Nick Gould and Leo Liberti and Andrea Lodi and Ruth Misener and Hans Mittelmann and Nikolaos Sahinidis and Stefan Vigerske and Angelika Wiegele},
	date-modified = {2022-02-08 10:11:17 +0100},
	doi = {10.1007/s12532-018-0147-4},
	issn = {1867-2957},
	journal = {Mathematical Programming Computation},
	pages = {237--265},
	title = {{QPLIB}: a library of quadratic programming instances},
	volume = {1},
	year = {2018},
	bdsk-url-1 = {https://doi.org/10.1007/s12532-018-0147-4}}

@article{Breiman2001,
  title = {Random Forests},
  author = {Breiman, Leo},
  journal = {Machine Learning},
  issn = {0885-6125},
  number = {1},
  pages = {5--32},
  publisher = {Kluwer Academic Publishers},
  volume = {45},
  year = {2001}
}

@ARTICLE{Meinshausen2006,
    author = {Nicolai Meinshausen},
    title = { Quantile Regression Forests},
    journal = {Journal of Machine Learning Research},
    year = {2006},
    volume = {7},
    pages = {983--999}
}

@article{Belotti2009,
author = { Pietro   Belotti  and  Jon   Lee  and  Leo   Liberti  and  François   Margot  and  Andreas   Wächter },
title = {Branching and bounds tightening techniques for non-convex {MINLP}},
journal = {Optimization Methods and Software},
volume = {24},
number = {4-5},
pages = {597-634},
year  = {2009},
publisher = {Taylor & Francis},
doi = {10.1080/10556780903087124}
}

@Article{rangerR,
    title = {{ranger}: A Fast Implementation of Random Forests for High Dimensional Data in {C++} and {R}},
    author = {Marvin N. Wright and Andreas Ziegler},
    journal = {Journal of Statistical Software},
    year = {2017},
    volume = {77},
    number = {1},
    pages = {1--17},
    doi = {10.18637/jss.v077.i01},
  }

@phdthesis{Brais2022,
  author  = "Gonz{\'a}lez-Rodr{\'\i}guez, Brais",
  title   = "Advances in Polynomial Optimization",
  school  = "University of Santiago de Compostela",
  year    = 2022,
  month   = dec
}

@Manual{baron,
  author = {N. V. Sahinidis},
  title = {{BARON} 21.1.13: Global Optimization of Mixed-Integer Nonlinear Programs, {\em User's Manual}},
  year  = {2017},
  OPTnote =  {Available at \url{http://www.minlp.com/downloads/docs/baron\%20manual.pdf}},
}

@article{baron2018,
  title={A hybrid LP/NLP paradigm for global optimization relaxations},
  author={Khajavirad, Aida and Sahinidis, Nikolaos V},
  journal={Mathematical Programming Computation},
  volume={10},
  number={3},
  pages={383--421},
  year={2018},
  publisher={Springer}
}

@article{baron1996,
  title={{BARON}: A general purpose global optimization software package},
  author={Sahinidis, Nikolaos V},
  journal={Journal of Global Optimization},
  volume={8},
  pages={201--205},
  year={1996},
  publisher={Springer}
}

@article{ghaddar2023,
  title={Learning for spatial branching: An algorithm selection approach},
  author={Ghaddar, Bissan and G{\'o}mez-Casares, Ignacio and Gonz{\'a}lez-D{\'\i}az, Julio and Gonz{\'a}lez-Rodr{\'\i}guez, Brais and Pateiro-L{\'o}pez, Beatriz and Rodr{\'\i}guez-Ballesteros, Sof{\'\i}a},
  journal={INFORMS Journal on Computing},
  volume={35},
  number={5},
  pages={1024--1043},
  year={2023},
  publisher={INFORMS}
}

@article{gomezcasares2024,
  title={Impact of domain reduction techniques in polynomial optimization: A computational study},
  author={Ignacio G{\'o}mez-Casares and Brais González-Rodríguez and Julio González-Díaz and Pablo Rodríguez-Fernández},
  journal={TOP},
  pages = {forthcoming},
  year={2025}
}

@unpublished{Belotti:2012,
	author = {Pietro Belotti and Sonia Cafieri and Jon Lee and Leo Liberti},
	date-added = {2023-05-05 17:03:23 +0200},
	date-modified = {2023-11-14 10:51:31 +0100},
	note = {\url{https://enac.hal.science/hal-00935464}},
	title = {On feasibility based bounds tightening},
	year = {2012}}

@article{raposaconic,
	author = {Brais Gonz{\'a}lez-Rodr{\'\i}guez and Ra{\'u}l Alvite-Paz{\'o} and Samuel Alvite-Paz{\'o} and Bissan Ghaddar and Julio Gonz{\'a}lez-D{\'\i}az},
	date-added = {2023-11-07 13:30:07 +0100},
	date-modified = {2023-11-14 10:56:11 +0100},
	title = {Polynomial Optimization: Tightening {RLT}-Based Branch-and-Bound Schemes with Conic Constraints},
    journal = {Journal of Optimization Theory and Applications},
    volume = {204},
    year = {2025}}

@Manual{Rlang,
    title = {R: A Language and Environment for Statistical Computing},
    author = {{R Core Team}},
    organization = {R Foundation for Statistical Computing},
    address = {Vienna, Austria},
    year = {2021},
    url = {https://www.R-project.org/},
  }

@book{tawarmalani2013,
  title={Convexification and global optimization in continuous and mixed-integer nonlinear programming: theory, algorithms, software, and applications},
  author={Tawarmalani, Mohit and Sahinidis, Nikolaos V},
  volume={65},
  year={2013},
  publisher={Springer Science \& Business Media}
}

@article{tawarmalani2004,
  title={Global optimization of mixed-integer nonlinear programs: A theoretical and computational study},
  author={Tawarmalani, Mohit and Sahinidis, Nikolaos V},
  journal={Mathematical Programming},
  volume={99},
  number={3},
  pages={563--591},
  year={2004},
  publisher={Springer}
}

@article{tawarmalani2005,
  title={A polyhedral branch-and-cut approach to global optimization},
  author={Tawarmalani, Mohit and Sahinidis, Nikolaos V},
  journal={Mathematical Programming},
  volume={103},
  number={2},
  pages={225--249},
  year={2005},
  publisher={Springer}
}

@article{Ryoo:1996,
	author = {Ryoo, Hong S. and Sahinidis, Nikolaos V.},
	date-added = {2023-06-08 10:47:54 +0200},
	date-modified = {2023-06-08 10:50:08 +0200},
	journal = {Journal of Global Optimization},
	number = {2},
	pages = {107-138},
	title = {A branch-and-reduce approach to global optimization},
	volume = {8},
	year = {1996}}

@article{mccormick1976,
  title={Computability of global solutions to factorable nonconvex programs: Part {I} -- Convex underestimating problems},
  author={McCormick, Garth P},
  journal={Mathematical Programming},
  volume={10},
  number={1},
  pages={147--175},
  year={1976},
  publisher={Springer}
}

@article{Puranik:2017,
	author = {Puranik, Yash and Sahinidis, Nikolaos V.},
	date-added = {2023-10-31 09:08:01 +0100},
	date-modified = {2023-11-14 10:27:13 +0100},
	journal = {Constraints},
	number = {3},
	pages = {338--376},
	title = {Domain reduction techniques for global {NLP} and {MINLP} optimization},
	volume = {22},
	year = {2017}}

@article{Ghaddar2025,
	author = {Brais Gonz{\'a}lez-Rodr{\'\i}guez and Ignacio G{\'o}mez-Casares and Bissan Ghaddar and Julio Gonz{\'a}lez-D{\'\i}az and Beatriz Pateiro-L{\'o}pez},
	journal = {{INFORMS} Journal on Computing},
	publisher = {Institute for Operations Research and the Management Sciences ({INFORMS})},
	title = {Learning in {RLT}-based Spatial Branching: Limitations of Strong Branching Imitation},
	year = {2025},
        pages = {forthcoming}
        }

@article{raposainteger,
  title={An extension of an {RLT}-based solver to {MINLP} polynomial problems},
  author={Gonz{\'a}lez-D{\'\i}az, Julio and Gonz{\'a}lez-Rodr{\'\i}guez, Brais and Rodr{\'\i}guez-Acevedo, Iria},
  journal={arXiv preprint arXiv:2410.17949},
  year={2024}
}

\appendix

\section{Complementary computational experiments}\label{app:additional}

This Appendix contains the results associated with a complementary set of experiments, whose main goal was to show that the results discussed in Section~\ref{sec:computational} are not specific to the executions reported there, but that similar results can be achieved in similar experiments. 

In particular, in this Appendix we report results on the following experiments:
\begin{itemize}
    \item \textbf{Continuous instances without bound tightening.} We complement the results reported in Section~\ref{sec:computational} with detailed densities of Nodes and L-Time for solvers other than \solver{Gurobi}.
    \item \textbf{Continuous instances with bound tightening.} \solver{RAPOSa} is run with both OBBT and FBBT domain reduction techniques enabled, as described in Section~\ref{subsec:bt}.
    \item \textbf{Mixed-integer instances without bound tightening.} The continuous instances of the test sets are now replaced by mixed-integer ones, defined as described in Section~\ref{sec:testing}. In order to solve these instances, the general RLT-based algorithm in Figure~\ref{fig:RLTalg} is adapted to handle integer variables as discussed in \cite{raposainteger}, where the LP relaxations $LB^k$ are replaced by their MILP counterparts.
    \item \textbf{Mixed-integer instances with bound tightening.} \solver{RAPOSa} is run, on the mixed-integer instances, with both OBBT and FBBT domain reduction techniques enabled,
\end{itemize}

Overall, the results look significantly different from those in Table~\ref{tab:withoutbt} but, at the same time, they are qualitatively identical, since anything can happen in terms of the relative performance of \RLTlooseB and \RLTtightB. 

\subsection{Continuous instances without bound tightening}

\subsubsection{Main results}

Already reported in the body of the paper in Table~\ref{tab:withoutbt}.

\subsubsection{Densities for Nodes and L-Time. Results for \solver{Gurobi} solver}

Already reported in the body of the paper in Figure~\ref{fig:rel_nodes_times_gurobi}.

\subsubsection{Densities for Nodes and L-Time. Results for the rest of the solvers}

\begin{figure}[H]
    \centering
    \begin{subfigure}{0.4\textwidth}
        \includegraphics[trim=0 0 0 28, clip, width=\textwidth]{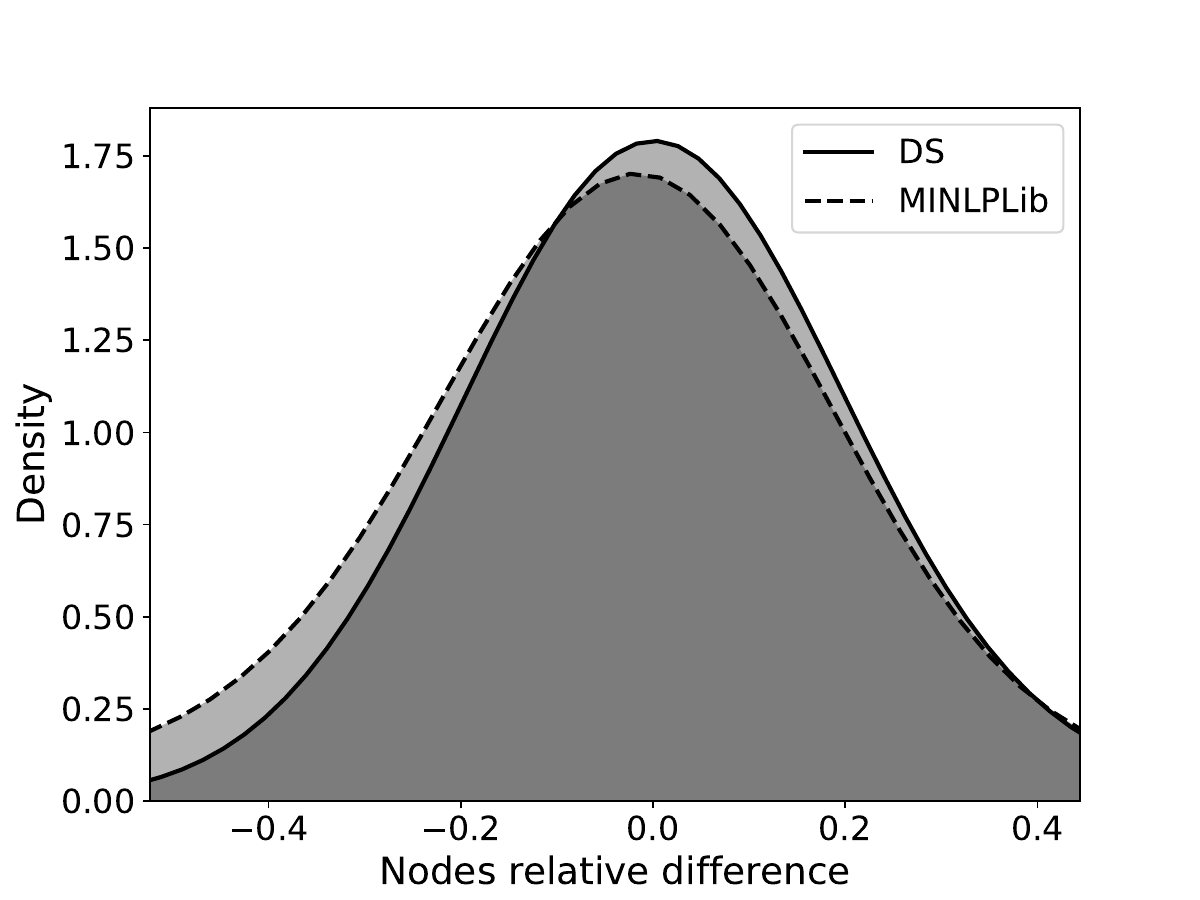}
        \caption{Nodes, \solver{Clp}.}
    \end{subfigure}    
    \begin{subfigure}{0.4\textwidth}
        \includegraphics[trim=0 0 0 28, clip, width=\textwidth]{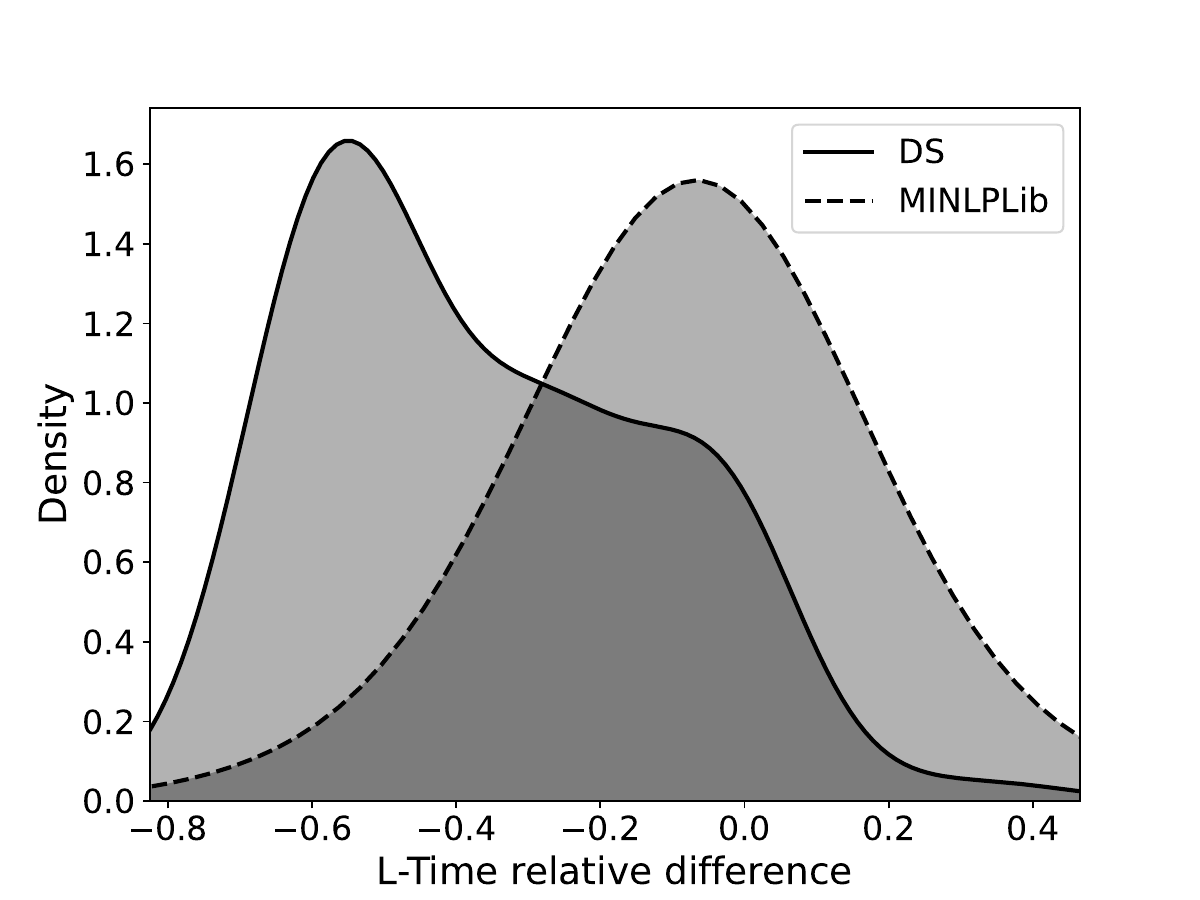}
        \caption{L-Time, \solver{Clp}.}
    \end{subfigure}    
    
    \begin{subfigure}{0.4\textwidth}
        \includegraphics[trim=0 0 0 28, clip, width=\textwidth]{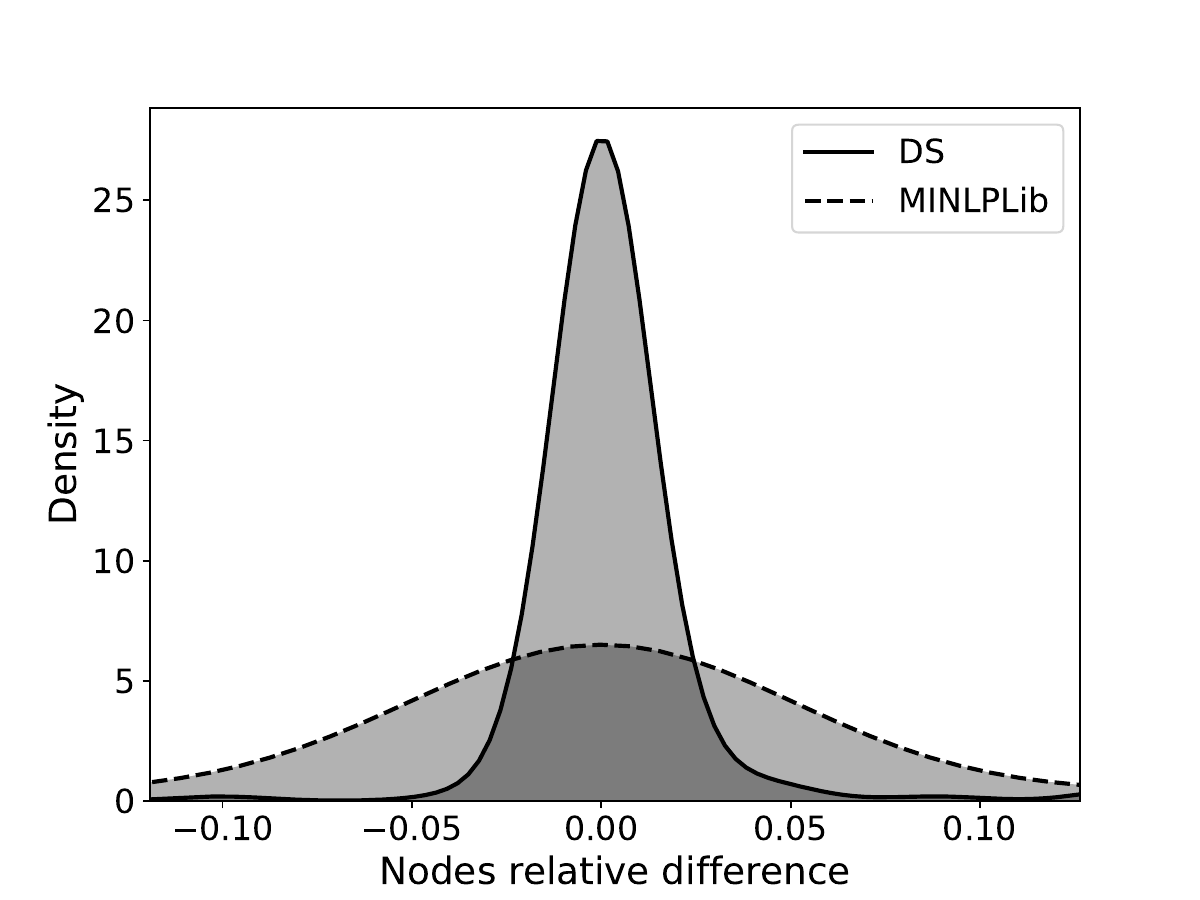}
        \caption{Nodes, \solver{Cplex}.}
    \end{subfigure}    
    \begin{subfigure}{0.4\textwidth}
        \includegraphics[trim=0 0 0 28, clip, width=\textwidth]{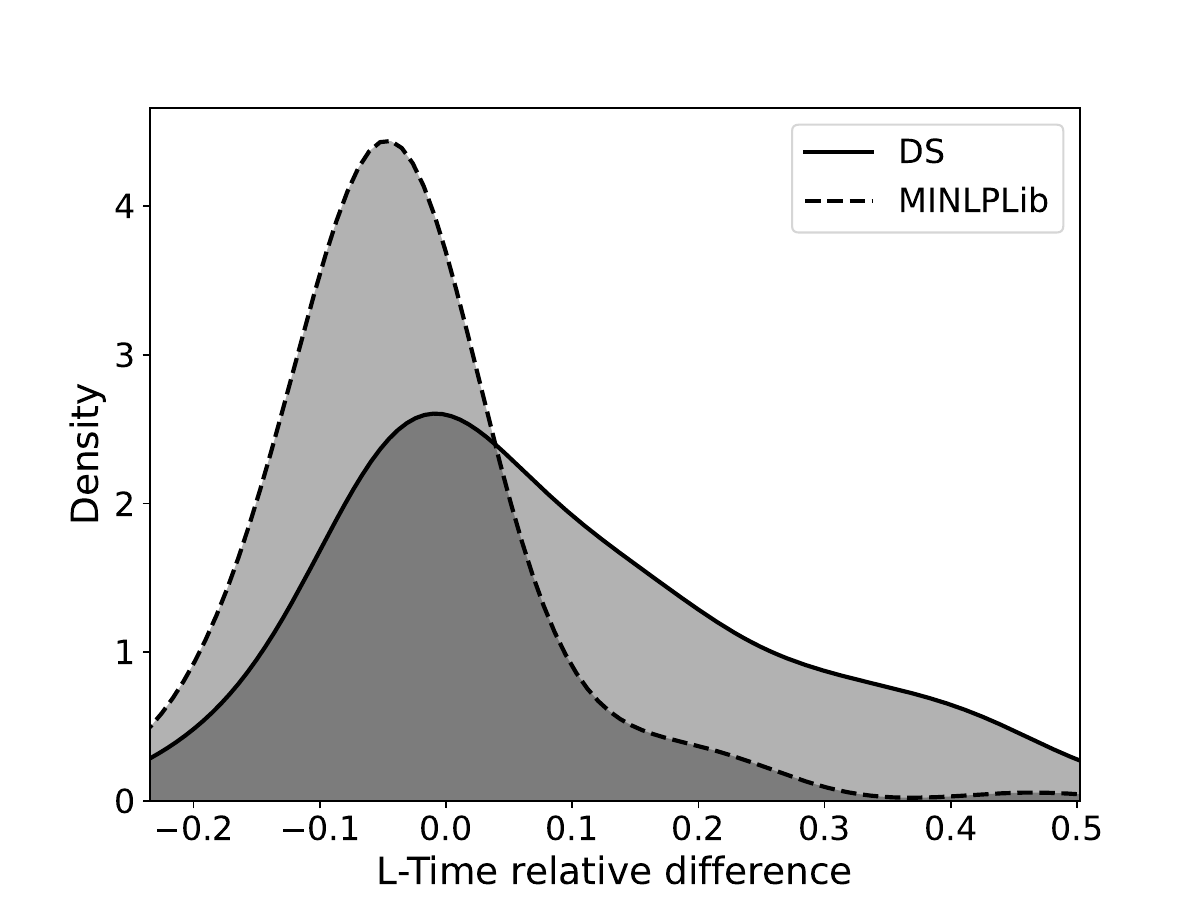}
        \caption{L-Time, \solver{Cplex}.}
    \end{subfigure}    
    
    \begin{subfigure}{0.4\textwidth}
        \includegraphics[trim=0 0 0 28, clip, width=\textwidth]{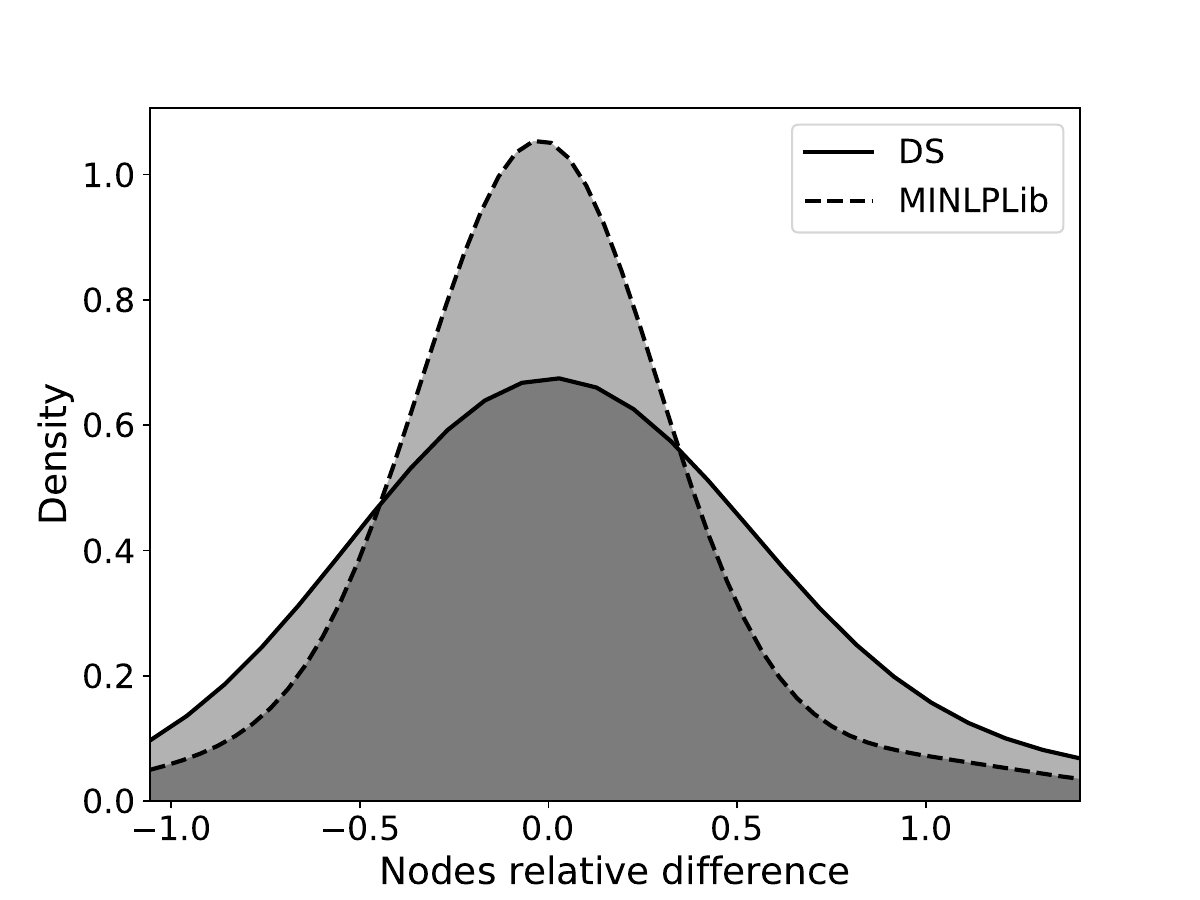}
        \caption{Nodes, \solver{Glop}.}
    \end{subfigure}    
    \begin{subfigure}{0.4\textwidth}
        \includegraphics[trim=0 0 0 28, clip, width=\textwidth]{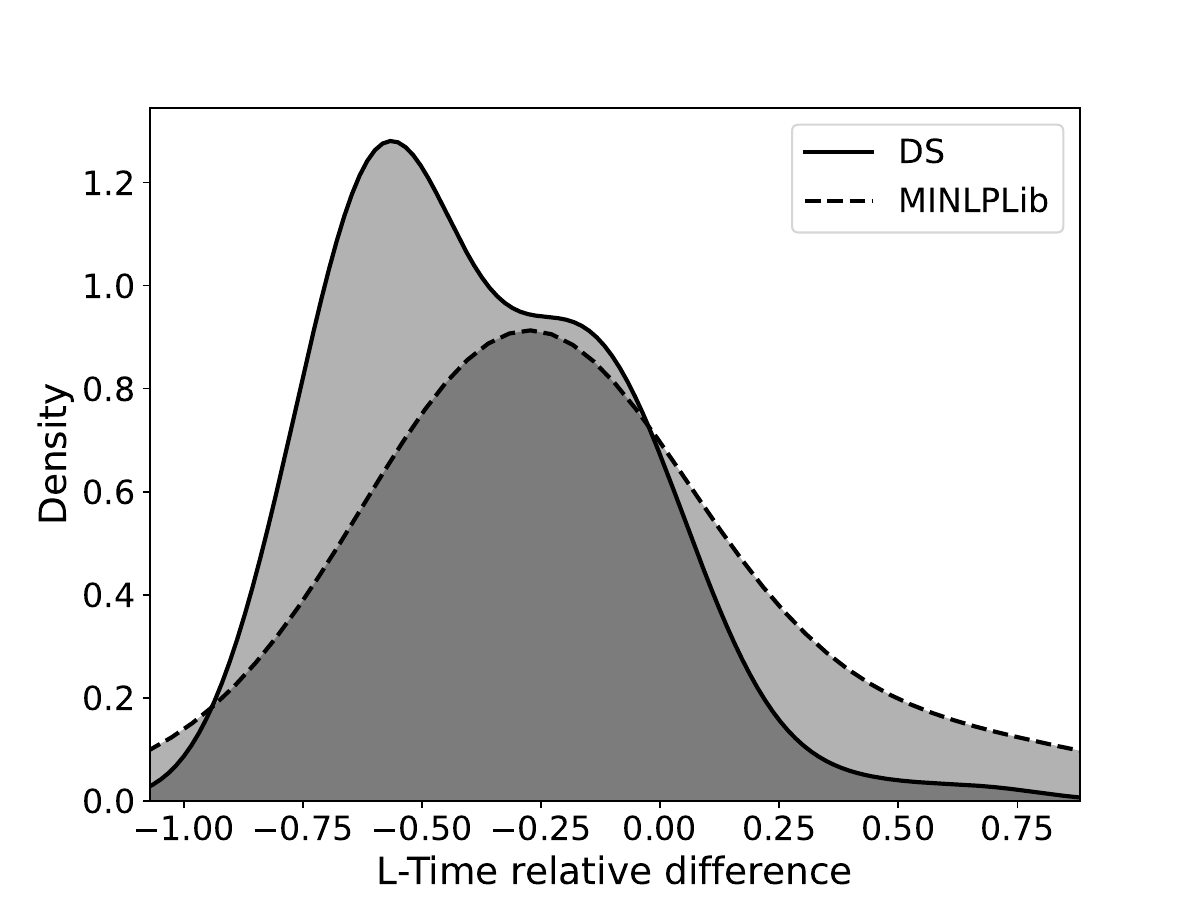}
        \caption{L-Time, \solver{Glop}.}
    \end{subfigure}    
    
    \begin{subfigure}{0.4\textwidth}
        \includegraphics[trim=0 0 0 28, clip, width=\textwidth]{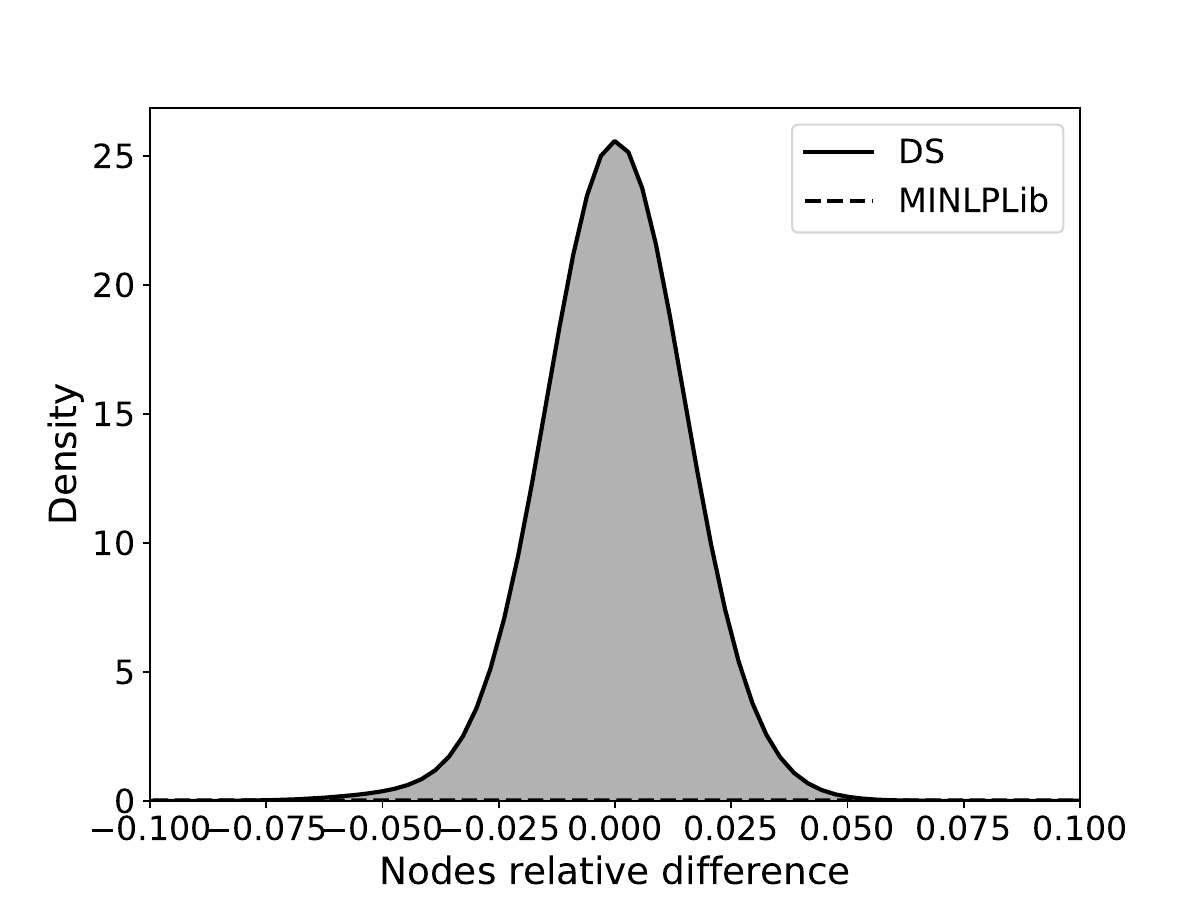}
        \caption{Nodes, \solver{Xpress}.}
    \end{subfigure}    
    \begin{subfigure}{0.4\textwidth}
        \includegraphics[trim=0 0 0 28, clip, width=\textwidth]{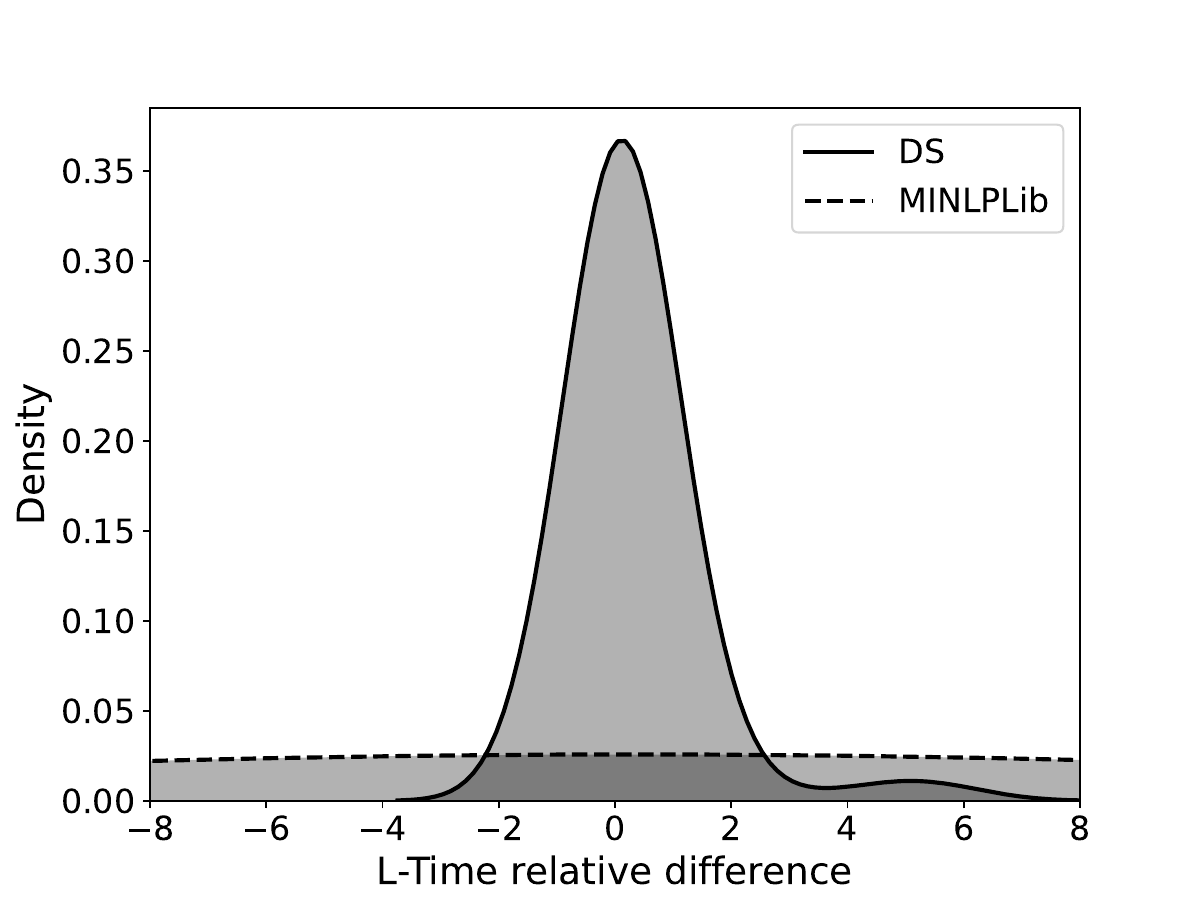}
        \caption{L-Time, \solver{Xpress}.}
    \end{subfigure}
    \caption{Relative differences in Nodes and L-Time between \RLTlooseB and \RLTtightB without bound tightening in continuous instances (solvers other than \solver{Gurobi}).}
    \label{fig:density_other_cont_no_bt}
\end{figure}

\subsection{Continuous instances with bound tightening}

\subsubsection{Main results}

\begin{table}[H]
\centering
\footnotesize
\renewcommand{\arraystretch}{0.88}
\begin{tabular}{|l|l|r|r|r|r|r|r|}
\hline
                       &            & \multicolumn{2}{c|}{DS}    & \multicolumn{2}{c|}{MINLPLib} & \multicolumn{2}{c|}{QPLIB} \\
\hline
                       &            & Instances & Variation (\%)       & Instances & Variation (\%)         & Instances & Variation (\%)       \\
\hline
\multirow{6}{*}{\solver{Gurobi}} 
	& Unsolved & 178 &	$+$18.75	& 186 & \textbf{$-$1.82}	& 66 &	$+$1.67 \\
	& Gap & 19 &	$+$26.09	& 43 &	\textbf{$-$7.30}	& 55 &	$+$1.59 \\
	& Time & 109 &	$+$34.80	&  46 &	$+$38.15	&  6 &	$+$3.37 \\
	& Pace & 125 &	$+$29.89	&  99 &	$+$10.51	&  66 &	\textbf{$-$1.87} \\
	& Nodes & 159 &	\textbf{$-$0.13}	&  130 & \textbf{$-$0.73}	&  5 &	\textbf{$-$11.40} \\
	& L-Time & 159 & $+$3.59	& 130 &	$+$1.04	&  5 &	\textbf{$-$8.29} \\
\hline
\multirow{6}{*}{\solver{Clp}} 
	& Unsolved & 174 & \textbf{$-$8.82}	& 187 &	$+$5.56	& 67 &	\phantom{$+$}0.00 \\
	& Gap & 35 &	\textbf{$-$44.39}	&  40 &	$+$10.60	& 53 &	\textbf{$-$0.35} \\
	& Time & 85 &	\textbf{$-$40.45}	&  38 &	$+$1.43	&  5 &	\textbf{$-$10.47} \\
	& Pace & 115 &	\textbf{$-$32.95}	&  92 &		\textbf{$-$20.55}	&  67 &	$+$0.98 \\
	& Nodes & 139 &	$+$0.60	&  130 &		\textbf{$-$0.30}	&  5 &	\textbf{$-$6.12} \\
	& L-Time & 139 &	\textbf{$-$34.36}	& 130 &	$+$0.71	&  5 &	\textbf{$-$4.63} \\
\hline
\multirow{6}{*}{\solver{Cplex}} 
	& Unsolved & 174 &	\phantom{$+$}0.00	& 184 &	$+$2.00	& 66 &	$+$1.67 \\
	& Gap & 16 &	$+$5.28	&  38 &	$+$1.36	&  53 &	\textbf{$-$0.70} \\
	& Time & 97 &	$+$2.19	&  39 &		$+$3.42	&  6 &		$+$1.21 \\
	& Pace & 113 &	$+$1.94	&  89 &		$+$2.13	&  66 &	$+$10.50 \\
	& Nodes & 158 &	\textbf{$-$0.29}	&  133 &		\textbf{$-$1.07}	&  5 &		\textbf{$-$8.71} \\
	& L-Time & 158 &	\textbf{$-$0.07}	&  133 &		\textbf{$-$0.27}	&  5 &	\textbf{$-$5.16} \\
\hline
\multirow{6}{*}{\solver{Glop}} 
	& Unsolved & 176 &	\textbf{$-$21.21}	& 187 &	$+$7.55	& 67 &		\phantom{$+$}0.00 \\
	& Gap & 36 &	\textbf{$-$58.34}	&  39 &	$+$47.32	&  53 &		\textbf{$-$4.71} \\
	& Time & 95 &	\textbf{$-$41.18}	&  42 &		$+$51.60	&  5 &		\textbf{$-$20.23} \\
	& Pace & 118 &	\textbf{$-$29.46}	&  95 &		$+$14.92	&  67 &		\textbf{$-$0.28} \\
	& Nodes & 140 &	$+$1.95	&  130 &		$+$3.42	&  3 &		\textbf{$-$23.89} \\
	& L-Time & 140 &	\textbf{$-$45.59}	&  130 &		\textbf{$-$14.13}	& L3 &	\textbf{$-$15.37} \\
\hline
\multirow{6}{*}{\solver{Xpress}} 
	& Unsolved & 179 &	$+$3.03	& 183 &	$+$1.47	& 65 &	\phantom{$+$}0.00 \\
	& Gap & 34 &	$+$18.80	&  55 &	$+$4.48	&  56 &		$+$0.43 \\
	& Time & 141 &	\textbf{$-$5.28}	&  89 &		\textbf{$-$5.53}	&  2 &		\textbf{$-$8.02} \\
	& Pace & 174 &	\textbf{$-$4.08}	&  155 &		\textbf{$-$9.02}	&  65 &		\textbf{$-$2.72} \\
	& Nodes & 145 &	$+$0.09	&  112 &		\textbf{$-$0.22}	&  2 &		\textbf{$-$11.47} \\
	& L-Time & 145 &	$+$2.85	&  112 &	\textbf{$-$1.29}	&  2 &	$+$0.79 \\
\hline 
\end{tabular}
\caption{Results with bound tightening in continuous instances.}
\label{tab:withbt}
\end{table}

\subsubsection{Densities for Nodes and L-Time. Results for \solver{Gurobi} solver}

\begin{figure}[H]
    \centering
    \begin{subfigure}{0.4\textwidth}
        \includegraphics[trim=0 0 0 28, clip, width=\textwidth]{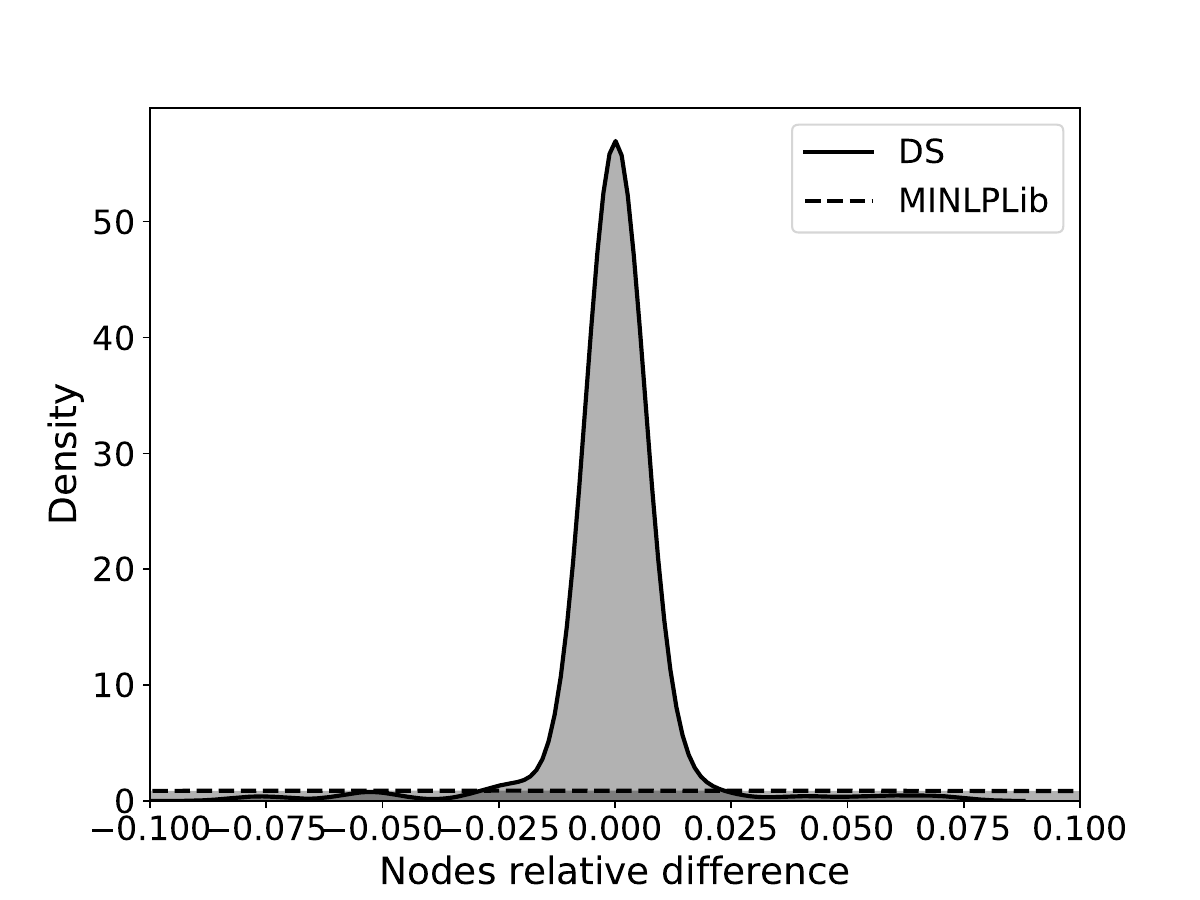}
        \caption{Nodes, \solver{Gurobi}.}
    \end{subfigure}    
    \begin{subfigure}{0.4\textwidth}
        \includegraphics[trim=0 0 0 28, clip, width=\textwidth]{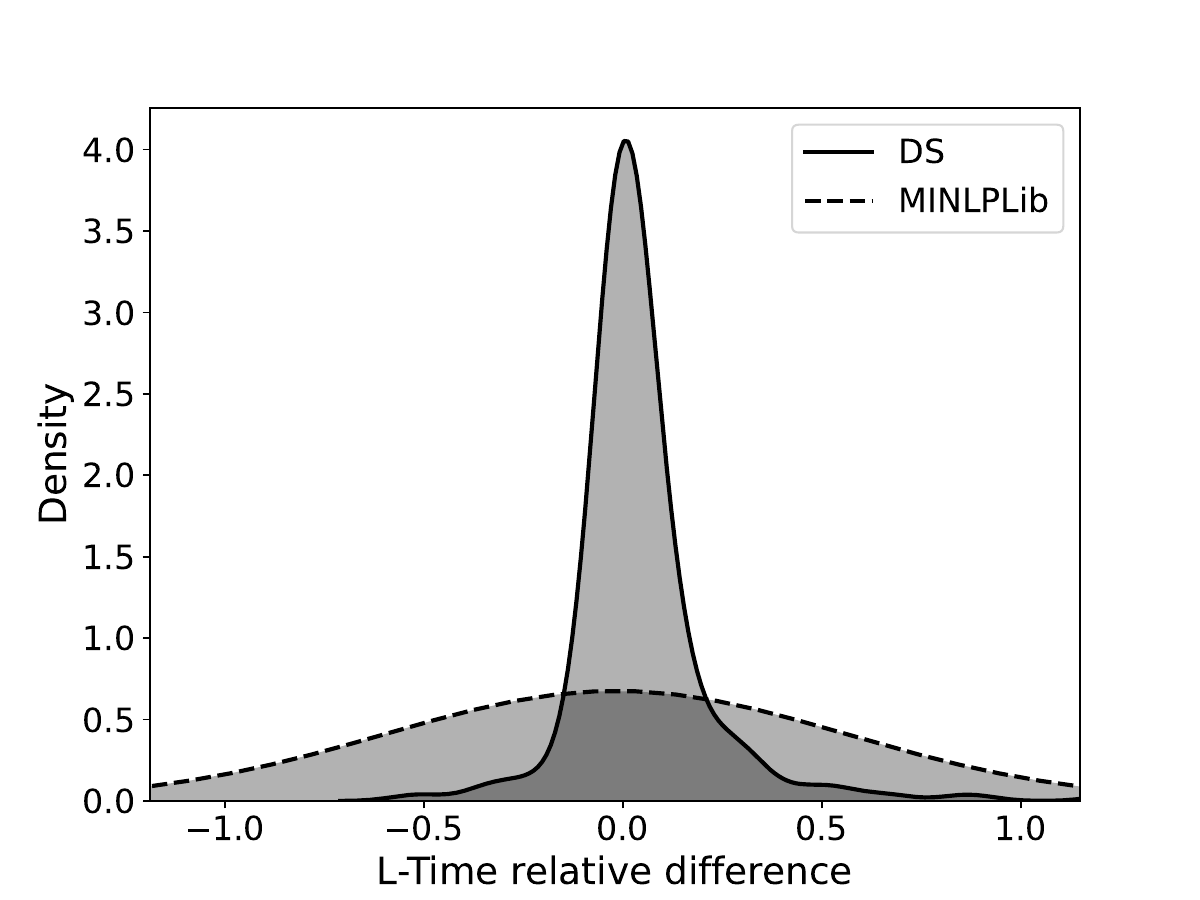}
        \caption{L-Time, \solver{Gurobi}.}
    \end{subfigure}    
    \caption{Relative differences in Nodes and L-Time between \RLTlooseB and \RLTtightB with bound tightening in continuous instances (\solver{Gurobi} solver).}
\end{figure}

\subsubsection{Densities for Nodes and L-Time. Results for the rest of the solvers}

\begin{figure}[H]
    \centering
    \begin{subfigure}{0.4\textwidth}
        \includegraphics[trim=0 0 0 28, clip, width=\textwidth]{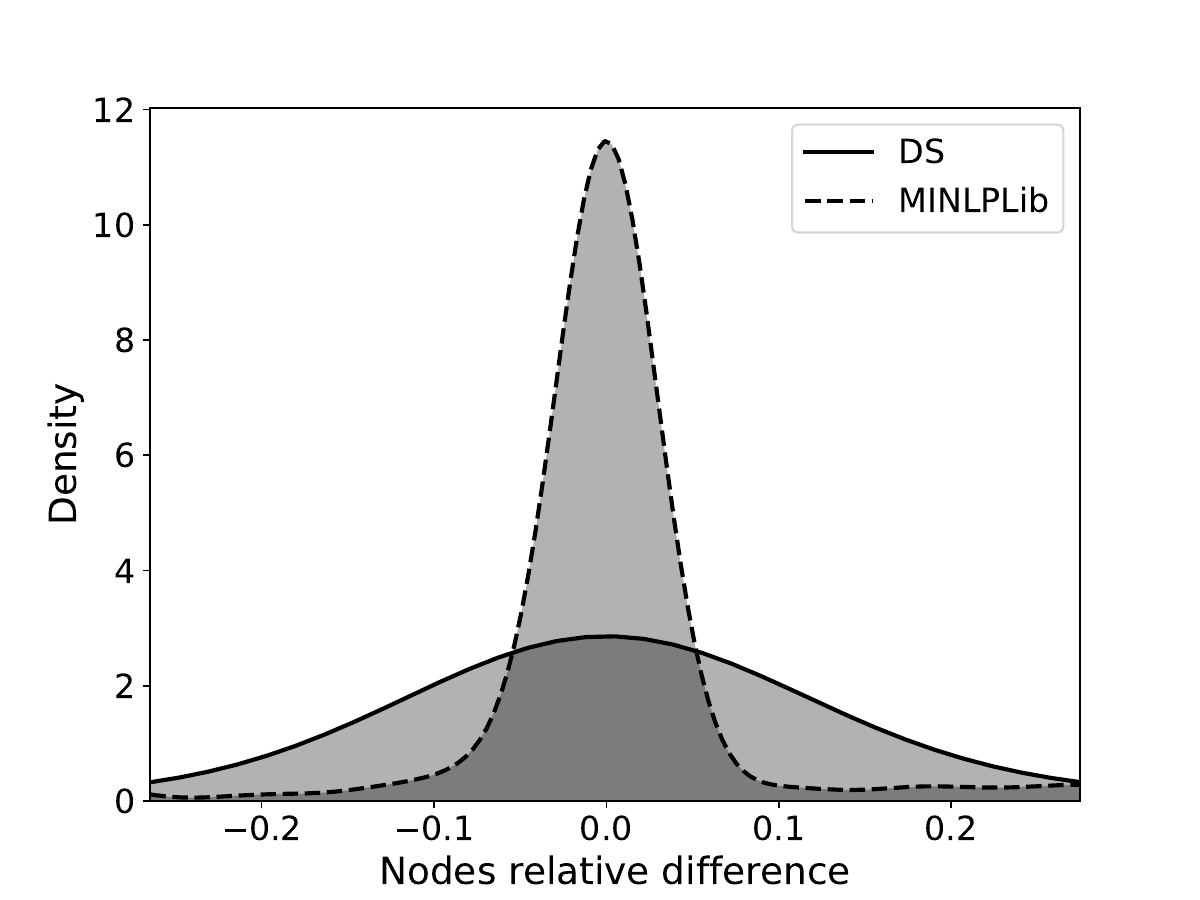}
        \caption{Nodes, \solver{Clp}.}
    \end{subfigure}    
    \begin{subfigure}{0.4\textwidth}
        \includegraphics[trim=0 0 0 28, clip, width=\textwidth]{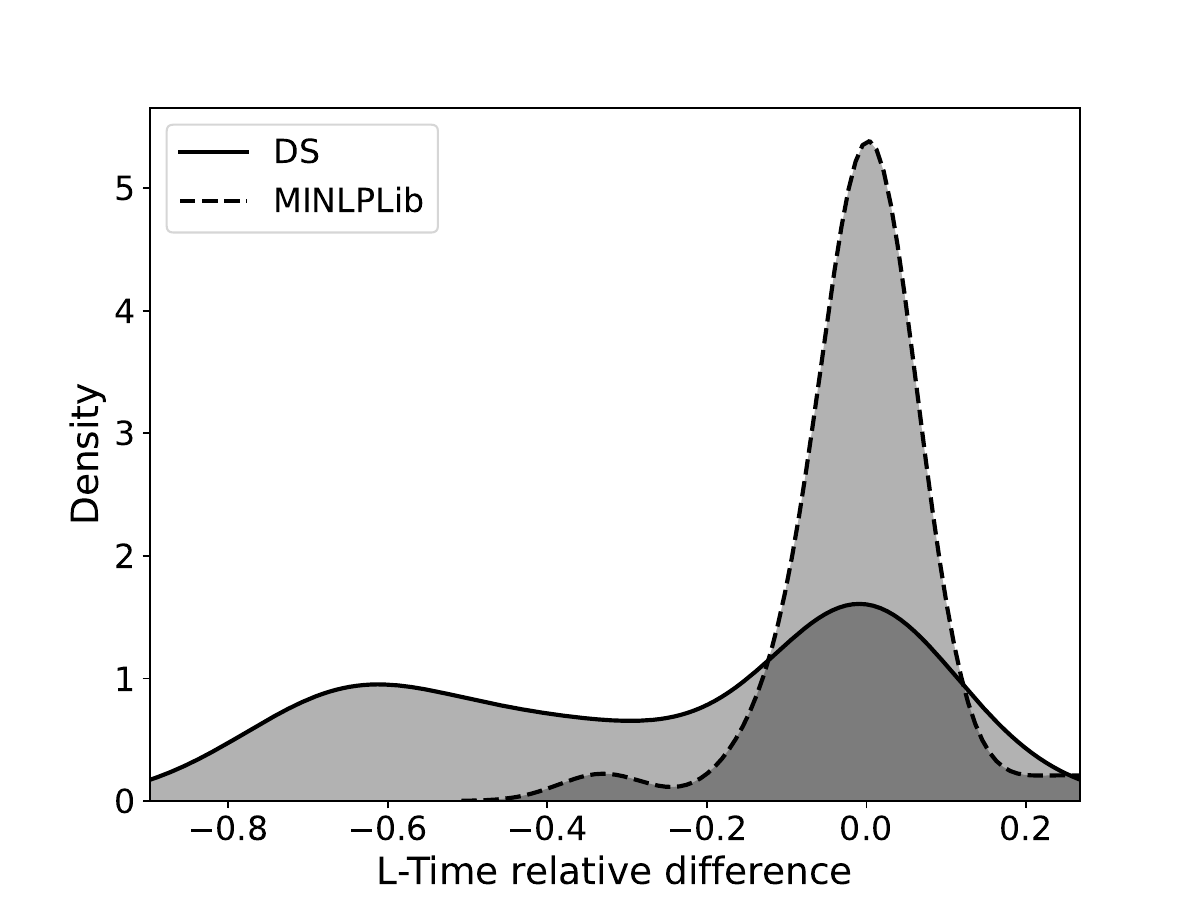}
        \caption{L-Time, \solver{Clp}.}
    \end{subfigure}    
    
    \begin{subfigure}{0.4\textwidth}
        \includegraphics[trim=0 0 0 28, clip, width=\textwidth]{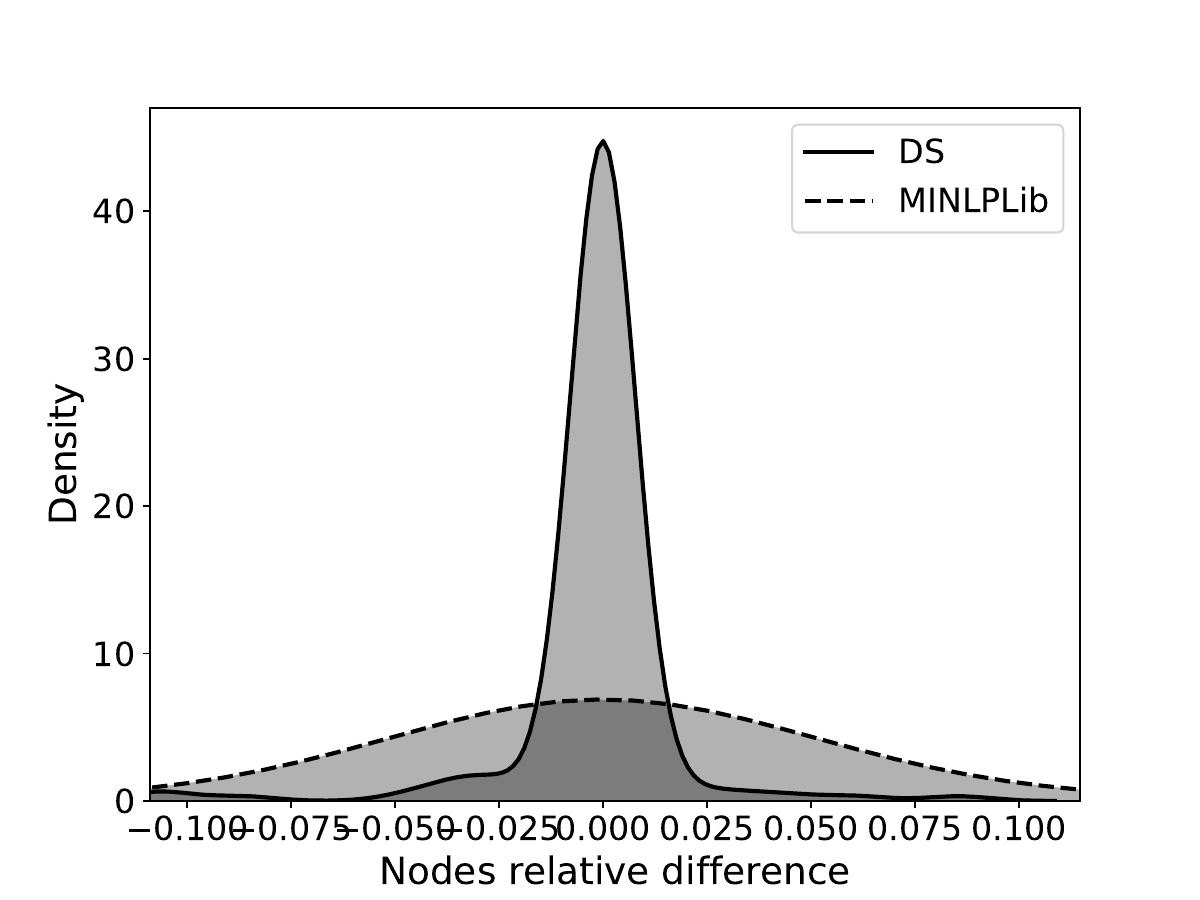}
        \caption{Nodes, \solver{Cplex}.}
    \end{subfigure}    
    \begin{subfigure}{0.4\textwidth}
        \includegraphics[trim=0 0 0 28, clip, width=\textwidth]{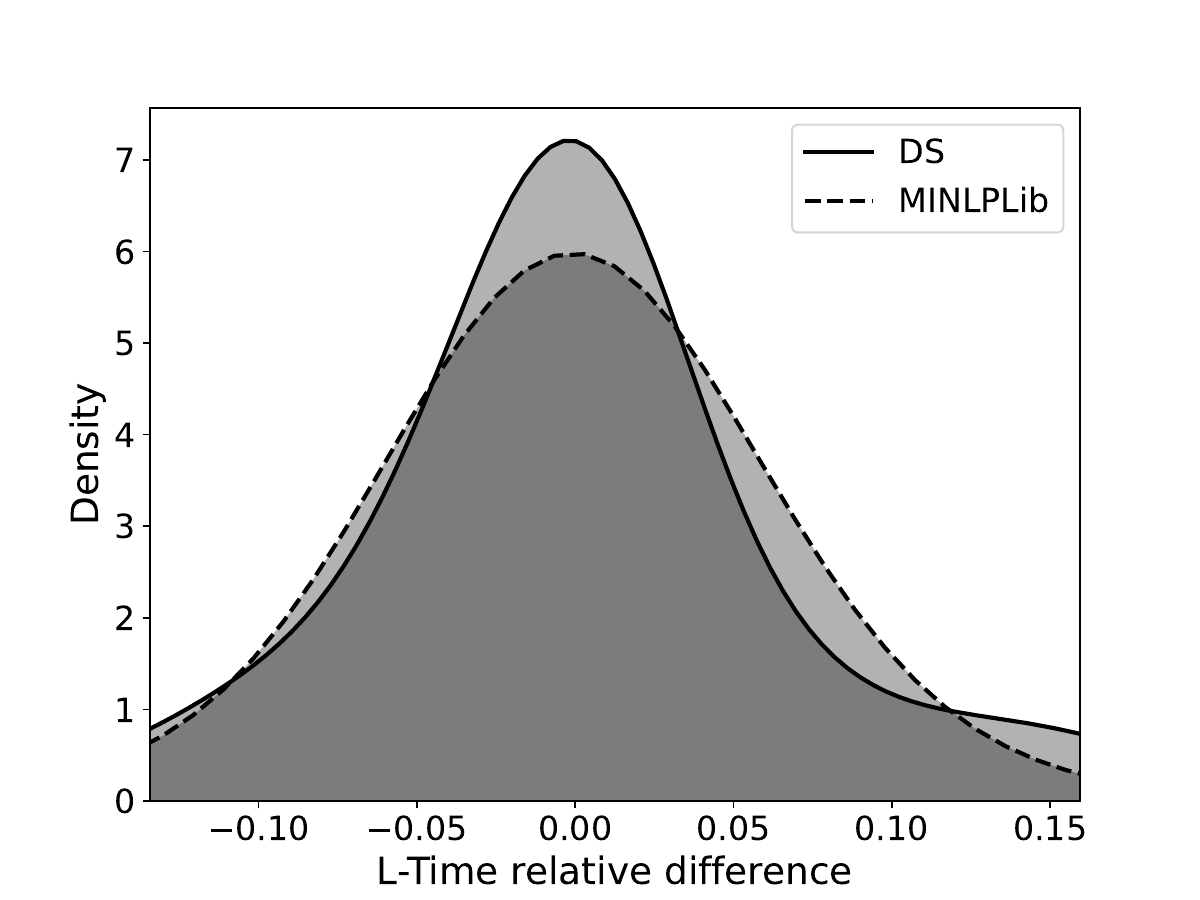}
        \caption{L-Time, \solver{Cplex}.}
    \end{subfigure}    
    
    \begin{subfigure}{0.4\textwidth}
        \includegraphics[trim=0 0 0 28, clip, width=\textwidth]{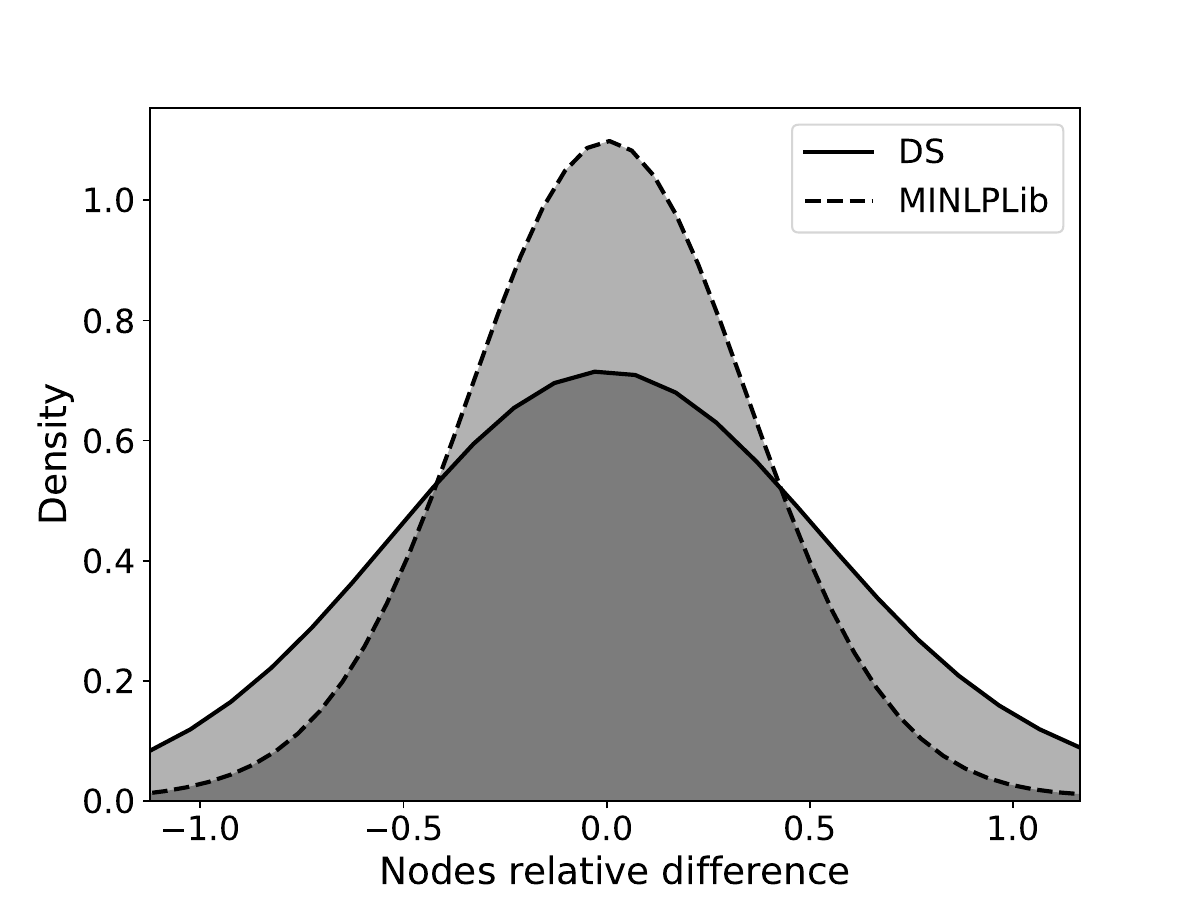}
        \caption{Nodes, \solver{Glop}.}
    \end{subfigure}    
    \begin{subfigure}{0.4\textwidth}
        \includegraphics[trim=0 0 0 28, clip, width=\textwidth]{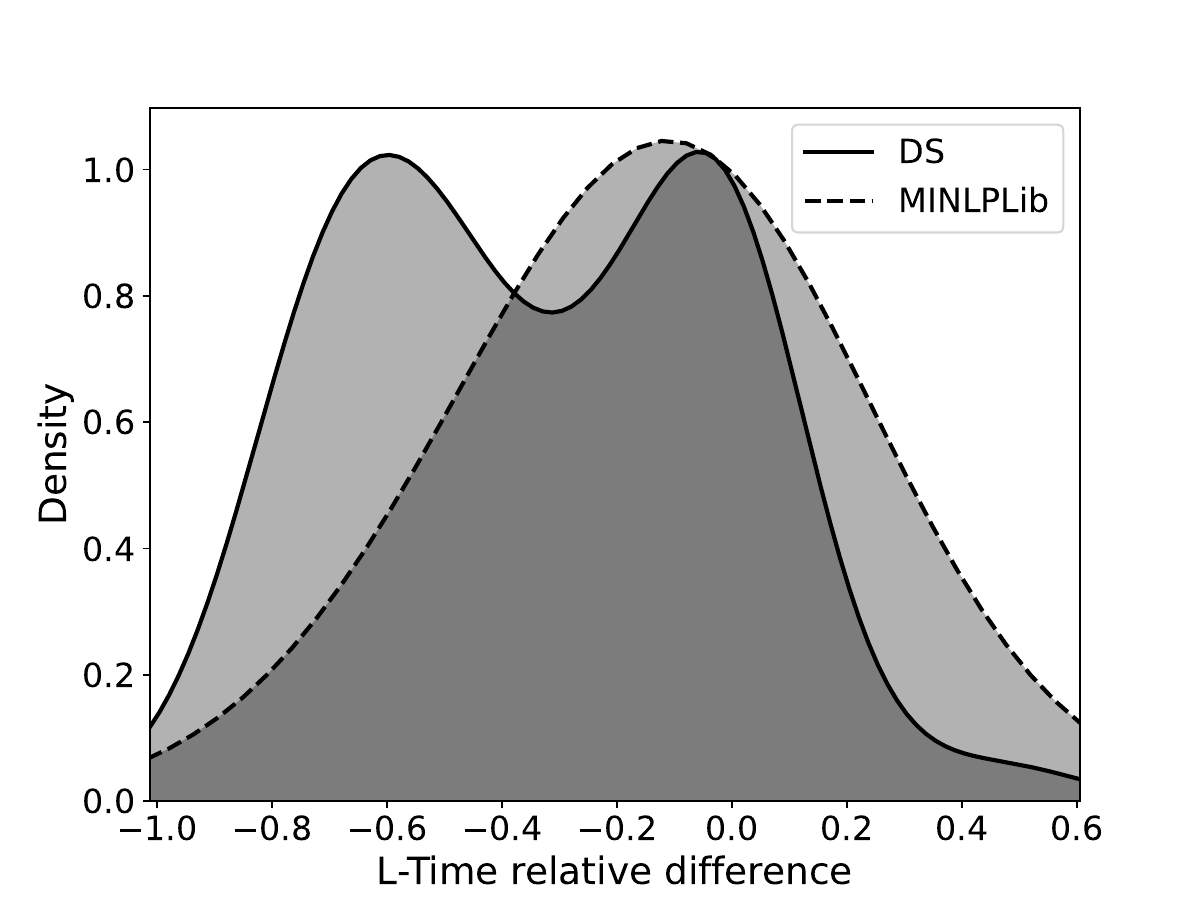}
        \caption{L-Time, \solver{Glop}.}
    \end{subfigure}    
    
    \begin{subfigure}{0.4\textwidth}
        \includegraphics[trim=0 0 0 28, clip, width=\textwidth]{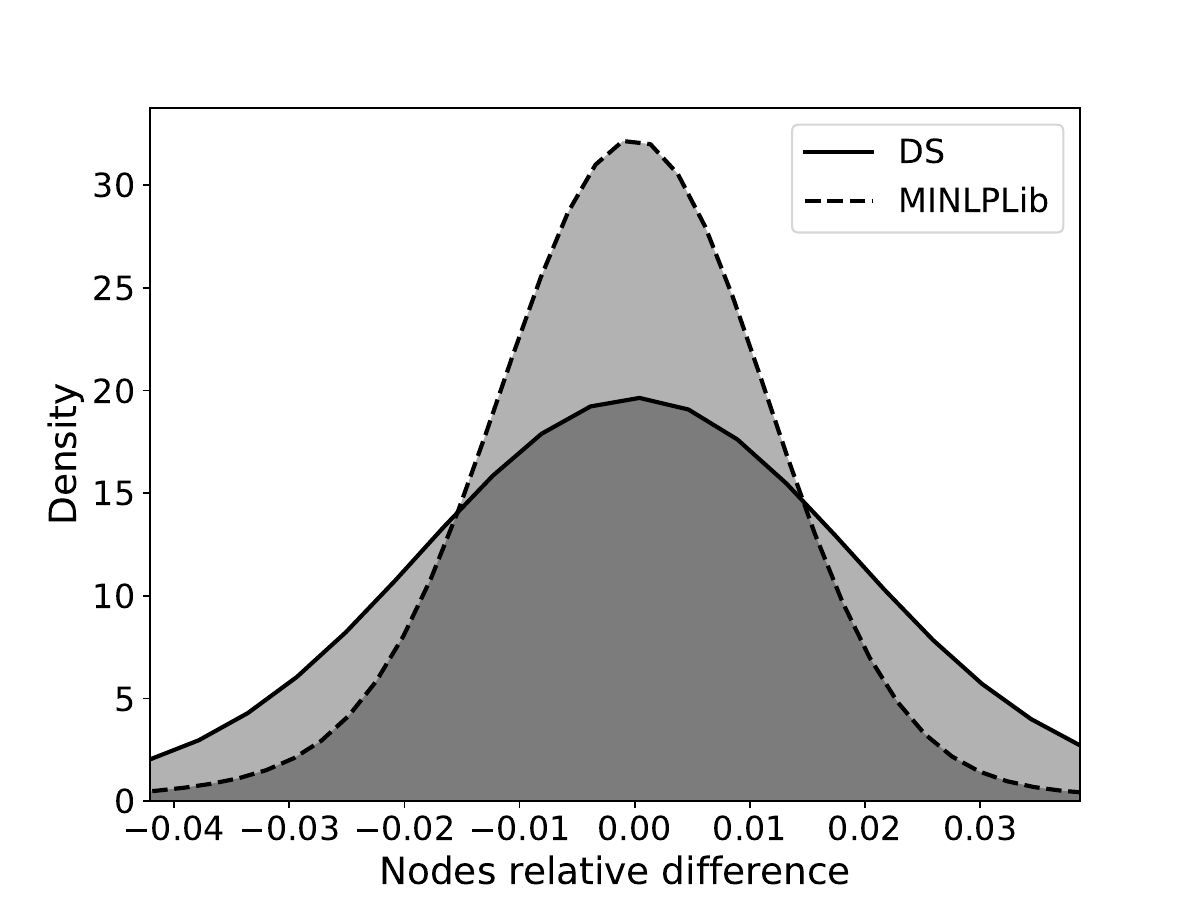}
        \caption{Nodes, \solver{Xpress}.}
    \end{subfigure}    
    \begin{subfigure}{0.4\textwidth}
        \includegraphics[trim=0 0 0 28, clip, width=\textwidth]{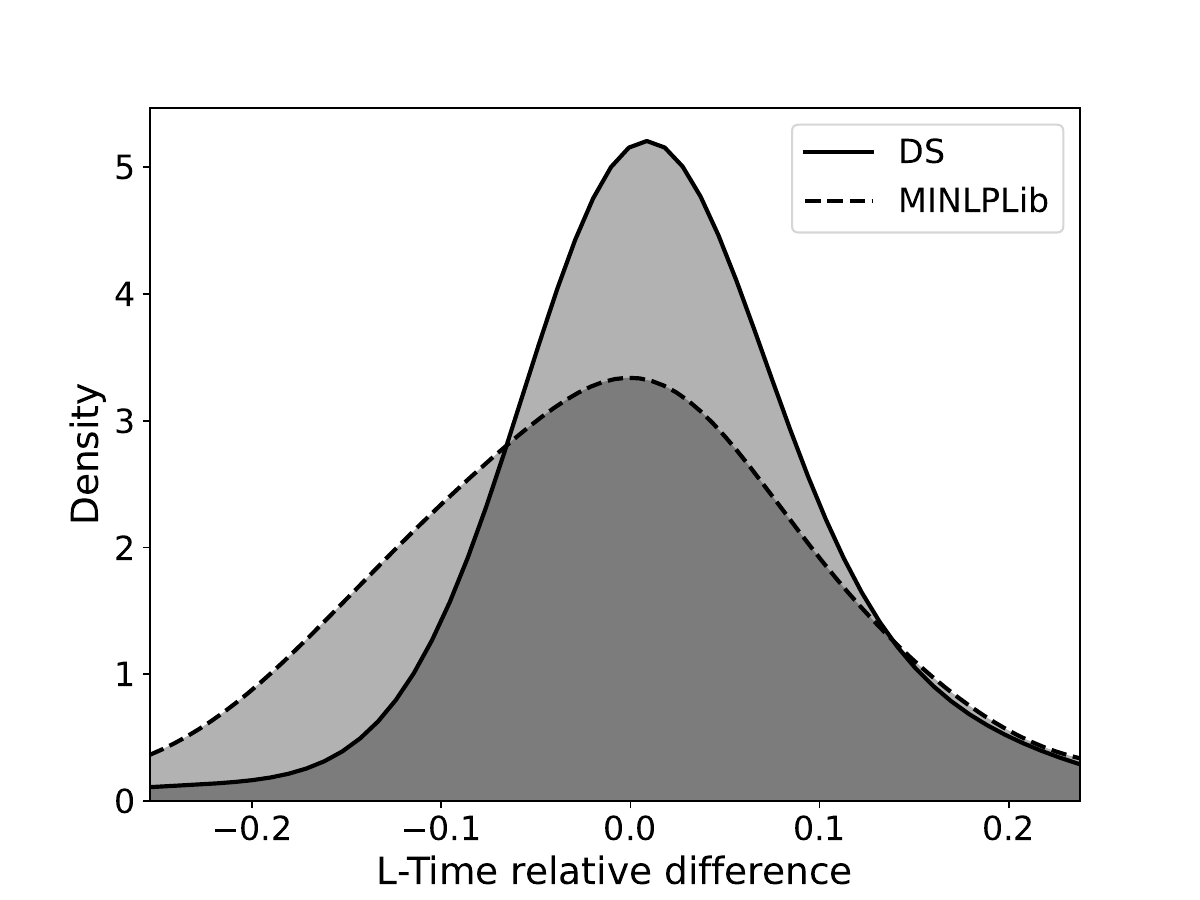}
        \caption{L-Time, \solver{Xpress}.}
    \end{subfigure}
    
    \caption{Relative differences in Nodes and L-Time between \RLTlooseB and \RLTtightB with bound tightening in continuous instances (solvers other than \solver{Gurobi}).}
\end{figure}

\subsection{Mixed-integer instances without bound tightening}

\subsubsection{Main results}

\begin{table}[H]
\centering
\footnotesize
\renewcommand{\arraystretch}{0.88}
\begin{tabular}{|l|l|r|r|r|r|r|r|}
\hline
                       &            & \multicolumn{2}{c|}{MINLPLib-integer}& \multicolumn{2}{c|}{QPLIB-integer} \\
\hline
                       &            & Instances & Variation (\%)        & Instances & Variation (\%)       \\
\hline
\multirow{6}{*}{\solver{Gurobi}} 
	& Unsolved & 212 &	+1.52	& 141 &	\textbf{$-$1.61} \\
	& Gap & 60 &	+18.10	&  57 &		\textbf{$-$2.90}\\
	& Time & 71 &	+23.97	&  77 &		+4.26\\
	& Pace & 137 &	+10.38	&  138 &		+2.49\\
	& Nodes & 145 &	\textbf{$-$1.07}	&  79 &	+0.45\\
	& L-Time & 145 &	+291.44 & 79 &		+9.80\\
\hline
\multirow{6}{*}{\solver{Cbc}} 
	& Unsolved & 209 &		\phantom{$+$}0.00	& 135 &	+0.88\\
	& Gap & 91 &	+2.19	&  110 &		+9.00\\
	& Time & 60 &		+8.05	&  22 &		+1.74\\
	& Pace & 159 &		\textbf{$-$13.44}	&  135 &		\textbf{$-$15.21}\\
	& Nodes & 108 &	\textbf{$-$4.88}	&  19 &		+1.29\\
	& L-Time & 108 &		+11.28	& 19 &		\textbf{$-$3.82}\\
\hline
\multirow{6}{*}{\solver{Cplex}} 
	& Unsolved & 211 &	\phantom{$+$}0.00	& 141 &	\textbf{$-$1.47}\\
	& Gap & 62 &		+4.23	&  63 &		\textbf{$-$5.69}\\
	& Time & 68 &		\textbf{$-$4.83}	&  69 &	\textbf{$-$2.77}\\
	& Pace & 137 &		\textbf{$-$6.71}	&  136 &		\textbf{$-$1.34}\\
	& Nodes & 142 &		\textbf{$-$3.81}	&  73 &		\textbf{$-$4.59}\\
	& L-Time & 142 &		+2.23	& 73 &		+3.49\\
\hline
\multirow{6}{*}{\solver{Xpress}} 
	& Unsolved & 212 &		\phantom{$+$}0.00	& 141 &		\phantom{$+$}0.00\\
	& Gap & 74 &		\textbf{$-$7.00}	&  72 &		\textbf{$-$4.08}\\
	& Time & 82 &		+11.86	&  62 &		\textbf{$-$1.26}\\
	& Pace & 163 &	+4.82	&  141 &		+0.27\\
	& Nodes & 129 &	+6.92	&  62 &		+0.18\\
	& L-Time & 129 &		\textbf{$-$4.08}	& 62 &	\textbf{$-$1.74}\\
\hline 
\end{tabular}
\caption{Results without bound tightening in mixed-integer instances.}
\label{tab:withoutbtmixedinteger}
\end{table}

\subsubsection{Densities for Nodes and L-Time. Results for \solver{Gurobi} solver}

\begin{figure}[H]
    \centering
    \begin{subfigure}{0.4\textwidth}
        \includegraphics[trim=0 0 0 28, clip, width=\textwidth]{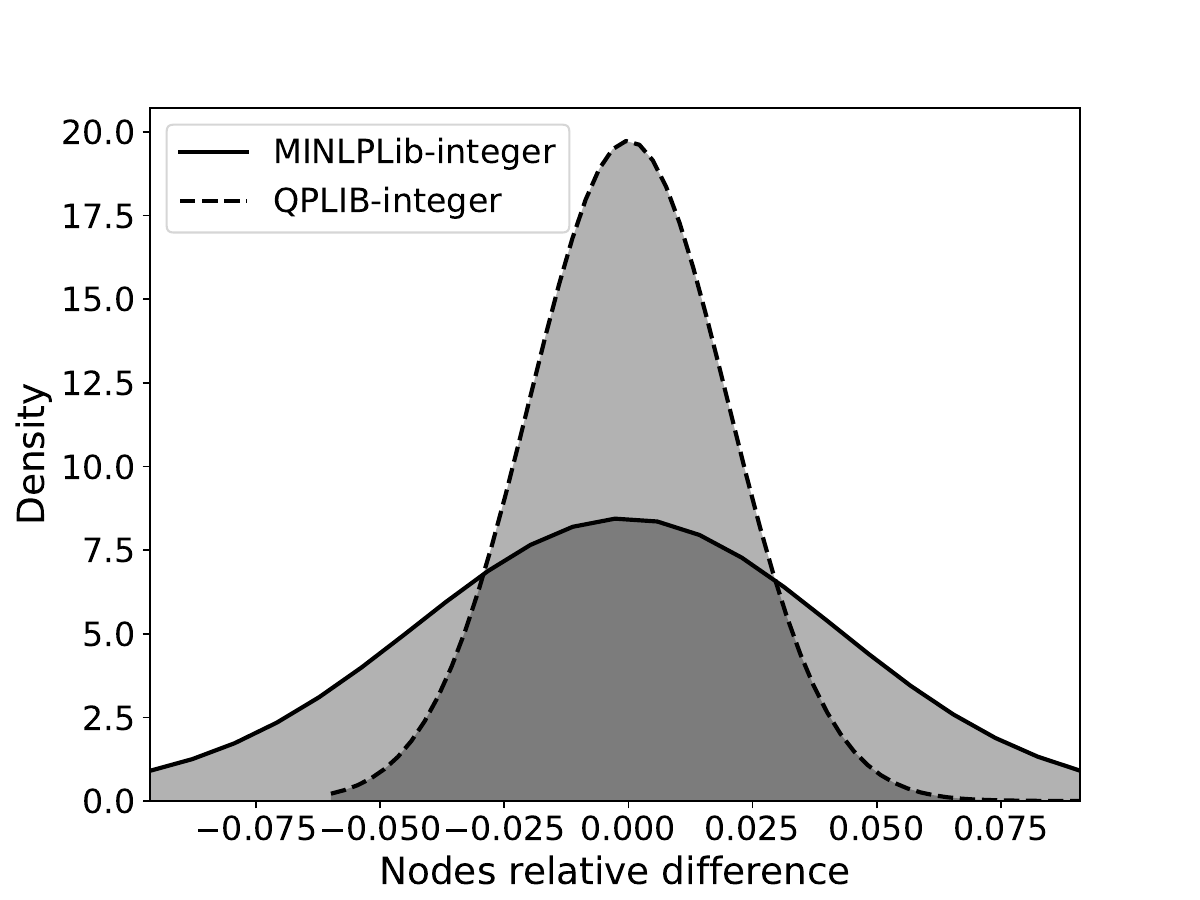}
        \caption{Nodes, \solver{Gurobi}.}
    \end{subfigure}    
    \begin{subfigure}{0.4\textwidth}
        \includegraphics[trim=0 0 0 28, clip, width=\textwidth]{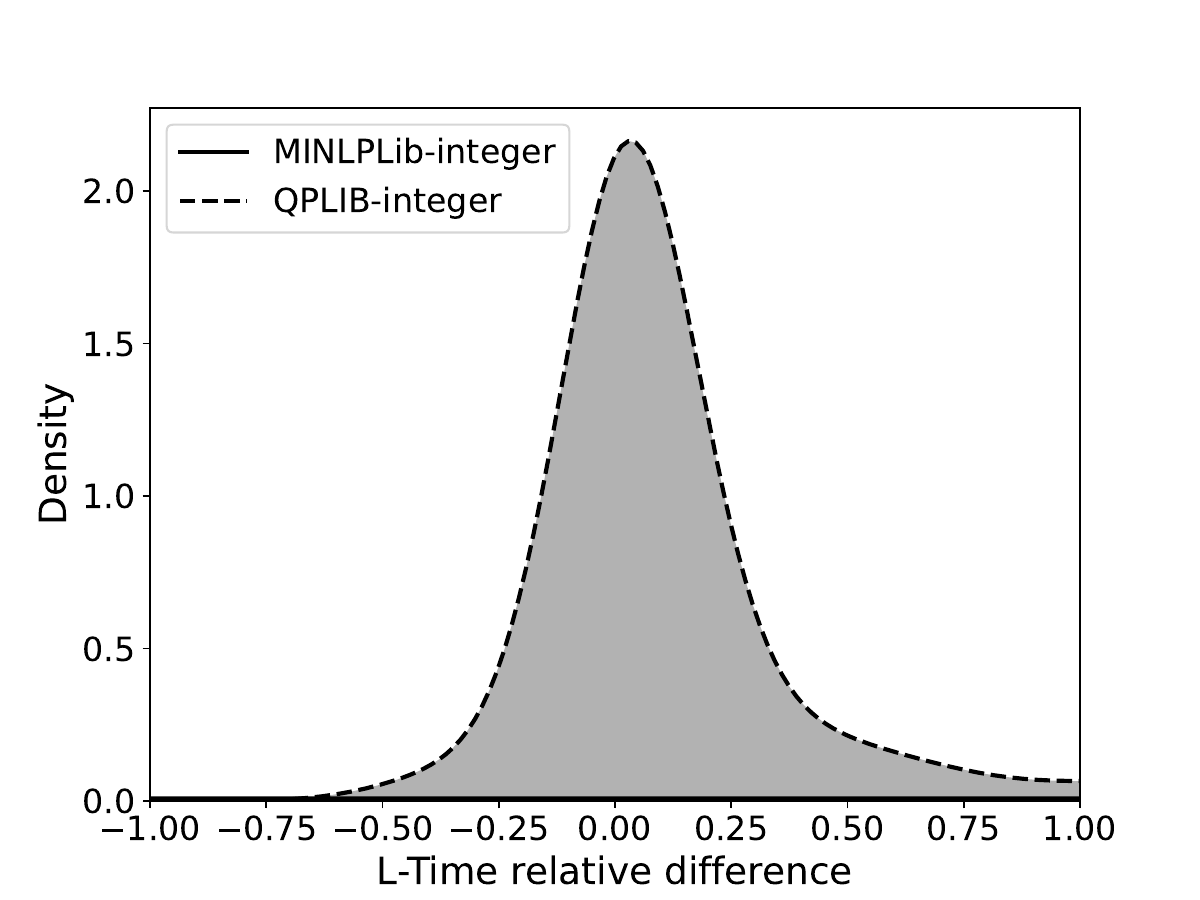}
        \caption{L-Time, \solver{Gurobi}.}
    \end{subfigure}    
    \caption{Relative differences in Nodes and L-Time between \RLTlooseB and \RLTtightB without bound tightening in mixed-integer instances (\solver{Gurobi} solver).}
\end{figure}

\subsubsection{Densities for Nodes and L-Time. Results for the rest of the solvers}

\begin{figure}[H]
    \centering
    \begin{subfigure}{0.4\textwidth}
        \includegraphics[trim=0 0 0 28, clip, width=\textwidth]{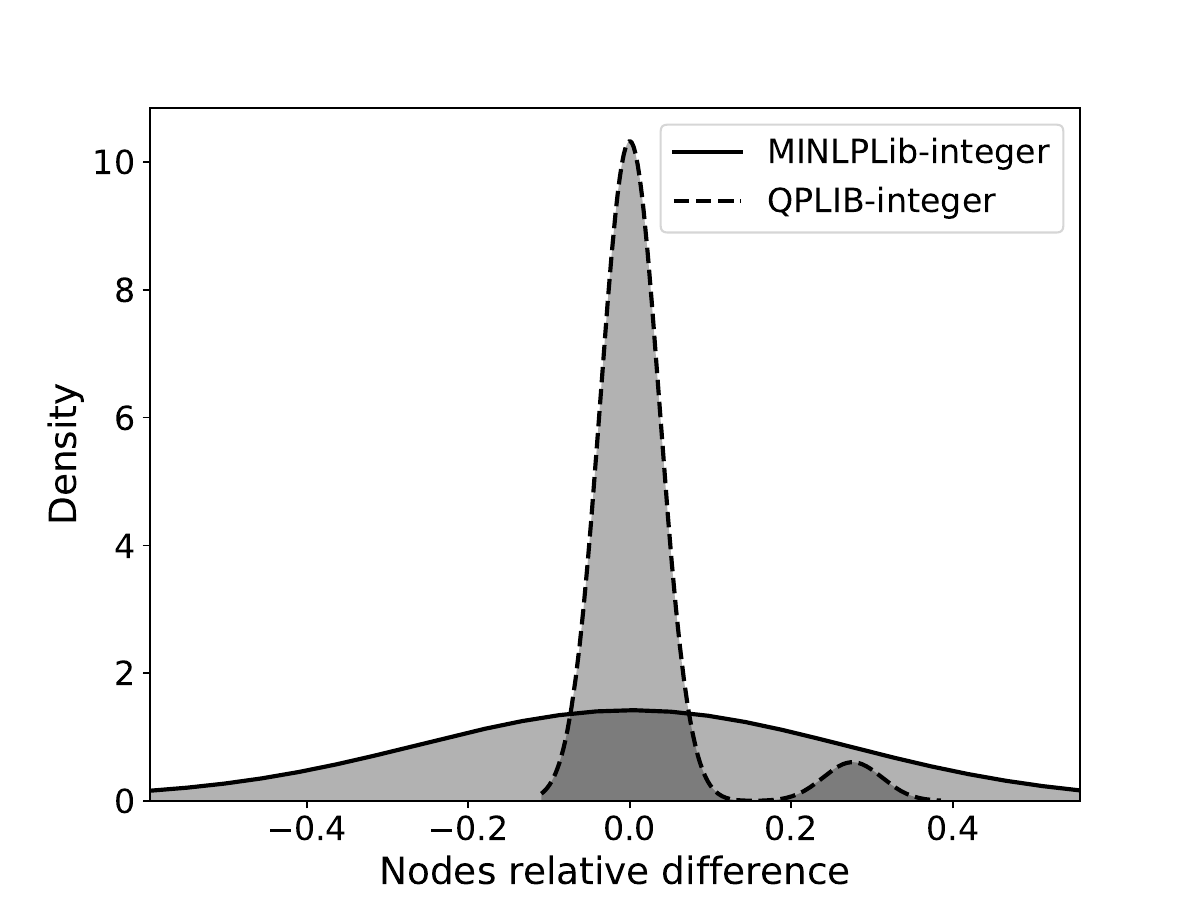}
        \caption{Nodes, \solver{Cbc}.}
    \end{subfigure}    
    \begin{subfigure}{0.4\textwidth}
        \includegraphics[trim=0 0 0 28, clip, width=\textwidth]{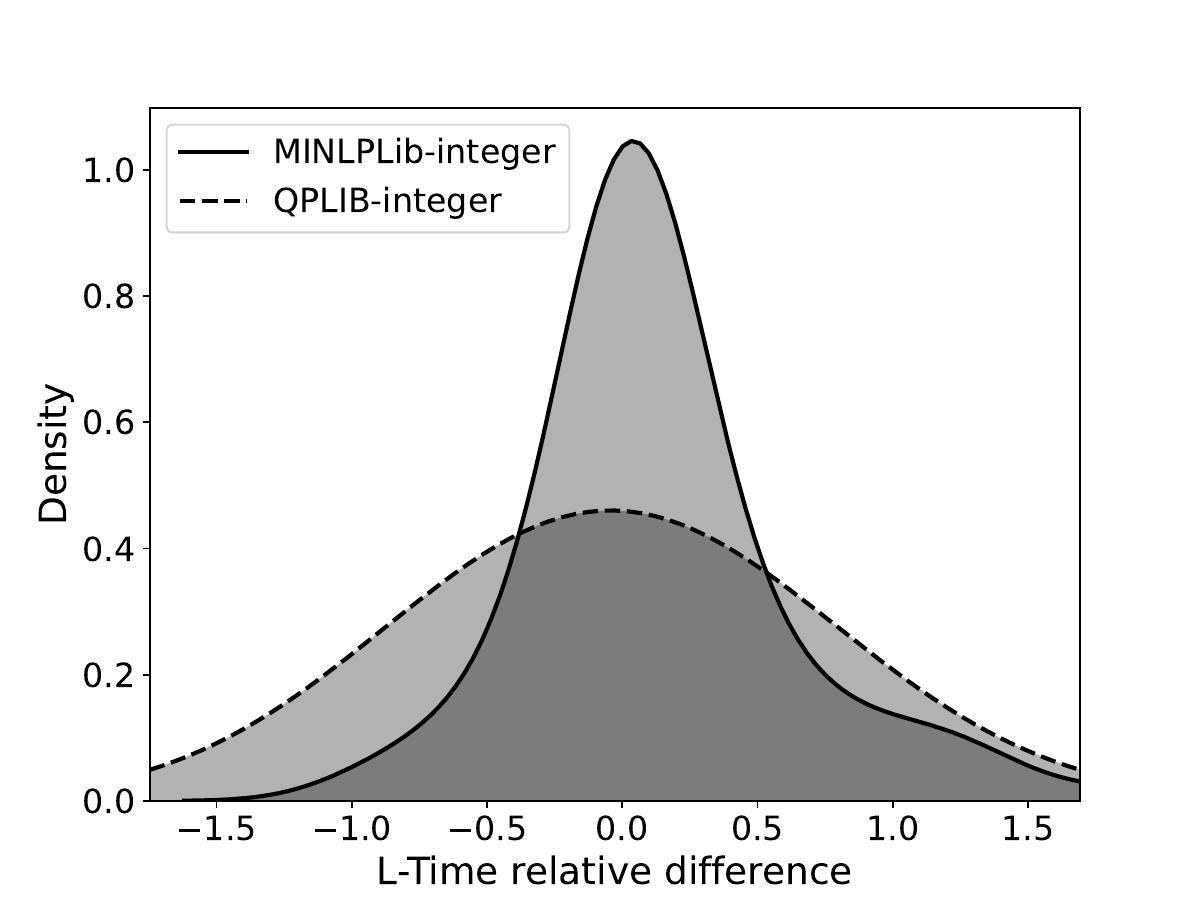}
        \caption{L-Time, \solver{Cbc}.}
    \end{subfigure}    
    
    \begin{subfigure}{0.4\textwidth}
        \includegraphics[trim=0 0 0 28, clip, width=\textwidth]{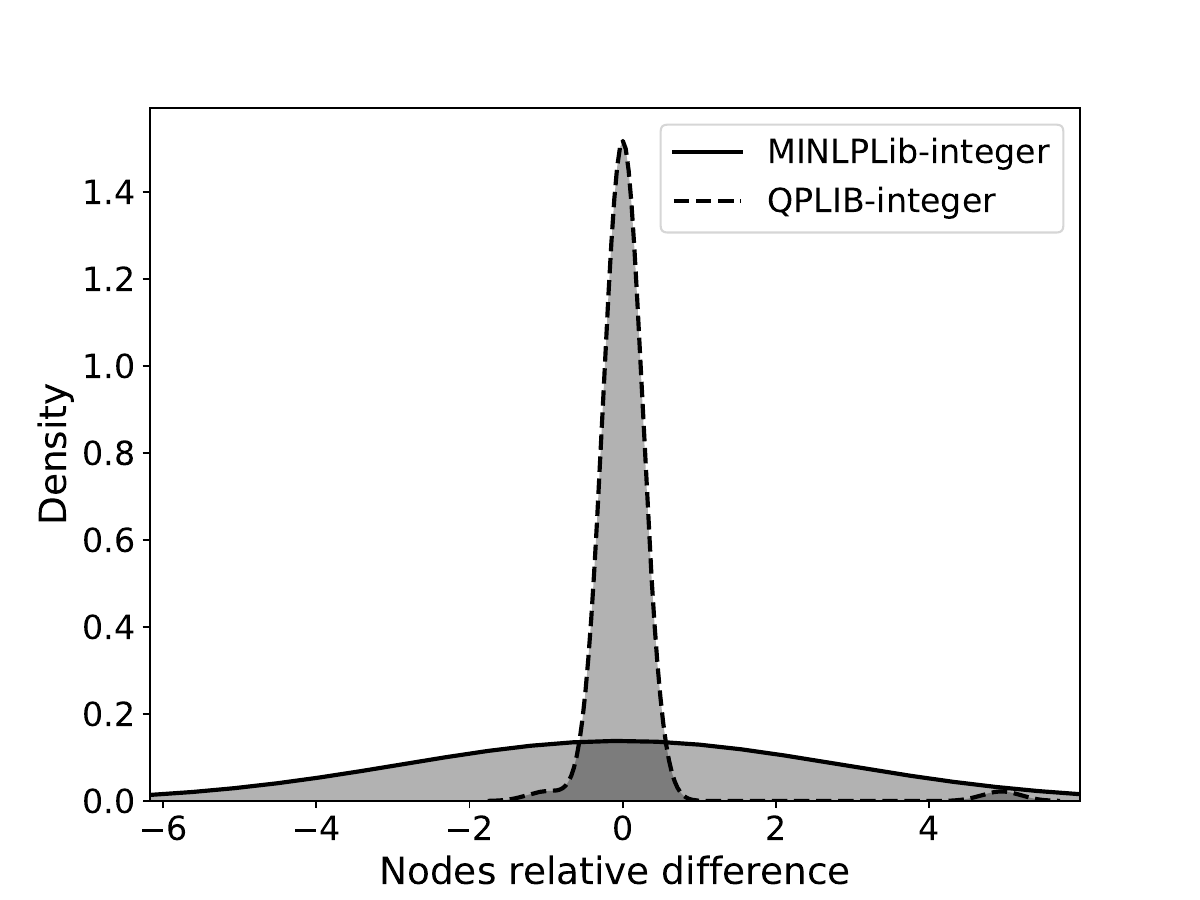}
        \caption{Nodes, \solver{Cplex}.}
    \end{subfigure}    
    \begin{subfigure}{0.4\textwidth}
        \includegraphics[trim=0 0 0 28, clip, width=\textwidth]{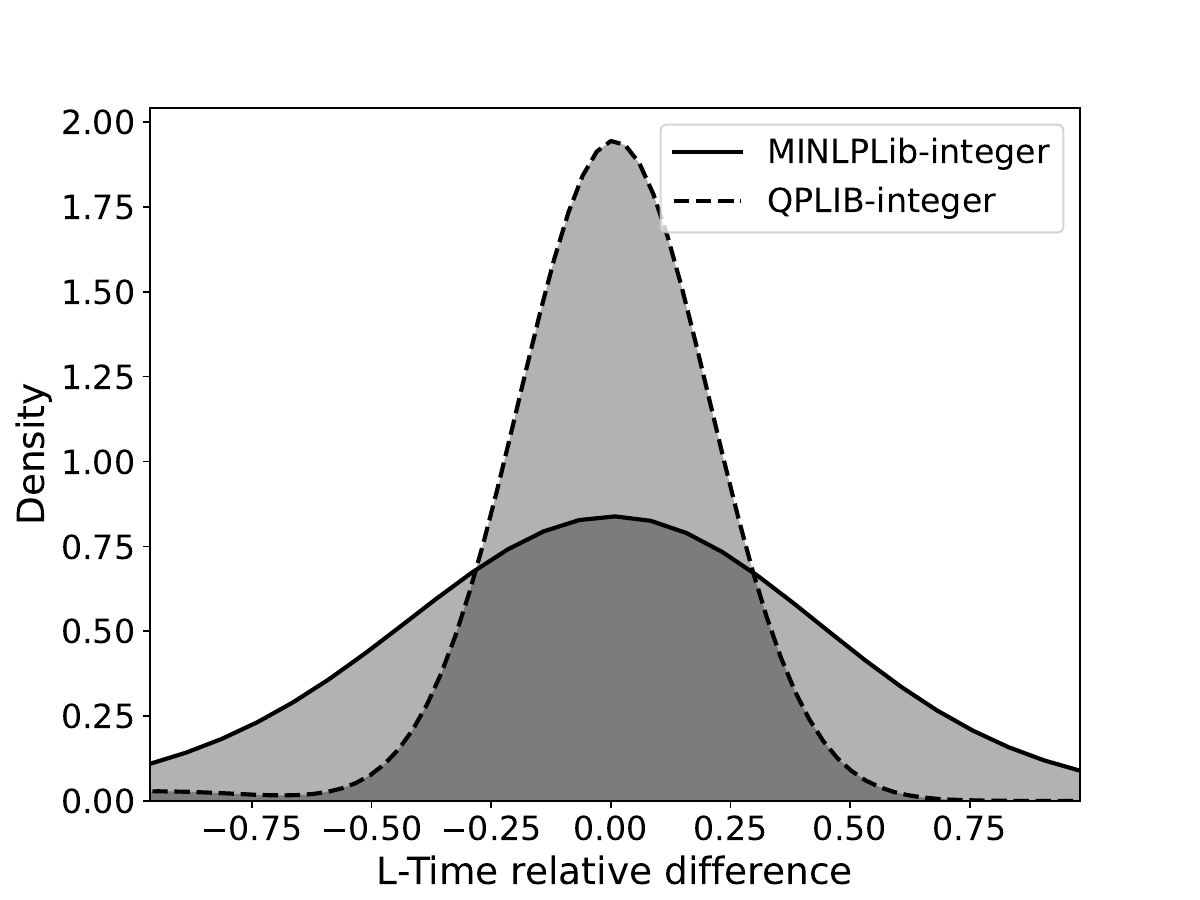}
        \caption{L-Time, \solver{Cplex}.}
    \end{subfigure}    
    
    \begin{subfigure}{0.4\textwidth}
        \includegraphics[trim=0 0 0 28, clip, width=\textwidth]{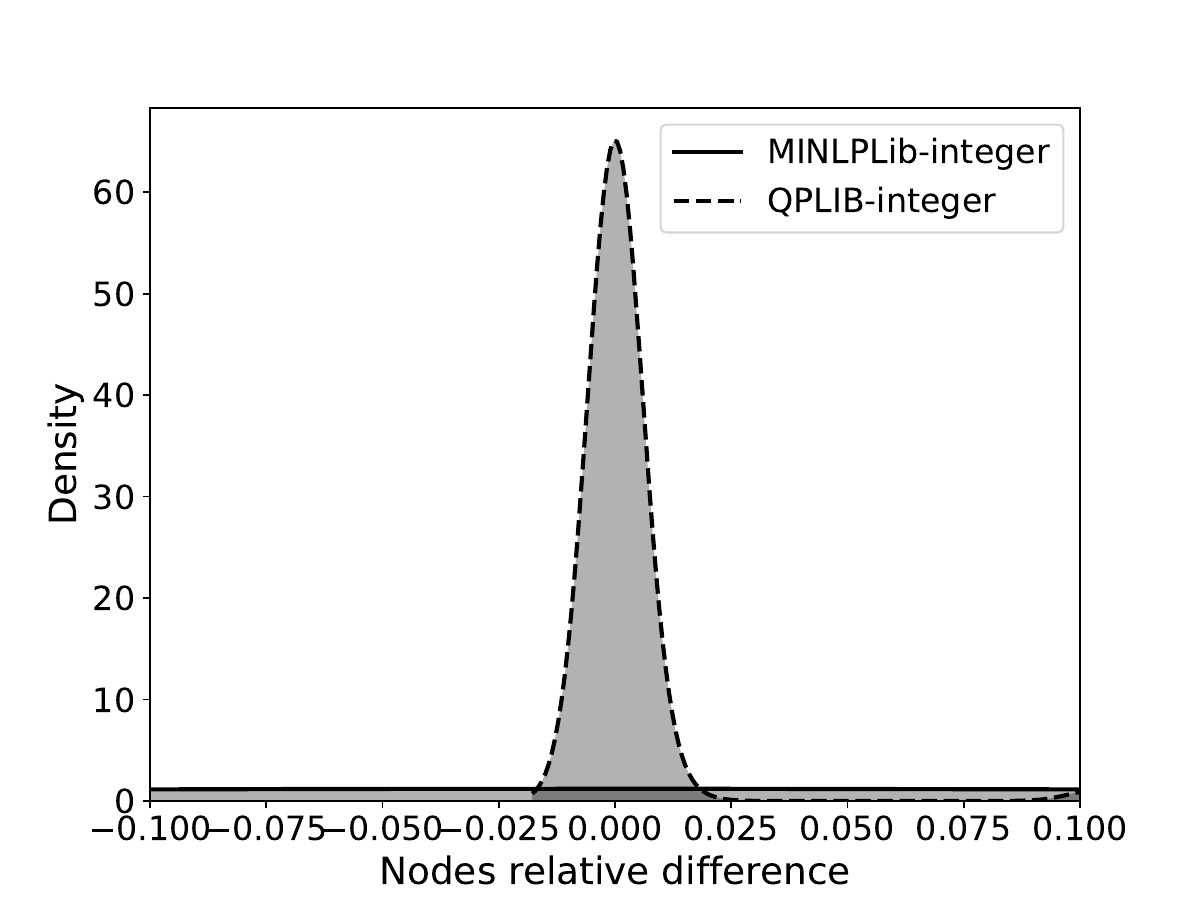}
        \caption{Nodes, \solver{Xpress}.}
    \end{subfigure}    
    \begin{subfigure}{0.4\textwidth}
        \includegraphics[trim=0 0 0 28, clip, width=\textwidth]{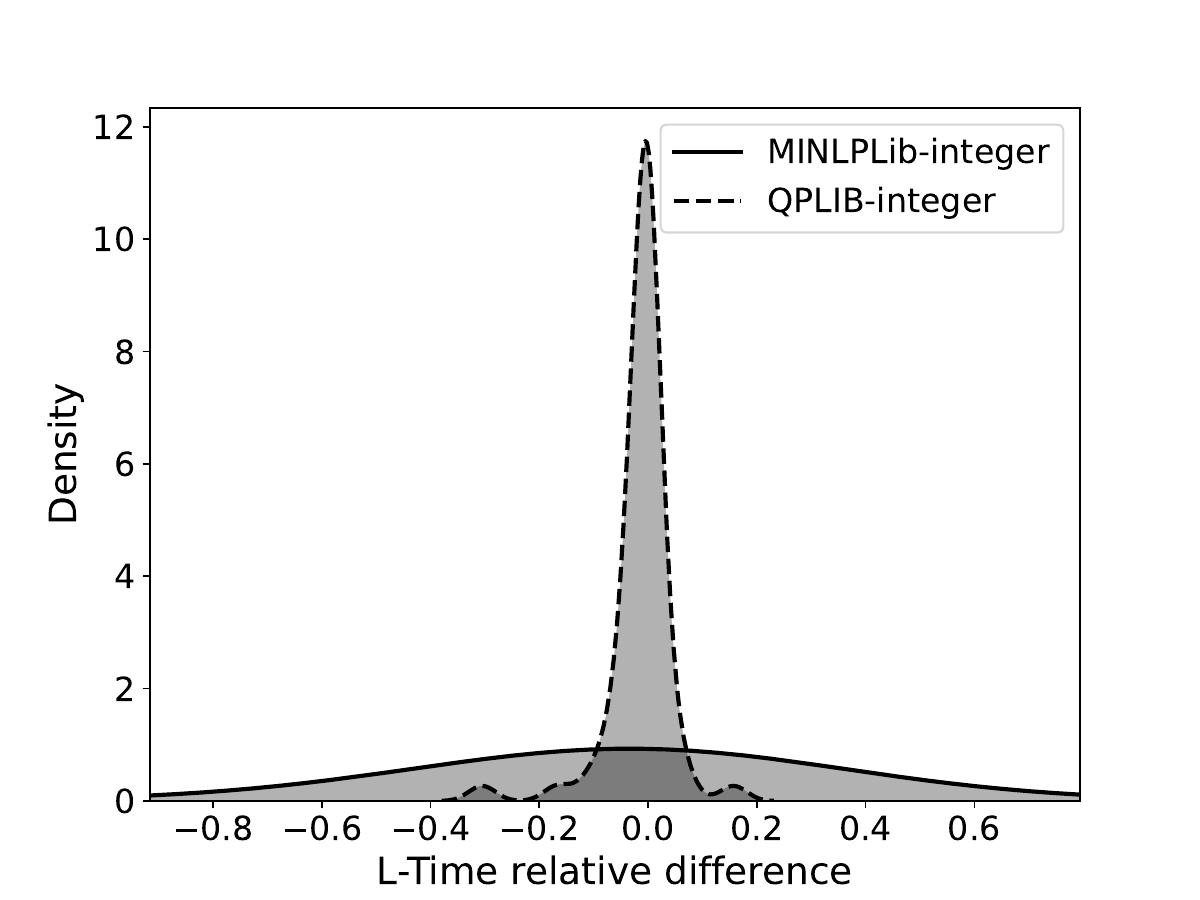}
        \caption{L-Time, \solver{Xpress}.}
    \end{subfigure}
    
    \caption{Relative differences in Nodes and L-Time between \RLTlooseB and \RLTtightB without bound tightening in mixed-integer instances (solvers other than \solver{Gurobi}).}
\end{figure}

\subsection{Mixed-integer instances with bound tightening}

\subsubsection{Main results}

\begin{table}[H]
\centering
\footnotesize
\renewcommand{\arraystretch}{0.88}
\begin{tabular}{|l|l|r|r|r|r|r|r|}
\hline
                       &            & \multicolumn{2}{c|}{MINLPLib-integer}& \multicolumn{2}{c|}{QPLIB-integer} \\
\hline
                       &            & Instances & Variation (\%)        & Instances & Variation (\%)       \\
\hline
\multirow{6}{*}{\solver{Gurobi}} 
	& Unsolved & 213 & +1.54	& 141 &	\phantom{$+$}0.00\\
	& Gap & 59 &	+16.88	&  56 &		+2.01\\
	& Time & 75 &		+11.15	& 81 &		+1.42\\
	& Pace & 140 &		+5.13	&  141 &		+0.56\\
	& Nodes & 147 &	+1.03	&  81 &		+0.37\\
	& L-Time & 147 &		+314.89	& 81 &		+8.43\\
\hline
\multirow{6}{*}{\solver{Cbc}} 
	& Unsolved & 210 &		+1.06	& 137 &		+1.77\\
	& Gap & 87 &	+3.31	&  109 &		+15.17\\
	& Time & 62 &		+5.66	&  24 &		+3.50\\
	& Pace & 156 &		+5.19	&  137 &		+0.56\\
	& Nodes & 115 &		\textbf{$-$3.15}	&  22 &		\textbf{$-$0.11}\\
	& L-Time & 115 &		+8.23	&22 &	+0.98\\
\hline
\multirow{6}{*}{\solver{Cplex}} 
	& Unsolved & 211 &		\phantom{$+$}0.00	& 141 &	\textbf{$-$2.94}\\
	& Gap & 58 &		+5.39	&  62 &		+4.39\\
	& Time & 80 &		\textbf{$-$8.21}	&  75 &		\textbf{$-$3.48}\\
	& Pace & 146 &		\textbf{$-$4.09}	&  141 &		\textbf{$-$2.14}\\
	& Nodes & 145 &		\textbf{$-$4.84}	&  73 &		\textbf{$-$1.99}\\
	& L-Time & 145 &		\textbf{$-$2.87}	&73 &		+0.87\\
\hline
\multirow{6}{*}{\solver{Xpress}} 
	& Unsolved & 212 &		\phantom{$+$}0.00	& 141 &		\phantom{$+$}0.00\\
	& Gap & 72 &		+0.64	&  74 &		+1.41\\
	& Time & 85 &	+13.65	&  61 &	+1.14\\
	& Pace & 164 &		+12.88	&  141 &		+1.52\\
	& Nodes & 131 &		+8.35	&  61 &	+0.70\\
	& L-Time & 131 &		+4.96	&  61 &	+2.56\\
\hline 
\end{tabular}
\caption{Results with bound tightening in mixed-integer instances.}
\label{tab:withbtmixedinteger}
\end{table}

\subsubsection{Densities for Nodes and L-Time. Results for \solver{Gurobi} solver}

\begin{figure}[H]
    \centering
    \begin{subfigure}{0.4\textwidth}
        \includegraphics[trim=0 0 0 28, clip, width=\textwidth]{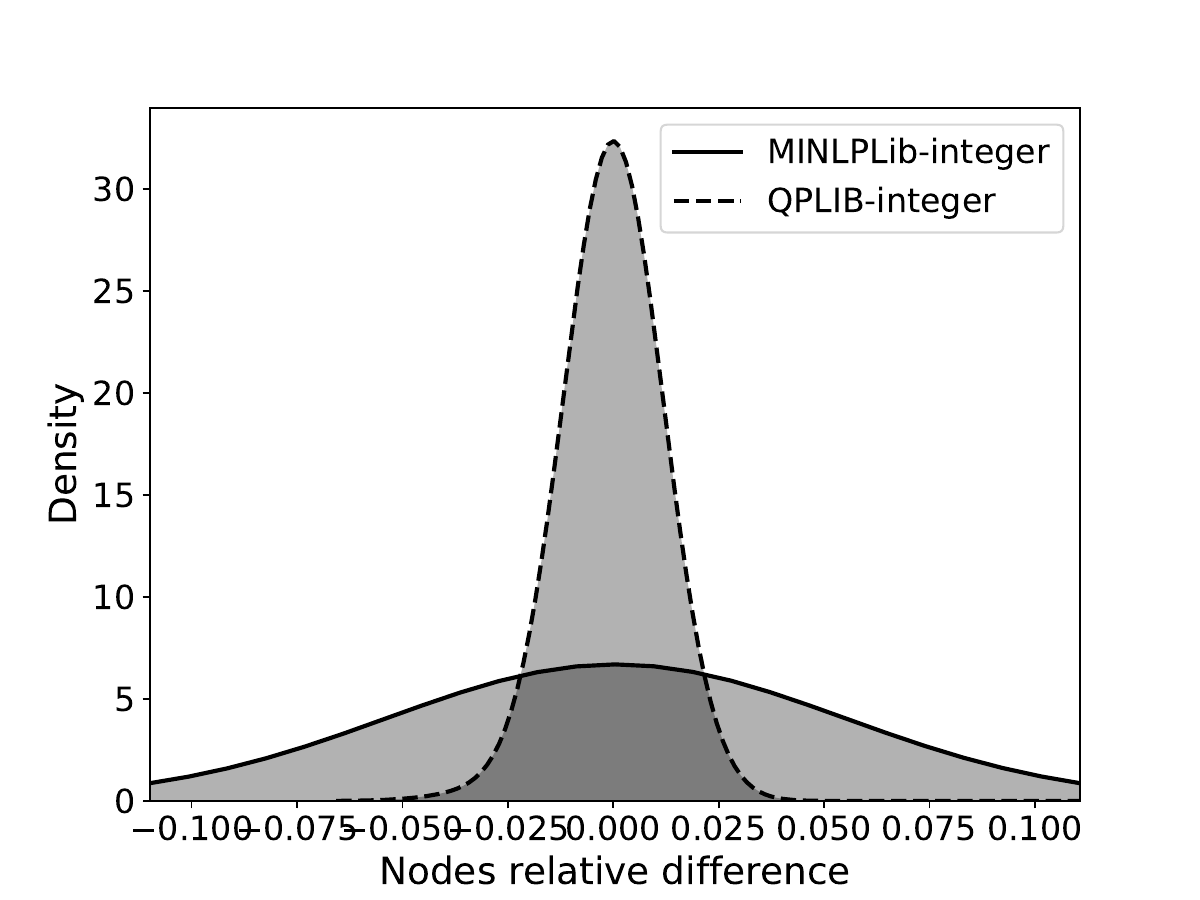}
        \caption{Nodes, \solver{Gurobi}.}
    \end{subfigure}    
    \begin{subfigure}{0.4\textwidth}
        \includegraphics[trim=0 0 0 28, clip, width=\textwidth]{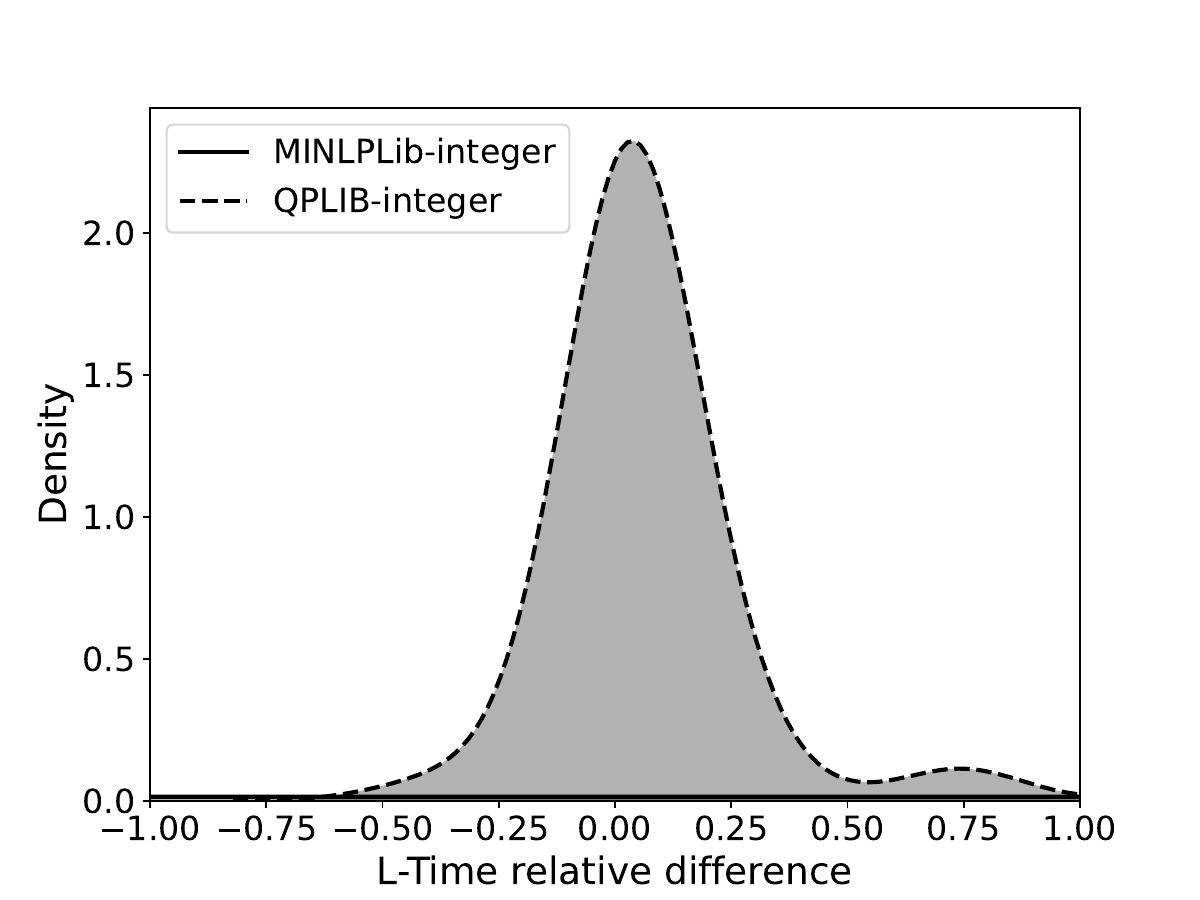}
        \caption{L-Time, \solver{Gurobi}.}
    \end{subfigure}    
    \caption{Relative differences in Nodes and L-Time between \RLTlooseB and \RLTtightB with bound tightening in mixed-integer instances (\solver{Gurobi} solver).}
\end{figure}

\subsubsection{Densities for Nodes and L-Time. Results for the rest of the solvers}

\begin{figure}[H]
    \centering
    \begin{subfigure}{0.4\textwidth}
        \includegraphics[trim=0 0 0 28, clip, width=\textwidth]{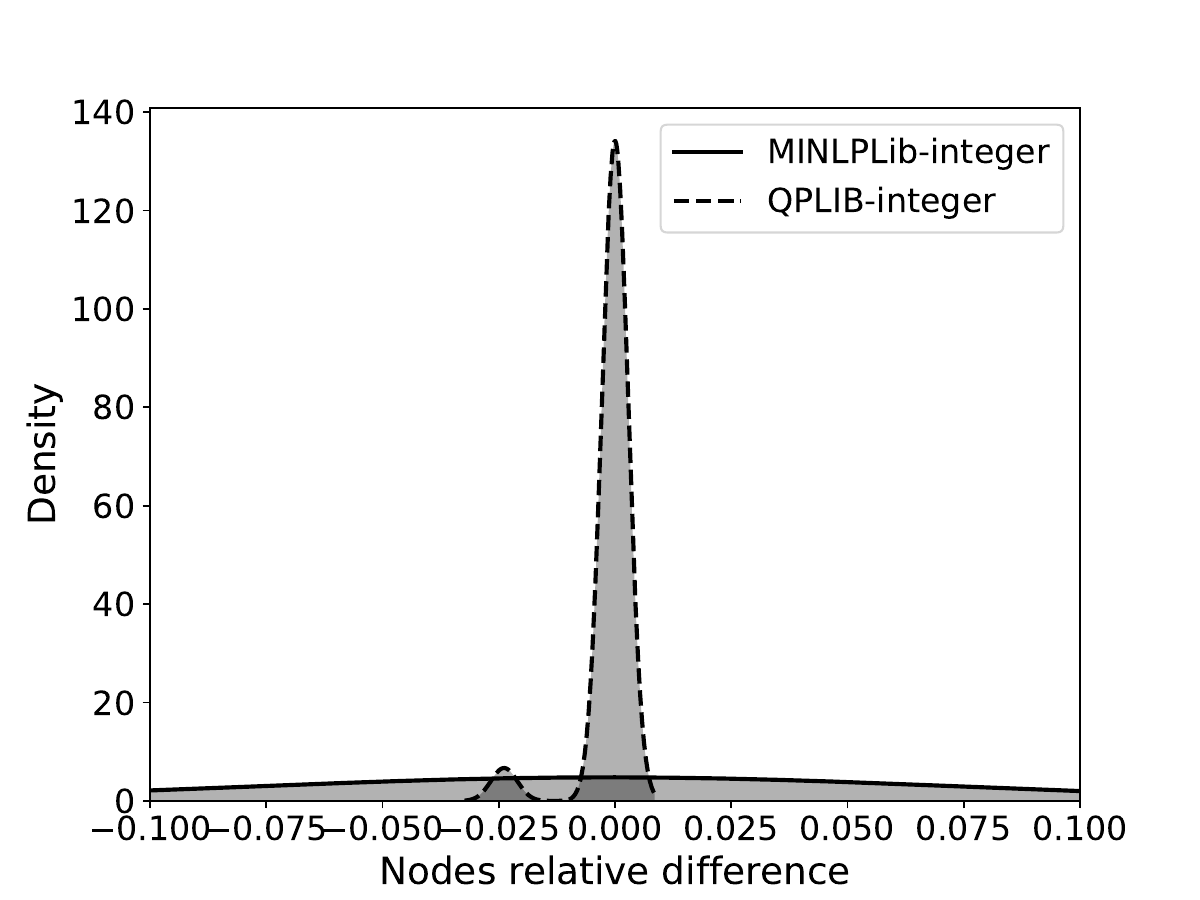}
        \caption{Nodes, \solver{Cbc}.}
    \end{subfigure}    
    \begin{subfigure}{0.4\textwidth}
        \includegraphics[trim=0 0 0 28, clip, width=\textwidth]{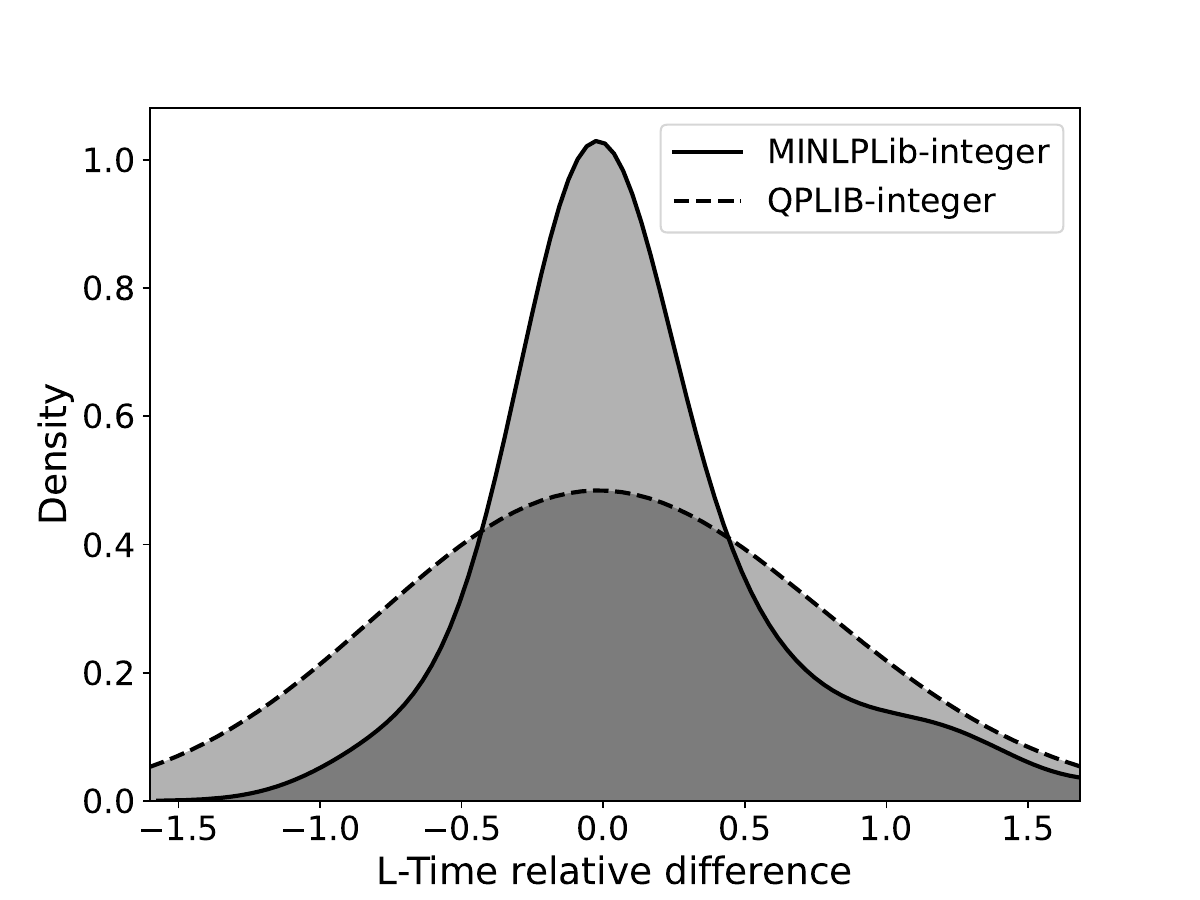}
        \caption{L-Time, \solver{Cbc}.}
    \end{subfigure}        
    \begin{subfigure}{0.4\textwidth}
        \includegraphics[trim=0 0 0 28, clip, width=\textwidth]{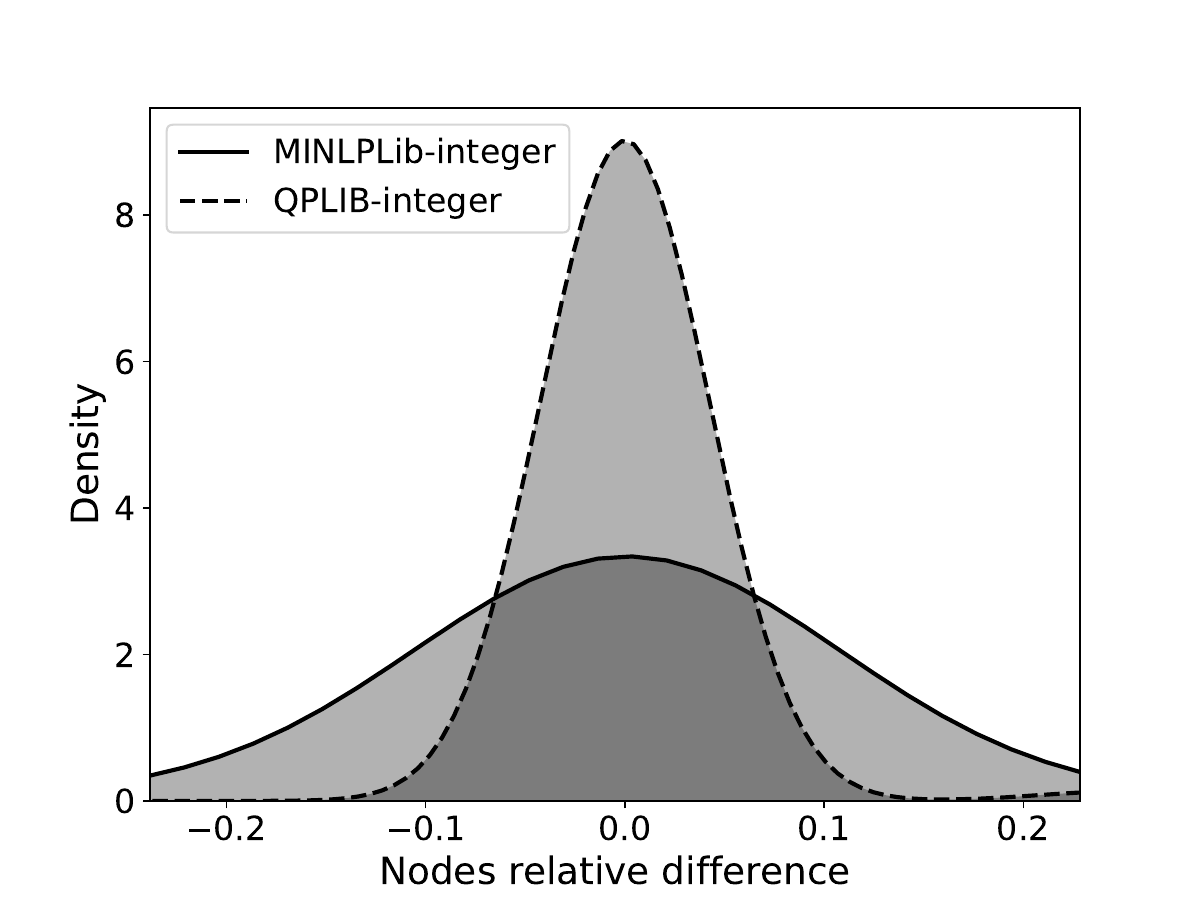}
        \caption{Nodes, \solver{Cplex}.}
    \end{subfigure}    
    \begin{subfigure}{0.4\textwidth}
        \includegraphics[trim=0 0 0 28, clip, width=\textwidth]{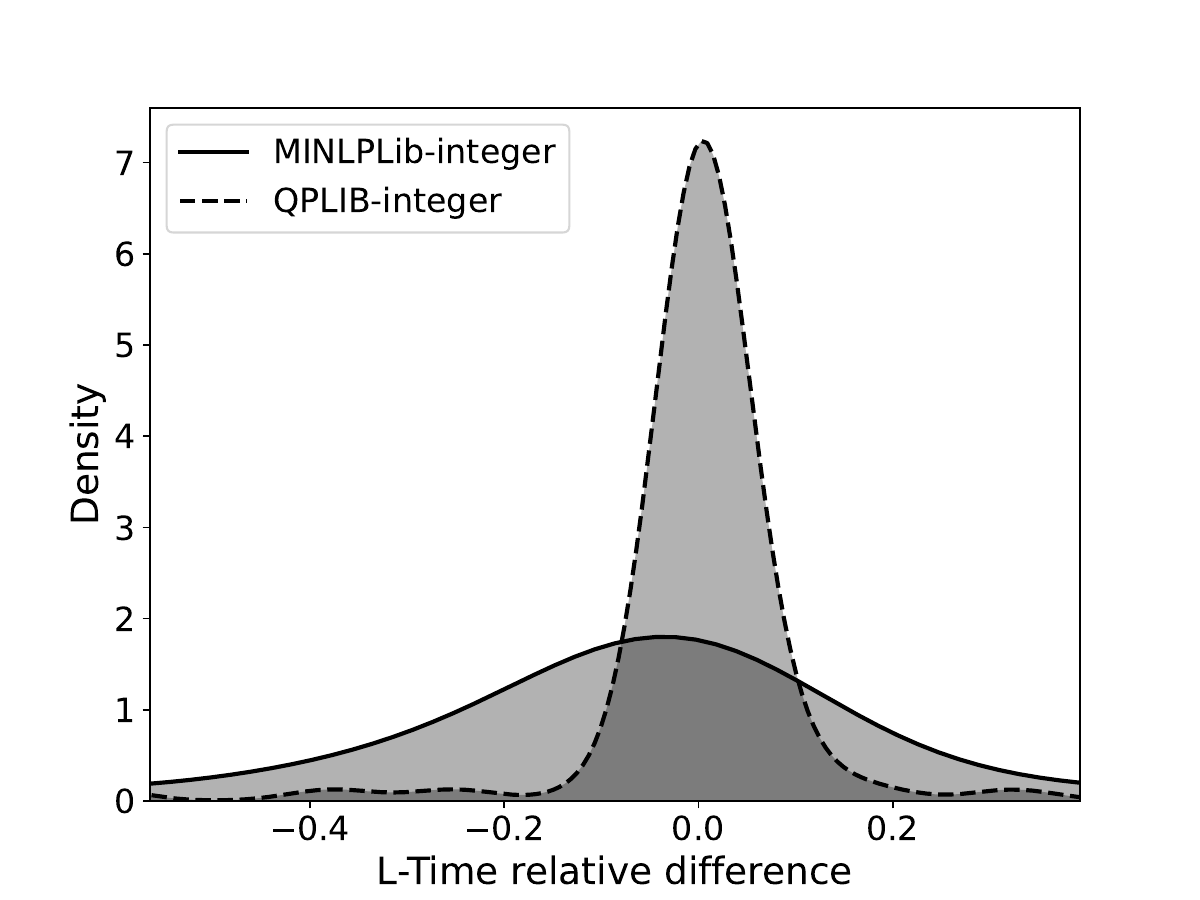}
        \caption{L-Time, \solver{Cplex}.}
    \end{subfigure}       
    \begin{subfigure}{0.4\textwidth}
        \includegraphics[trim=0 0 0 28, clip, width=\textwidth]{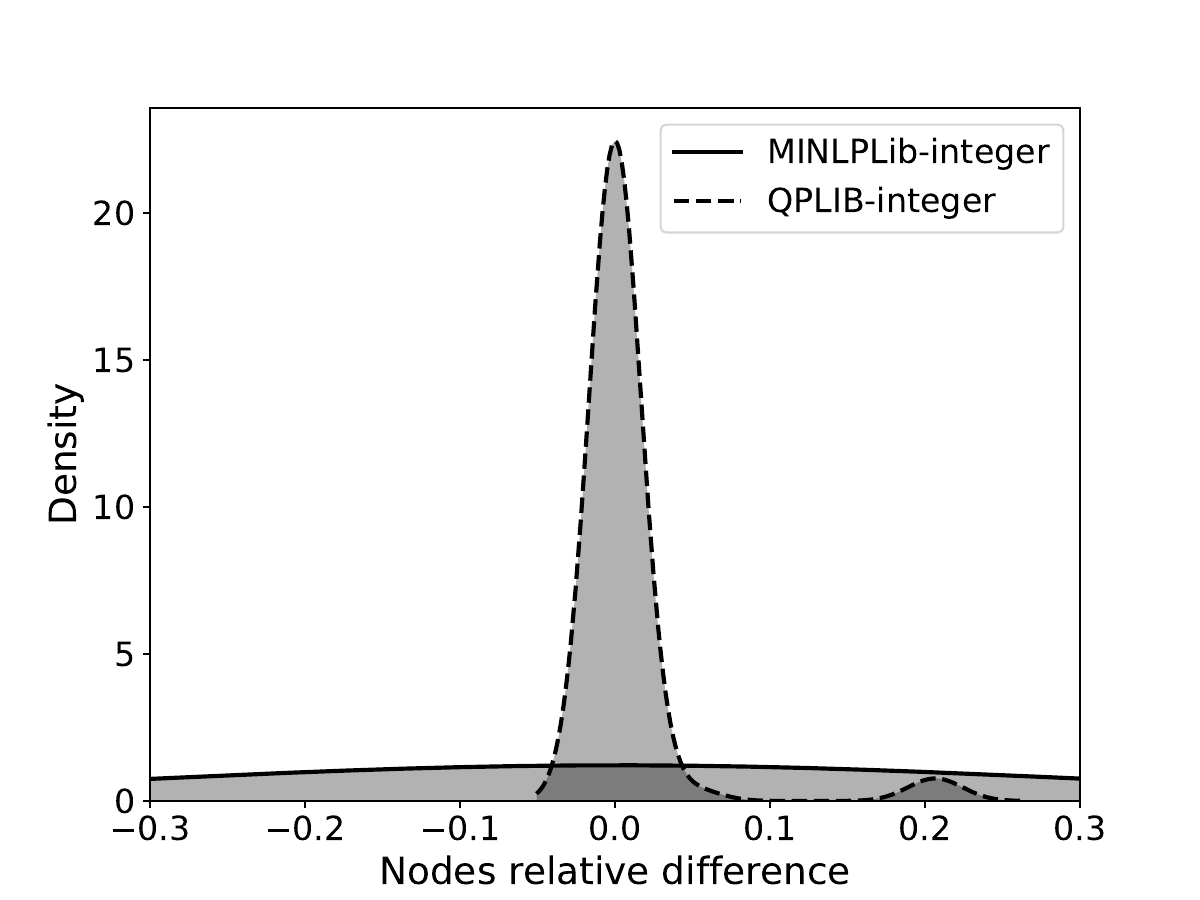}
        \caption{Nodes, \solver{Xpress}.}
    \end{subfigure}    
    \begin{subfigure}{0.4\textwidth}
        \includegraphics[trim=0 0 0 28, clip, width=\textwidth]{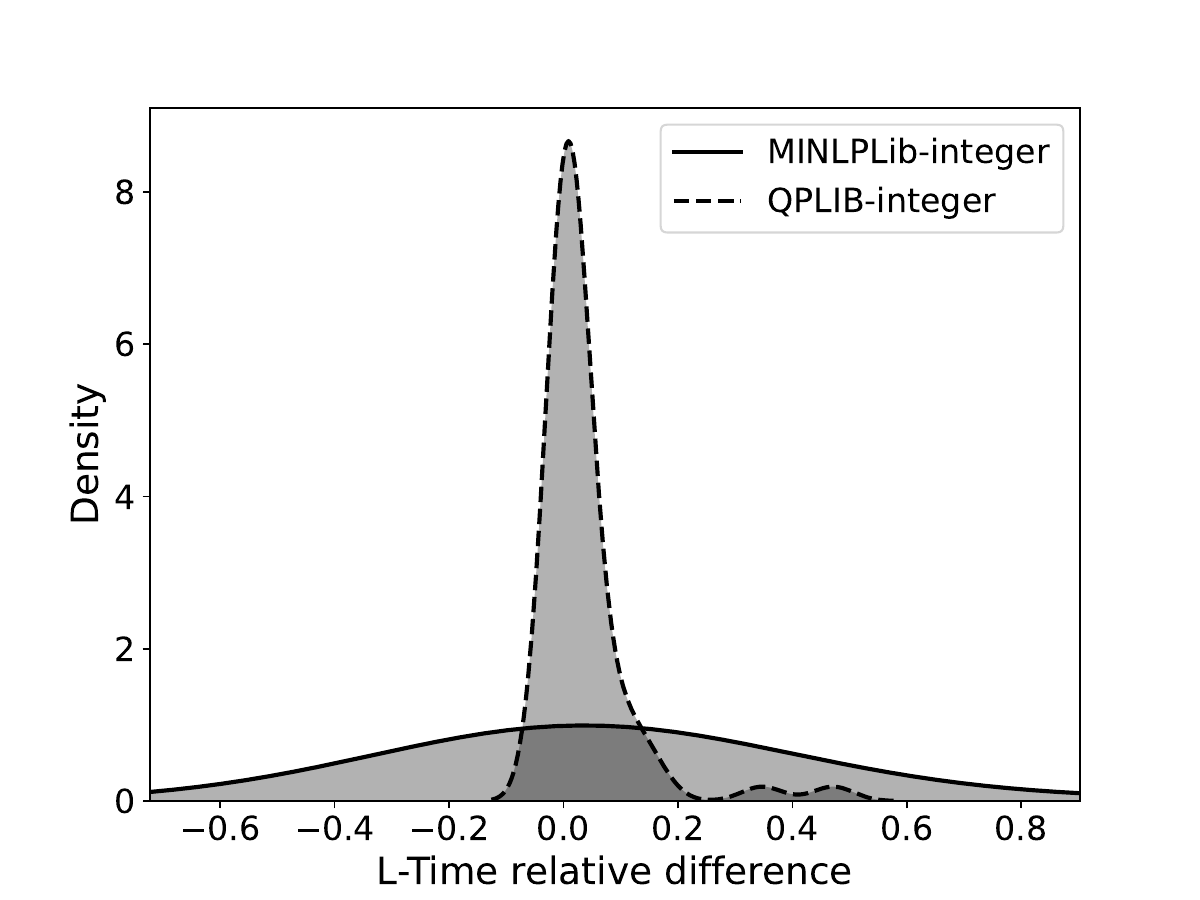}
        \caption{L-Time, \solver{Xpress}.}
    \end{subfigure}
    
    \caption{Relative differences in Nodes and L-Time between \RLTlooseB and \RLTtightB with bound tightening in mixed-integer instances (solvers other than \solver{Gurobi}).}
\end{figure}

\end{document}